\documentclass[12pt,a4paper]{article}
\usepackage{amsfonts}
\usepackage{amsmath}
\usepackage{amsthm}
\usepackage{amssymb}
\usepackage{mathrsfs}
\usepackage[numbers]{natbib}
\usepackage[fit]{truncate}
\usepackage{fullpage}
\usepackage{hyperref}
\usepackage{tikz}
\usepackage{microtype}
\usepackage{mathtools}
\usepackage{enumitem}
\mathtoolsset{showonlyrefs}

\usepackage{titlesec}
\titlelabel{\thetitle.\;} 


\setlength{\parskip}{2pt}%
\setlength{\parindent}{0.17in}%

\theoremstyle{plain}
\newtheorem{theorem}{Theorem}[section]
\newtheorem{corollary}[theorem]{Corollary}
\newtheorem{lemma}[theorem]{Lemma}

\newtheorem{proposition}[theorem]{Proposition}

\newtheorem{condition}[theorem]{Condition}
\newtheorem{conjecture}[theorem]{Conjecture}

\theoremstyle{definition}

\newtheorem{remark}[theorem]{Remark}


\usepackage{bbm}

\newcommand{\as}[0]{a.s.}
\newcommand{\ie}[0]{i.e.}

\newcommand{\ra}[0]{ \rightarrow }
\newcommand{\lra}[0]{ \longrightarrow }

\DeclareMathOperator{\Esp}{E}
\DeclareMathOperator{\Prob}{P}

\DeclareMathOperator{\Qrob}{Q}
\DeclareMathOperator{\II}{\mathbb{I}}
\DeclareMathOperator{\IR}{\mathbb{R}}
\DeclareMathOperator{\IN}{\mathbb{N}}

\DeclareMathOperator{\bF}{\mathcal{F}}
\DeclareMathOperator{\bP}{\mathcal{P}}
\DeclareMathOperator{\bE}{\mathcal{E}}
\DeclareMathOperator{\bN}{\mathcal{N}}

\DeclareMathOperator{\erfc}{erfc}

\DeclareMathOperator{\dom}{dom}
\DeclareMathOperator{\Law}{Law}

\DeclareMathOperator{\Lop}{L}

\DeclareMathOperator*{\sgn}{sgn}

\DeclareMathOperator*{\supp}{supp}
\DeclareMathOperator*{\Lip}{Lip}

\DeclareMathOperator*{\MC}{MC}
\DeclareMathOperator*{\acc}{acc}
\DeclareMathOperator*{\rej}{rej}

\newcommand{\rd}{\mathrm{d}}
\newcommand{\vd}{\,\mathrm{d}}
\renewcommand{\d}[1]{\ensuremath{\operatorname{d}\!{#1}}}

\newcommand{\process}[1]{(#1)_{t\ge 0}}

\newcommand{\indic}[1]{\mathbbm{1}_{#1}}
\newcommand{\bg}{{\bf \underline g}}

\newcommand{\loct}[3]{L^{#2}_{#3}(#1)} 
 
\newcommand{\occt}[3]{A^{#2}_{#3}(#1)} 

\newcommand{\hfprocesstrue}[2]{{#1}_{\frac{{#2}}{n}}}
\newcommand{\hfprocess}[3]{{#1}^{#2}_{\frac{{#3}}{n}}}

\newcommand{\sqn}[0]{ \sqrt{n}} 
\newcommand{\sqt}[0]{ \sqrt{t}} 
\newcommand{\un}[0]{ u_n}

\newcommand{\wtB}[0]{ \widetilde{B}} 
\newcommand{\wtX}[0]{ \widetilde{X}}

\newcommand{\braces}[1]{ ( #1 ) } 
\newcommand{\bigbraces}[1]{ \big( #1 \big) } 
\newcommand{\Bigbraces}[1]{ \Big( #1 \Big) } 
\newcommand{\biggbraces}[1]{ \bigg( #1 \bigg) } 

\newcommand{\sqbraces}[1]{ [1 ] } 
 
\newcommand{\Bigsqbraces}[1]{ \Big[ #1 \Big] } 
\newcommand{\biggsqbraces}[1]{ \bigg[ #1 \bigg] }

\newcommand{\bigcubraces}[1]{ \big\{ #1 \big\}} 
\newcommand{\Bigcubraces}[1]{ \Big\{ #1 \Big\}}

\newcommand{\absbraces}[1]{ | #1  |} 
\newcommand{\bigabsbraces}[1]{ \big| #1 \big|} 
\newcommand{\Bigabsbraces}[1]{ \Big| #1 \Big|}

\newcommand{\norm}[1]{ \| #1  \|}

\newcommand{\qvsingle}[1]{ \langle #1 \rangle }

\newcommand{\Bigqvsingle}[1]{\Big \langle #1 \Big \rangle }

\newcommand{\convergence}[1]{\xrightarrow[n\rightarrow \infty]{#1}}

\newcommand{\Top}[2]{ T_{#1}[#2] }

\newcommand{\Plimsup}[2]{ #1\!\text{-}\!\limsup_{#2} }
\newcommand{\Pliminf}[2]{ #1\!\text{-}\!\liminf_{#2} }

\newcommand{\na}[0]{\un }
\newcommand{\nta}[0]{u^{2}_{n} }



\usepackage{authblk}

\setlength{\affilsep}{1em}   

\newcommand{\email}[1]{\href{mailto:{#1}}{{#1}}}

\title{Functional convergence to the local time\\ of a sticky diffusion}

\author[1]{Alexis Anagnostakis\thanks{\email{alexis.anagnostakis@univ-lorraine.fr}}}
\affil[1]{Université de Lorraine, CNRS, Inria, IECL, F-54000 Nancy, France}

\date{}

\begin{document}

	\maketitle
	
	\vspace*{-1cm}
	
	\begin{abstract}
		We establish the consistency of a local time approximation of a diffusion at a sticky threshold based on high-frequency observations. 
		First, we prove the result for sticky Brownian motion, and then extend it to Itô diffusions with a sticky point (SID). 
		For this, we derive the pathwise formulation of an SID along with respective versions of key stochastic calculus results (Itô formula, Girsanov theorem). Based on the local time approximation, we develop a consistent estimator for the stickiness parameter. We conclude with numerical experiments and assess  statistical properties of the stickiness estimator and the local time approximation.
	\end{abstract}
	
	\renewcommand{\thefootnote}{}
	
	\footnotetext{ 
		\textit{This version:} \date{\today} (post--publication) \\
		\textit{Keywords and phrases:} 
		local time, 
		occupation time, 
		stickiness parameter estimation, 
		statistic, 
		high frequency, 
		sticky process, 
		sticky Brownian motion, 
		singular diffusion, 
		semimartingale\\
		\textit{Mathematics Subject Classification 2020:} 
		 Primary 60F17, 60J55; 
		 Secondary 60J60
	}
	
	\renewcommand{\thefootnote}{\arabic{footnote}}

	\section{Introduction and main results}

The last decades we are witnessing the appearance of local time approximations and their applications in statistical estimation problems.
In \cite{Jac98}, it is proven that for an Itô diffusion $X$, $\ell \in \IR $, $g$ an integrable function and
$(\un)_n $ a sequence of $(0,\infty) $ such that
\begin{equation}\label{eq_un_condition}
	\begin{aligned}
		\lim_{n\ra \infty} \un/n &= 0,
		&		
		\lim_{n\ra \infty} \un &= \infty,
	\end{aligned}
\end{equation}
then the high-frequency statistics 
\begin{equation}\label{eq_functional_form}
	\begin{aligned}
	&\frac{\un}{n}\sum_{i=1}^{[nt]} g(\un \hfprocess{X}{}{i-1}  -\ell), 
	& n&\in \IN,
	\end{aligned}
\end{equation}
converges in probability to $\lambda(g)\loct{X}{\ell}{t} $, as $n\lra \infty $, where $\loct{X}{\ell}{}$ is the local time of $X$ at $\ell $ and $\lambda(g) = \int_{\IR}g(x)\vd x $.
Since then, similar results were proven in the case of the skew and the oscillating Brownian motions \cite{Lejay2014,Maz19}, stable processes \cite{Jeganathan2004} and the fractional Brownian motion \cite{Jaramillo2021,Podolskij2017}.
These gave rise to the usage of local time statistics \cite{Lejay2014,Lejay2019,Lejay2018}, \ie{}  statistics based on these approximations. 
In this paper we prove that under additional assumptions on $((\un)_n, g)$, the same result holds for Itô diffusions with a sticky point at $0$ at $\ell=0 $.
We then define a local time statistic that we prove to be a consistent estimator of the stickiness parameter.

Introduced by Feller in \cite{Fel52}, one-dimensional sticky diffusions are continuous processes that satisfy the strong Markov property and spend positive amount of time at some points of their state-space $\II $.
These points, called sticky points, can be located either in the interior or at an attainable  boundary of $\II $ (sticky reflection).
A diffusion has a sticky point at $0 $ iff its speed measure $m$ has an atom at $0$, \ie{} $m(\{0\})>0 $.
The mass of that atom $\rho = m(\{0\}) $ is called stickiness parameter, which dictates how much time the process spends at $0$ and there are no references known to us for its estimation.

Sticky processes have been recently used to model phenomena in finance, biology, quantum and classical mechanics.
In particular, they can be used to describe the behavior of interest rates near zero~\cite{KabKijRin,Nie2}, the behavior of molecules near a membrane~\cite{Gra95}, the concentration of pathogens in a healthy individual \cite{CalFar}, the dynamics of mesoscale particles upon contact in colloids \cite{KalHol,Stell91} and the motion of quantum particles when they reach a source of emission \cite{DavTru}. 
From a theoretical standpoint, they are used to create new types of probabilistic couplings \cite{EbeZim} and appear as the limit of storage processes \cite{HarLem}.
Many papers have appeared recently that address the numerical challenges of simulating sticky diffusions \cite{Ami,anagnostakis2023general,AnkKruUru3,BouRabee2020,MeoLiGon}.

The scope of this paper are sticky Itô diffusions or SID, \ie{} diffusions that have a homogeneous SDE dynamic away from a point, where they exhibit stickiness. 
For simplicity we suppose the point of stickiness to be located at $0$.
We will call $ (\rho,\mu,\sigma)$-SID the sticky Itô diffusion that solves the SDE of drift $\mu $ and volatility $\sigma $ and has stickiness $\rho>0 $ at $0$.

The most elementary sticky Itô diffusion is the $(\rho,0,1) $-SID,  called the sticky Brownian motion (see \cite{EngPes}). 
It is the process that has a Brownian dynamic away from $0$ and a sticky point at $0$ of stickiness $\rho $.

We suppose that $(\mu,\sigma)$ satisfy the following conditions.
The first guarantees the uniqueness in law, the second prevents any oscillating phenomena at $0$, the last  allows the usage of Girsanov theorem in Section \ref{ssec_proof_of_thm12a}.
\begin{condition}\label{cond_drift_volatility_functions}
	Let $B$ a standard Brownian motion defined on a probability space
	$(\Omega, \process{\bF_t},\Prob_x) $.
	We consider the following SDE:
	\begin{equation}\label{eq_classic_SDE_formulation}
		\vd X_t = \mu (X_t) \vd t + \sigma(X_t) \vd B_t,
	\end{equation}
	and $(\sigma,\mu) $ such that
	\begin{enumerate}
		\item uniqueness in law holds for \eqref{eq_classic_SDE_formulation}, \ie{} it  defines an Itô diffusion,
		
		\item $\sigma$ is positive and $C^{1}(\II) $,
		
		\item if $X$ solves \eqref{eq_classic_SDE_formulation}, $\Prob_x(X_0 = x)=1 $ and $\theta = \process{ \sigma'(X_t) - \frac{\mu(X_t)}{\sigma(X_t)}}  $, then for all $x\in \II $, 
		\begin{equation}
			\Esp_x \Bigsqbraces{\exp \Bigbraces{\int_{0}^{t} \theta_s\d B_s - \frac{1}{2} \int_{0}^{t} \theta^2_s\d s}} = 1. 
		\end{equation}
	\end{enumerate}
\end{condition}

A sticky Itô diffusion $X$ is a continuous semimartingale (see Corollary \ref{cor_SID_semimartingale}). 
As such, if defined on a probability space $ (\Omega,\process{\bF_t},\Prob_x)$, its  local time at $\ell$ can either be defined (see \cite[Section~VI.1]{RevYor}) 
\begin{enumerate}
	\item  as the continuous, strictly-increasing process $\loct{X}{\ell}{} $
	such that for all $t\ge 0 $,
	\begin{equation}\label{eq_local_time_first_definition}
		|X_t - \ell| - |X_0 - \ell| 
		= \int_{0}^{t} \sgn(X_s - \ell) dX_s + \loct{X}{\ell}{t},
	\end{equation}
	where $\sgn(x) = \indic{x \ge 0} -\indic{x < 0} $,
	\item if $\sigma $ is continuous at $\ell $, for every $t\ge 0 $, as the almost sure limit
	\begin{equation}\label{eq_local_time_alternative_definition}
		\begin{aligned}
		\Prob\text{-}\as, &
		& \loct{X}{\ell}{t} &= \lim_{\epsilon\ra 0} \frac{1}{\epsilon} \int_{0}^{t} \indic{0 \le X_s - \ell <\epsilon} \vd \qvsingle{X}_s
		= \lim_{\epsilon\ra 0} \frac{1}{2\epsilon} \int_{0}^{t} \indic{| X_s - \ell | <\epsilon} \vd \qvsingle{X}_s,
		\end{aligned}
	\end{equation}
	If $\sigma(\ell-)\not =  \sigma(\ell+)$, the limits in \eqref{eq_local_time_alternative_definition} are not equal.
	In this case, the notions of right-,left- and symmetric-local time appear.
	The fact that $\ell $ is not a skew, nor an oscillation threshold for $X$ prevents this from happening.
\end{enumerate}

If $X$ is a diffusion on natural scale, its local time can also be defined as follows (see \cite[p.174, p.183]{ItoMcKean96}).
The local time of $X$ 
is the random field $\{\loct{X}{y}{t};t\ge 0,y\in \II \} $ such that
\begin{equation}\label{eq_intro_diffusion_occtimes_formula}
	\begin{aligned}
		&\Prob\text{-}\as,
		&\int_{0}^{t}\indic{X_s \in A}\vd s &= \frac{1}{2}\int_{A}\loct{X}{y}{t} m(\rd y),
		& \forall A&\in \mathcal B( \II),
		& \forall &t\ge 0,
	\end{aligned}
\end{equation}
where $m$ is the speed measure of $X$.

The object of this paper is to prove necessary conditions for the statistic \eqref{eq_functional_form} of a sticky Itô diffusions $X$ to converge to its local time.

In particular, we prove the following results:

\begin{theorem}\label{thm_main_result_stqSDE_intro}
	Let $X^{\rho}$  be a $(\rho,\mu,\sigma)$-SID, with state space $\II $,  defined on a family of probability spaces $(\Omega, \process{\bF_t},\Prob_x)_{x\in \II}$ such that for all $x\in \II $,
	$\Prob_x (X_0 = x) = 1 $ and
	$(\mu,\sigma) $ satisfy Condition~\ref{cond_drift_volatility_functions}.
	Let $g:\IR \mapsto \IR$ be a bounded Lebesgue-integrable function which vanishes on an open interval around $0$ and $(\un^{2})_n $ a sequence that satisfies \eqref{eq_un_condition}. 
	Then, 
	\begin{equation}\label{eq_auxiliary_main_result_sde}
		\frac{\un}{n} \sum_{i=1}^{[nt]} g(\un\hfprocess{X}{\rho}{i-1} )
		\convergence{} 
		\frac{\lambda(g)}{\sigma(0)} \loct{X^{\rho}}{0}{t}.
	\end{equation}
	locally uniformly in time, in 
	$\Prob_x $-probability.
\end{theorem}

\begin{corollary}[of Theorem~\ref{thm_main_result_stqSDE_intro} and Lemma~\ref{prop_occupation_time_conv}]\label{cor_stickiness_estimator_consistency}
	In the setting of Theorem~\ref{thm_main_result_stqSDE_intro}, we consider the statistic
	\begin{equation}\label{eq_statistic_stickiness}
		\widehat{\rho}_{n}(X) := 2 \frac{\lambda(g)}{\sigma(0)} \frac{1}{\un} \frac{\sum_{i=1}^{[nt]} \indic{\hfprocess{X}{}{i-1}=0}}{
			\sum_{i=1}^{[nt]} g(\un \hfprocesstrue{X}{i-1}) }.
	\end{equation}
	If $g $ is a  bounded integrable function which vanishes on an open interval around $0$ and $(\un^{2})_n $ a sequence that satisfies \eqref{eq_un_condition}, then
	$\widehat{\rho}_{n}(X) $
	is a, conditionally on $\mathcal L := \{\tau_0<t\} $, consistent estimator of $\rho $ as $n \lra \infty $, \ie{} if 
	$P_{x}^{\mathcal L}(.)= P(.| X_{0}=x, \mathcal L)$, then 
	$\widehat{\rho}_{n}(X)
	\lra \rho $ in $ P_{x}^{\mathcal L}$-probability as 
	$n\lra \infty $. 
\end{corollary}

The first result is the local time approximation.
The second is the consistency of a stickiness parameter estimator built upon the local time approximation.
The latter corresponds to the estimating function (see \cite{JacSor}) defined at stage $n$, for all $\rho>0$ by
\begin{equation}
	f_n(X,\rho) := \frac{\rho}{2}
	\Bigbraces{ \frac{\sigma(0)}{\lambda(g)}\frac{\un}{n} \sum_{i=1}^{[nt]} g(\un \hfprocesstrue{X}{i-1})}
	-  \frac{1}{n} \sum_{i=1}^{[nt]} \indic{\hfprocesstrue{X}{i-1} = 0}.
\end{equation}

\begin{remark}
	Both Theorem~\ref{thm_main_result_stqSDE_intro} and Corollary \ref{prop_occupation_time_conv} hold for $\un = n^{\alpha} $ with $\alpha\in (0,1/2) $
	as well as for $\un = \log (n) $.
\end{remark}

We prove these results first, for the sticky Brownian motion and then by extension, to sticky Itô diffusions.
The extension is done with stochastic calculus arguments which require a path-wise formulation of these processes.
While this paper was under review it came to the author's knowledge that the path-wise formulation of sticky Itô diffusions was treated in \cite{Sal2017}. 
In Section \ref{sec_thm2}, we provide another proof  that uses the speed measure and scale function characterization instead of the infinitesimal generator.
Thus, we avoid direct usage of the Hille-Yosida theorem.

The path-wise formulation of sticky Itô diffusions generalizes results in \cite{EngPes} for the sticky Brownian motion and \cite{Nie2} for the Ornstein-Uhlenbeck process with sticky reflection. 
The proofs in \cite{EngPes,Nie2} are based on the existence of a strong solution of the non-delayed process ($Y$ in the proof of
Theorem~\ref{prop_stichy_sde_solution_diffusion}).
Our proof, as the one in \cite{Sal2017} does not require this assumption.

\paragraph{Outline.}
In Section~\ref{sec_preliminary_results}, we prove several properties of the sticky Brownian motion, its local time and its semi-group.
In Section~\ref{sec_thm1b}, we prove Theorem~\ref{thm_auxiliary_main_result_sbm} which is  Theorem~\ref{thm_main_result_stqSDE_intro} for the sticky Brownian motion.
In Section~\ref{sec_thm2}, we give an analytical definition of sticky Itô diffusions, prove their path-wise formulation and two path-wise results (explicit versions for SID of Itô's formula and Girsanov theorem).
In section \ref{sec_proofs}, we prove Theorem~\ref{thm_main_result_stqSDE_intro} and Corollary \ref{cor_stickiness_estimator_consistency}.
Section~\ref{sec_num_exp} is dedicated to numerical experiments.

\paragraph{Acknowledgments.} 
The author thanks Antoine Lejay and Denis Villemonais for their supervision and mentoring along with Sara Mazzonetto for insightful discussions and advice on the subject. 
The PhD thesis of A. Anagostakis is supported by a scholarship from the Grand-Est Region (France).

\paragraph{Post-publication Addendum.}

The singular part of the speed measure, as given in equation~\eqref{eq_dm}, has been corrected from the originally published version.
The author thanks David Criens and his student, Sebastian Hahn, for bringing this error to his attention.

In addition, we have now correctly presented the occupation times formula for a diffusion on natural scale, following~\cite[p.174, p.183]{ItoMcKean96}, in equation~\eqref{eq_intro_diffusion_occtimes_formula}.
In the originally published version, this formula was mistakenly applied to all regular diffusions, following~\cite[\S II.2.13]{BorSal}. This led to the error in the singular part of the speed measure.

\section{Some preliminary results}\label{sec_preliminary_results}
In this section we prove several properties of the sticky Brownian motion that will be used in the latter sections. 
The first property is the time-scaling of the sticky Brownian motion which is the analogue of~\cite[Proposition I.1.10(iii)]{RevYor} for the standard Brownian motion.
Then, we prove a characterization of the local time of the sticky Brownian motion. 
Finally, we prove that  the transition kernel of the sticky Brownian motion is bounded by the transition kernel of the  non-sticky one.
The transition kernel of a diffusion process $X$ with speed measure $m$ is the  positive measurable function $p: \IR_{+}\times \II^{2} \mapsto \IR_{+} $ (see  \cite[\S II.1.4]{BorSal} and \cite[p.149]{ItoMcKean96}) such that for every bounded measurable function $h $,
\begin{equation}\label{eq_ptk_definition}
	\Esp_x\bigbraces{h(X_t)} = \int_{\IR} h(y)p(t,x,y) m(\rd y),
\end{equation}
where $m$ is the speed measure of $X$ (see \eqref{eq_sticky_Brownian_motion_characterization}).
The left hand side of \eqref{eq_ptk_definition} defines a family of operators indexed by time $\process{P_t} $ defined for every $h\in L^{\infty}(\IR)$ by
\begin{equation}\label{eq_semi_group_definition}
	P_t h(x) = \Esp_x\bigbraces{h(X_t)}.
\end{equation}
It turns out that $\process{P_t} $ is a family of linear operators over $C_b(\II) $ that characterize the law of $X$ (see Chapter 3.1 of \cite{RogWilV2}) and satisfy the semi-group property with respect to $t$, \ie{} $P_{t+s} = P_t P_s  $.
We thus call $\process{P_t} $ the semi-group of the diffusion $X$.

In our proofs we will use results on semimartingales.
The first one is the characterization of diffusions on natural scale in terms of a time-changed Brownian motion.

\begin{theorem}[\cite{RogWilV2}, Theorem~V.47.1]\label{thm_diffusion_as_time_changed_brownian_motion}
	Let $X = \process{X_t} $ be a diffusion on natural scale with speed measure $m$ defined on a probability space $\bP_x = (\Omega, \bF, \Prob_x)$ such that $\Prob_x(X_0 = x) =1 $. 
	Then, there exists a Brownian motion $B = \process{B_t} $ defined on an extension of $\bP_x $ such that for every $t\ge 0 $,
	\begin{equation}\label{eq_martingale_diffusion_characterization}
		X_t = B_{\gamma(t)},
	\end{equation}
	where $\gamma $ is the right inverse of $A(t) := \frac{1}{2}\int_{\IR} \loct{B}{y}{t} m(\rd y) $ and for all $y\in \IR $, $\loct{B}{y}{} $ is the local time of $B$ at $y$.
\end{theorem}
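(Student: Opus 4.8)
The plan is to read the statement off the Dambis--Dubins--Schwarz (DDS) theorem and then \emph{identify the time change} it produces with the right inverse $\gamma$ of $A$. Since a diffusion on natural scale is a continuous local martingale, DDS furnishes a Brownian motion $B$, on an extension of $\bP$, with $X_t=B_{\qvsingle{X}_t}$; the whole point is then to show that the clock $t\mapsto\qvsingle{X}_t$ is exactly $\gamma$, and I would extract this from the single identity $\tfrac12\int_{\IR}\loct{X}{a}{t}\,m(\d a)=t$, which is precisely the statement that $m$ is the speed measure of $X$.

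\emph{Local martingale property, DDS, and rewriting $A$ along the clock.} Because the scale function is $s(x)=x$, \eqref{eq_def_scale_function} gives $\Prob_x(\tau_b<\tau_a)=(x-a)/(b-a)$ for $x\in(a,b)\subset\II$, hence $\Esp_x[X_{\tau_{ab}}]=x$ for the exit time $\tau_{ab}$ of $(a,b)$, and by the strong Markov property $(X_{t\wedge\tau_{ab}})_{t\ge0}$ is a bounded martingale; letting $(a,b)\uparrow\II$ and using non-explosion shows $X$ is a continuous local martingale. When $\II=\IR$ the diffusion is recurrent, $\qvsingle{X}_\infty=\infty$ a.s., and DDS (see \cite{RevYor}) applies directly to give $B$ with $X_t=B_{\qvsingle{X}_t}$; for a subinterval $\II$ or a transient diffusion one uses the version of DDS on the random interval $[0,\qvsingle{X}_\infty)$ together with an independent Brownian completion, which is why an extension of $\bP$ is needed. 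Setting $C_t:=\qvsingle{X}_t$, the time-change formula for local times of continuous local martingales gives $\loct{X}{a}{t}=\loct{B}{a}{C_t}$, so
\[
	A(C_t)=\tfrac12\int_{\IR}\loct{B}{a}{C_t}\,m(\d a)=\tfrac12\int_{\IR}\loct{X}{a}{t}\,m(\d a).
\]
Granting the identity $\tfrac12\int_{\IR}\loct{X}{a}{t}\,m(\d a)=t$, we get that $A\circ C$ is the identity; since $m$ is strictly positive and $B$ is a Brownian motion, $A$ is continuous and strictly increasing, so its right inverse $\gamma$ is its genuine inverse, whence $C=\gamma$ and $X_t=B_{\gamma(t)}$.

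\emph{The speed-measure identity.} The generator of a diffusion on natural scale with speed measure $m$ is $\mathcal{G}=\frac{\d}{\d m}\frac{\d}{\d x}$. Choose a convex function $f$ with $f'(b)-f'(a)=m((a,b])$ for $a<b$, i.e.\ whose second-derivative measure is $m$, so that $\mathcal{G}f\equiv1$. The martingale problem for $X$ gives that $f(X_t)-f(X_0)-t$ is a local martingale, and the It\^o--Tanaka formula for the continuous semimartingale $X$ gives $f(X_t)-f(X_0)=\int_0^t f'(X_s)\,\d X_s+\tfrac12\int_{\IR}\loct{X}{a}{t}\,m(\d a)$, in which $\int_0^t f'(X_s)\,\d X_s$ is a local martingale because $X$ is. Subtracting, $\tfrac12\int_{\IR}\loct{X}{a}{t}\,m(\d a)-t$ is a continuous local martingale of finite variation vanishing at $0$, hence indistinguishable from $0$; this is the identity used above.

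The step that requires genuine care is the last one: a function $f$ with $\mathcal{G}f\equiv1$ is generically unbounded and outside any convenient operator domain, so one must localise at the exit times $\tau_N$ of $(-N,N)$, run the argument with a bounded truncation $f_N$ satisfying $\mathcal{G}f_N=1$ on $(-N,N)$, and let $N\to\infty$ by non-explosion; one must also verify that $t\mapsto\int_{\IR}\loct{X}{a}{t}\,m(\d a)$ is continuous even though $m$ is only locally finite, using that $\loct{X}{a}{t}=0$ for $a$ outside the compact range of $X$ on $[0,t]$, and the boundary/transience adjustments to DDS should be made precise. A cleaner alternative is to \emph{construct} $Y_t=B_{\gamma(t)}$ from a Brownian motion $B$ with $\gamma=A^{-1}$, check directly from the occupation-times formula for $B$ that $Y$ is a diffusion on natural scale with speed measure $m$ so that $Y$ has the same law as $X$, and then recover the pathwise statement by combining this with DDS.
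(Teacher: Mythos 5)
The paper does not prove this statement at all: it is quoted verbatim as Theorem V.47.1 of Rogers and Williams and used as a black box, so there is no internal proof to compare against. Your argument is a correct and fairly standard route to the result: reduce to the Dambis--Dubins--Schwarz representation $X_t=B_{\qvsingle{X}_t}$, transport local times through the time change, and identify the clock with $\gamma$ via the identity $\tfrac12\int_{\IR}\loct{X}{a}{t}\,m(\d a)=t$; the normalisation also matches the paper's conventions (for $m(\d x)=2\,\d x$ it returns $\gamma(t)=t$). The one step I would press you on is the phrase ``the martingale problem for $X$ gives that $f(X_t)-f(X_0)-t$ is a local martingale'': with the paper's definition of the speed measure through mean exit times \eqref{eq_def_speed_measure}, this is not part of the definition and, if taken from the general theory, risks looking circular, since the usual proofs of the generator/martingale characterisation go through the very time-change representation you are proving. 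It is, however, easy to close in the localised setting you describe: with $w(x)=\Esp_x[\tau_{ab}]=\int G_{a,b}(x,y)\,m(\d y)$, the strong Markov property gives that $w(X_{t\wedge\tau_{ab}})+t\wedge\tau_{ab}=\Esp_x[\tau_{ab}\mid \bF_{t\wedge\tau_{ab}}]$ is a martingale, and $-w$ differs from your truncated $f_N$ by an affine function of $X$, which is a local martingale since $X$ is; combined with It\^o--Tanaka this yields the speed-measure identity without invoking the martingale problem. Your closing ``cleaner alternative'' (construct $Y_t=B_{\gamma(t)}$ and check it is a diffusion with speed measure $m$) is in fact closer to the construction in Rogers--Williams, but note that by itself it gives only equality in law, so to obtain the pathwise statement for the given $X$ you still need the DDS step or an equivalent identification.
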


\begin{corollary}[\cite{RogWilV2}, p.277, Remark~(ii)]\label{cor_time_changed_brownian_as_diffusion_process}
	Let $m_X $ be a locally finite strictly positive measure on $\IR $ and $B $ a standard Brownian motion defined on a probability space $\bP_x = (\Omega, \bF, \Prob_x) $ such that $\Prob_x(B_0 = x)= 1 $.
	For every $y\in \IR $, let $\loct{B}{y}{} $ the time-continuous version of the local time of $B$ at $y$, $A_X(t) = \frac{1}{2}\int_{\IR} \loct{B}{y}{t}m_X(\rd y) $  and $\gamma_X(t) $ the right-inverse of $A_X(t) $. 
	Then, the process $X$ such that $X_t = B_{\gamma_X(t)} $ for every $t\ge 0 $ is a diffusion process on natural scale with speed measure $m_X(\rd x) $ and such that $\Prob_x \bigbraces{X_0 = x}= 1 $.
\end{corollary}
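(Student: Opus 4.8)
The plan is to realize the constructed process $X$ as the image of $B$ under exactly the measurable path functional that, by Theorem~\ref{thm_diffusion_as_time_changed_brownian_motion}, turns a standard Brownian motion into a diffusion on natural scale with speed measure $m_X$, and then to transfer the conclusion by equality in law.

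First I would check that $A_X$ is, $\Prob_x$-almost surely, a continuous strictly increasing bijection of $\IR_{+}$ onto itself, so that $\gamma_X$ is a well-defined continuous strictly increasing inverse and $X_t=B_{\gamma_X(t)}$ is a genuine continuous process with $X_0=x$. Finiteness and continuity of $t\mapsto A_X(t)$ follow from the joint continuity of the Brownian local time $(t,y)\mapsto\loct{B}{y}{t}$, the fact that $\loct{B}{y}{t}$ vanishes for $y$ outside the compact range of $B$ on $[0,t]$, and local finiteness of $m_X$ (dominated convergence). For strict monotonicity, fix $s<t$: the occupation-times formula $\int_{s}^{t}\vd u=\int_{\IR}\bigbraces{\loct{B}{y}{t}-\loct{B}{y}{s}}\vd y$ forces $\loct{B}{y}{t}>\loct{B}{y}{s}$ on a nonempty set, which is open by joint continuity of the local time and hence has positive $m_X$-measure since $m_X$ is strictly positive; thus $A_X(t)>A_X(s)$. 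Finally $A_X(\infty)=\infty$ a.s.\ by recurrence of $B$ (every open interval carries positive $m_X$-mass and accumulates infinite Brownian local time); this last point can also be read off a posteriori from the equality in law below, since the diffusion furnished by Theorem~\ref{thm_diffusion_as_time_changed_brownian_motion} is non-explosive. Note also that $\gamma_X(t)=\inf\{u\ge 0:A_X(u)>t\}$ is a finite stopping time of the Brownian filtration for each $t$.

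To identify the law of $X$, observe that the assignment $w\mapsto F(w):=\bigbraces{w_{\gamma(t)}}_{t\ge 0}$, where $\gamma$ is the right inverse of $t\mapsto\frac12\int_{\IR}\loct{w}{y}{t}\,m_X(\vd y)$, is a fixed measurable map on the path space of standard Brownian motion started at $x$ (the local time is a canonical measurable functional of the path, integration against $m_X$ is measurable by Tonelli, inversion and composition preserve measurability). By Theorem~\ref{thm_diffusion_as_time_changed_brownian_motion} applied to the diffusion $Y$ on natural scale with speed measure $m_X$ and $\Prob_x(Y_0=x)=1$, there is, on a possibly enlarged space, a standard Brownian motion $\widehat B$ started at $x$ with $Y=F(\widehat B)$; hence the law of $Y$ on path space is the Wiener measure started at $x$ pushed forward by $F$. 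By construction $X=F(B)$ for our Brownian motion $B$, which also has the Wiener law started at $x$, so $X$ and $Y$ have the same law. Since being a diffusion on natural scale with speed measure $m_X$ is a property of the law of the process — it is encoded in the finite-dimensional distributions through the strong Markov property and the hitting/exit-time identities \eqref{eq_def_scale_function}--\eqref{eq_def_speed_measure} — it follows that $X$ is such a diffusion.

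Alternatively, one can argue directly: $X$ is a continuous local martingale (apply optional stopping to $B$ along the stopping times $\gamma_X(t)$), hence on natural scale, with $\qvsingle{X}_t=\gamma_X(t)$; local times are invariant under this (strictly increasing, hence non-freezing) time change, $\loct{X}{a}{t}=\loct{B}{a}{\gamma_X(t)}$; and evaluating $A_X$ at $\gamma_X(t)$ yields $t=\frac12\int_{\IR}\loct{X}{a}{t}\,m_X(\vd a)$, which characterizes $m_X$ as the speed measure of $X$ (via Theorem~\ref{thm_diffusion_as_time_changed_brownian_motion} read for $X$). In either route the main obstacle is bookkeeping rather than conceptual: in the law-transfer argument one must be careful that the local time entering the construction is literally the same measurable functional of the driving path, and that the space extension in Theorem~\ref{thm_diffusion_as_time_changed_brownian_motion} does not interfere with the identification of laws; in the direct argument, the technical point is that the occupation identity $t=\frac12\int\loct{X}{a}{t}\,m_X(\vd a)$ does pin down the speed measure uniquely.
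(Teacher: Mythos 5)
The paper offers no separate argument for this corollary: it is treated as immediate from the quoted Rogers--Williams theorem (whose original form in fact contains the construction direction, i.e.\ precisely the statement of the corollary), so there is no ``paper proof'' to match yours against. Your main, law-transfer route is a correct and natural way to obtain the corollary from Theorem~\ref{thm_diffusion_as_time_changed_brownian_motion} as it is stated here, and your preliminary verification that $A_X$ is a.s.\ finite, continuous, strictly increasing with $A_X(\infty)=\infty$ (compact support of $y\mapsto\loct{B}{y}{t}$, occupation-times formula plus strict positivity of $m_X$, recurrence) is exactly the right bookkeeping; you also correctly read $\loct{B}{y}{t}$ in place of the statement's typo $\loct{B}{0}{t}$. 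The one caveat you should make explicit is that the transfer argument takes as input the \emph{existence} of a diffusion $Y$ on natural scale with speed measure $m_X$ started at $x$: this is classical, but in the standard references that existence is established precisely by the time-change construction you are justifying, so unless you cite existence from an independent source (or simply cite the construction half of Rogers--Williams V.47.1, as the paper implicitly does) the argument risks being circular. With that citation in place, the rest is sound: $F$ is a fixed measurable functional on Wiener space, the extension in Theorem~\ref{thm_diffusion_as_time_changed_brownian_motion} does not alter the law of $\widehat B$, and ``diffusion on natural scale with speed measure $m_X$'' is indeed a property of the law via \eqref{eq_def_scale_function}--\eqref{eq_def_speed_measure}.

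Your ``alternative direct'' route, by contrast, is not complete as written. Knowing that $X$ is a continuous local martingale with $\qvsingle{X}_t=\gamma_X(t)$ and that $\loct{X}{a}{t}=\loct{B}{a}{\gamma_X(t)}$ does not make $X$ a diffusion: the missing step --- and the real content of any direct proof --- is the strong Markov property of $X$, which must be derived from the fact that $A_X$ is a continuous, strictly increasing additive functional of $B$ and that the $\gamma_X(t)$ are stopping times; only once $X$ is known to be a regular diffusion can you apply Theorem~\ref{thm_diffusion_as_time_changed_brownian_motion} ``read for $X$'' and pin down its speed measure through the identity $t=\frac{1}{2}\int_{\IR}\loct{X}{a}{t}\,m_X(\vd a)$. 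So either keep the direct route as an avowed sketch and rest the proof on the law-transfer argument with an explicit existence citation, or supply the Markov-property step if you want the construction proved from scratch.
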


The following results, given as exercises in \cite{RevYor}, allow us to characterize the local time of re-scaled and time-changed processes respectively.

\begin{lemma}[\cite{RevYor}, Exercise~VI.1.23]\label{lem_space_changed_local_time}
	Let $X$ be a continuous semi-martingale and $f$ a strictly increasing difference of two convex functions.
	If $f(X) = \process{f(X_t)} $, then for every $a \in \II $ and $t \ge 0 $,
	\begin{equation}
		\loct{f(X)}{f(a)}{t}	= f'(a)\loct{X}{a}{t}
	\end{equation}
	almost surely, where $f' $ is the right-derivative of $f$.
\end{lemma}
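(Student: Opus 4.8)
The plan is to deduce the identity from the occupation times formula applied to both $X$ and $Y:=f(X)$, together with the change of variables $b=f(a)$, and then to turn the resulting almost-everywhere-in-space identity into an everywhere identity by a right-continuity argument. First, since $f$ is a difference of two convex functions it is locally Lipschitz, hence continuous and locally absolutely continuous; its right-derivative $f'$ is a right-continuous function of locally bounded variation and its distributional second derivative $f''$ is a locally finite signed measure. By the Itô--Tanaka (generalized Itô) formula, $Y$ is a continuous semimartingale, and since its continuous local-martingale part is $\int_0^{\cdot}f'(X_s)\,\vd X^c_s$ (with $X^c$ the continuous local-martingale part of $X$), its quadratic variation satisfies
\[
\vd\qvsingle{Y}_s=\bigl(f'(X_s)\bigr)^2\,\vd\qvsingle{X}_s .
\]
Which Borel representative of $f'$ is used is immaterial here, since $\int_0^t\indic{X_s\in N}\,\vd\qvsingle{X}_s=\int_N\loct{X}{x}{t}\,\vd x=0$ for any Lebesgue-null $N$.

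Next, fix $t\ge 0$ and a bounded Borel function $h$. Applying the occupation times formula (Corollary~1.6 of \cite{RevYor}-Chapter~VI) to $Y$ and inserting the bracket above,
\[
\int_{\IR}h(b)\,\loct{Y}{b}{t}\,\vd b=\int_0^t h(Y_s)\,\vd\qvsingle{Y}_s=\int_0^t h\bigl(f(X_s)\bigr)\bigl(f'(X_s)\bigr)^2\,\vd\qvsingle{X}_s ,
\]
while applying it to $X$ with the bounded Borel test function $x\mapsto h(f(x))\,(f'(x))^2$ gives
\[
\int_0^t h\bigl(f(X_s)\bigr)\bigl(f'(X_s)\bigr)^2\,\vd\qvsingle{X}_s=\int_{\IR}h\bigl(f(x)\bigr)\bigl(f'(x)\bigr)^2\,\loct{X}{x}{t}\,\vd x .
\]
Since $f$ is strictly increasing and locally absolutely continuous, the substitution $b=f(x)$ pushes the measure $f'(x)\,\vd x$ forward to Lebesgue measure on the interval $f(\II)$, so the last integral equals $\int_{f(\II)}h(b)\,f'\bigl(f^{-1}(b)\bigr)\,\loct{X}{f^{-1}(b)}{t}\,\vd b$. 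Comparing with the first display (and using $\loct{Y}{b}{t}=0$ for $b$ outside $f(\II)$) we obtain $\int_{\IR}h(b)\loct{Y}{b}{t}\,\vd b=\int_{\IR}h(b)\,\indic{b\in f(\II)}\,f'(f^{-1}(b))\,\loct{X}{f^{-1}(b)}{t}\,\vd b$ for every bounded Borel $h$; therefore, almost surely, for Lebesgue-a.e.\ $b$,
\[
\loct{Y}{b}{t}=\indic{b\in f(\II)}\,f'\bigl(f^{-1}(b)\bigr)\,\loct{X}{f^{-1}(b)}{t}.
\]

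It remains to upgrade this to every $a$. Work on the event of full probability on which the occupation times formulas hold simultaneously for all $t$ (using jointly measurable versions of the local times), and fix $t$. The map $a\mapsto\loct{f(X)}{f(a)}{t}$ is the composition of the continuous increasing map $a\mapsto f(a)$ with the right-continuous map $b\mapsto\loct{Y}{b}{t}$, hence is right-continuous in $a$; and $a\mapsto f'(a)\loct{X}{a}{t}$ is a product of right-continuous functions of $a$, hence right-continuous. By the previous step these two right-continuous functions coincide for $a$ outside a set of the form $f^{-1}(N)$ with $N$ Lebesgue-null. Because $f$ is strictly increasing and continuous, $f(I)$ is a non-degenerate interval for every non-degenerate interval $I$, so $f^{-1}(N)$ contains no interval and its complement is dense; two right-continuous functions that agree on a dense set agree everywhere. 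Thus $\loct{f(X)}{f(a)}{t}=f'(a)\loct{X}{a}{t}$ for every $a\in\II$ and every $t\ge 0$, almost surely.

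The main obstacle is this last step rather than the computation: the occupation times argument only yields the identity for Lebesgue-a.e.\ level and is insensitive to the choice of version of the local time and of one-sided derivative, whereas the statement requires exactly the \emph{right}-derivative $f'$ together with the right-continuous version of the local time. The point that makes the density argument go through uniformly in $a$ — including at points where $f$ fails to be differentiable — is that strict monotonicity of $f$ prevents $f^{-1}$ from inflating a Lebesgue-null set to a set with nonempty interior. A direct proof via the Itô--Tanaka formula applied to $x\mapsto\lvert f(x)-f(a)\rvert$ is also possible, but there the same bookkeeping of one-sided derivatives and of the $\sgn(0)$ convention becomes the delicate part.
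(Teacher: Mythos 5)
Your proof is correct. There is nothing in the paper to compare it with: the lemma is quoted as Exercise 1.23 of \cite{RevYor}, Chapter VI, and no proof is given there, so your argument stands on its own. Your route — identify $\vd\qvsingle{f(X)}_s=(f'(X_s))^2\vd\qvsingle{X}_s$ via Itô--Tanaka (with the correct remark that the choice between $f'_+$ and $f'_-$ is irrelevant under $\vd\qvsingle{X}$), apply the occupation times formula to both $X$ and $f(X)$, change variables $b=f(x)$, and then upgrade the Lebesgue-a.e.\ identity in the level to all levels using right-continuity of $a\mapsto\loct{X}{a}{t}$, of $f'$, and the fact that strict monotonicity prevents $f^{-1}$ of a null set from containing an interval — is a complete alternative to the solution usually intended for that exercise, which applies Itô--Tanaka directly to $x\mapsto|f(x)-f(a)|$ and compares with Tanaka's formula for $|f(X)-f(a)|$; the direct route yields the identity for each fixed $a$ without the a.e.-to-everywhere step, while yours makes transparent why the \emph{right} derivative is the one that pairs with the right-continuous version of the local time. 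Two minor points: the test function $x\mapsto h(f(x))(f'(x))^2$ is only locally bounded, but the occupation times formula applies to positive Borel integrands (split $h$ into its positive and negative parts; finiteness holds since paths are bounded on $[0,t]$); and your density argument tacitly assumes $a$ admits points of $\II$ to its right, which is harmless because the right-derivative is only defined at such points. Finally, a caveat addressed to the paper rather than to you: with the convention $\sgn(x)=\indic{x\ge 0}-\indic{x<0}$ used in \eqref{eq_local_time_first_definition}, the Tanaka-defined local time is the left-continuous version, for which the analogous identity would feature the left derivative; your proof establishes the statement for the right-continuous (Revuz--Yor) version, consistent with the right-derivative in the statement, and the distinction is immaterial in the paper's application where $f=S$ is $C^1$.
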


\begin{lemma}[\cite{RevYor}, Exercise~VI.1.27]\label{lem_time_changed_local_time}
	Let $X$ be a semi-martingale with state-space $\II $, $T $ a time change
	and $Y$ is the time changed process such that $Y_t = X_{T(t)} $ for every $t\ge 0 $. 
	Then for every $a\in \II $ and $t\ge 0 $, 
	\begin{equation}\label{eq_local_time_time_scaling_prop}
		\loct{Y}{a}{t} = \loct{X}{a}{T(t)}
	\end{equation}
	almost surely.
\end{lemma}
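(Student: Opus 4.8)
My plan is to express both $\loct{Y}{a}{t}$ and $\loct{X}{a}{T(t)}$ as the common limit of one and the same family of occupation-time integrals, relying on the representation~\eqref{eq_local_time_alternative_definition}. I would restrict attention to the case where $T$ is continuous, so that $Y_t = X_{T(t)}$ is again a continuous semi-martingale; this is the only situation needed in the sequel, and the general case may be treated as in~\cite{RevYor}.

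The first step records that the quadratic variation commutes with the (continuous) time change, $\qvsingle{Y}_t = \qvsingle{X}_{T(t)}$, which follows from the analogous standard fact applied to the continuous local-martingale part of $X$ (a time-changed continuous local martingale is a continuous local martingale for the time-changed filtration, with quadratic variation the time change of the original one). The second, and crucial, step is the change of variables: since $\vd \qvsingle{Y}_s$ is the image of $\vd \qvsingle{X}_u$ under the continuous increasing substitution $u = T(s)$, one has, for every $\epsilon > 0$ and $t\ge 0$, $\Prob$-almost surely,
\begin{equation*}
	\frac{1}{\epsilon}\int_{0}^{t}\indic{0\le Y_s - a<\epsilon}\,\vd \qvsingle{Y}_s = \frac{1}{\epsilon}\int_{0}^{T(t)}\indic{0\le X_u - a<\epsilon}\,\vd \qvsingle{X}_u ,
\end{equation*}
which is an ordinary pathwise Stieltjes change of variables, involving no stochastic calculus. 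Finally I would let $\epsilon\to 0$: the left-hand side converges in $\Prob$-probability to $\loct{Y}{a}{t}$ by~\eqref{eq_local_time_alternative_definition} applied to $Y$, while, $T(t)$ being an almost surely finite stopping time, the right-hand side converges in $\Prob$-probability to $\loct{X}{a}{T(t)}$ by~\eqref{eq_local_time_alternative_definition} applied to $X$ and evaluated at $T(t)$. Uniqueness of limits in probability then gives $\loct{Y}{a}{t} = \loct{X}{a}{T(t)}$ for each fixed $t$, and then for all $t$ simultaneously by path-continuity of both sides.

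The step I expect to require the most care is the passage to the limit on the right-hand side at the random time $T(t)$: this needs the occupation-time approximation~\eqref{eq_local_time_alternative_definition} in its locally-uniform-in-time form, so that it may legitimately be evaluated at a finite stopping time, rather than merely for each deterministic time --- a standard but non-automatic strengthening that I would have to establish or cite explicitly. An alternative route that sidesteps this uses Tanaka's formula directly: writing~\eqref{eq_local_time_first_definition} for $X$ at time $T(t)$ and for $Y$ at time $t$ and subtracting --- the left-hand sides agree because $Y_t = X_{T(t)}$ and $T(0)=0$ --- reduces the statement to the identity $\int_0^t\sgn(Y_s - a)\,\vd Y_s = \int_0^{T(t)}\sgn(X_u - a)\,\vd X_u$. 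This is precisely the change-of-variables formula for stochastic integrals along a time change, i.e.\ Theorem~\ref{thm_change_of_variables_stoch_integral} applied on the time-changed filtration $\family{\bF_{T(t)}}$; the delicate point there is to check that $\process{\sgn(Y_s - a)}$ is predictable for the appropriate filtration and to orient the substitution in the correct direction, and it is this bookkeeping that I would handle most carefully.
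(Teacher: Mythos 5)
The paper offers no proof of this lemma at all: it is imported verbatim as Exercise 1.27 of \cite{RevYor}, Chapter VI, and is only ever applied with continuous time changes (the right-inverses of continuous, strictly increasing additive functionals, or time changes across whose jump intervals the process is constant). So there is nothing in the paper to compare against line by line; what you have written is a correct reconstruction of the standard solution. Of your two routes, the Tanaka-based one is the cleaner and is essentially the intended argument: both Tanaka formulas \eqref{eq_local_time_first_definition} hold almost surely simultaneously for all times, so evaluating the one for $X$ at the finite stopping time $T(t)$ is legitimate without any further uniformity, the identity $\int_0^t\sgn(Y_s-a)\,\vd Y_s=\int_0^{T(t)}\sgn(X_u-a)\,\vd X_u$ is the time-change formula for stochastic integrals (for continuous $T$ this is Proposition V.1.5 of \cite{RevYor} rather than the Kobayashi statement, whose transcription in the paper is oriented differently), and predictability of $\sgn(X_\cdot-a)$ is immediate from continuity of $X$. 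The occupation-density route also works, but, as you correctly flag, the in-probability limit \eqref{eq_local_time_alternative_definition} cannot be evaluated at the random time $T(t)$ as it stands; the painless fix is to use the almost-sure form of the approximation (or the occupation-times formula, which holds a.s. for all $t$ simultaneously) rather than to prove a uniform-in-time strengthening. One caveat on your closing remark: the restriction to continuous $T$ is not merely a convenience, since for a time change with jumps the identity genuinely fails unless $X$ is constant on the jump intervals (a time change that leaps over a stretch during which $X$ accrues local time at $a$ makes $\loct{X}{a}{T(t)}$ strictly larger than $\loct{Y}{a}{t}$); so "the general case" requires that extra hypothesis, which is exactly the regime in which the paper uses the lemma.
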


The last result allows us to extend convergences in probability of the marginals to locally uniform convergence in time, in probability.

\begin{lemma}[\cite{JacPro}, (2.2.16)]\label{lem_uniform_in_time_in_probability_conv}
	Let $\{X^{n}\}_{n\in \IN} $ be a sequence of increasing processes and $X$ an almost surely continuous  bounded process.
	If $\{X^{n}\}_{n\in \IN} $ and $X$ are defined on the probability space $(\Omega, \process{\bF},\Prob ) $ and
	\begin{equation}\label{eq_convergence_in_prob_lem_2_6}
		X_{t}^{n} \convergence{} X_t
	\end{equation}
	in probability, for every $t \in D $ with $D$ a dense subset of $\IR_+ $.
	Then, the convergence is locally uniform in time, in probability.
\end{lemma}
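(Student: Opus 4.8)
The statement is a stochastic version of the classical Dini--P\'olya lemma: pointwise convergence of monotone functions towards a continuous limit is automatically locally uniform. The plan is to reduce convergence in probability to almost sure convergence along subsequences, and then to run a pathwise argument using only the monotonicity of the $X^n$ and the continuity of $X$.

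First I would fix a horizon $t>0$ and recall that, to prove $\sup_{s\le t}|X^n_s-X_s|\to 0$ in probability, it suffices to show that every subsequence of $(X^n)$ has a further subsequence along which this supremum tends to $0$ almost surely. So I would start from an arbitrary subsequence, set $D=(\mathbb{Q}\cap[0,t])\cup\{t\}$, and use the hypothesis \eqref{eq_convergence_in_prob_lem_2_6} together with a diagonal extraction to pass to a further subsequence $(X^{n_k})$ with $X^{n_k}_q\to X_q$ almost surely, simultaneously for all $q\in D$. Intersecting with the almost sure event on which $X$ is continuous on $[0,t]$ produces a full-probability set $\Omega_0$ on which the remaining estimate is purely deterministic.

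Next, on $\Omega_0$ I would fix $\omega$ and $\delta>0$ and, using uniform continuity of $X_\cdot(\omega)$ on the compact interval $[0,t]$, choose a finite partition $0=t_0<\dots<t_K=t$ with all nodes in $D$ and with the oscillation of $X_\cdot(\omega)$ at most $\delta$ on each subinterval. Monotonicity of each $X^{n_k}_\cdot(\omega)$ then sandwiches $X^{n_k}_s(\omega)$ between its values at consecutive nodes, so that for $s$ in a given subinterval the error $|X^{n_k}_s(\omega)-X_s(\omega)|$ is bounded by $\max_{0\le j\le K}|X^{n_k}_{t_j}(\omega)-X_{t_j}(\omega)|$ plus a fixed multiple of $\delta$. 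Taking the supremum over $s\le t$ and then $\limsup_k$ kills the maximum term (it is a finite maximum over $D$), leaving $\limsup_k\sup_{s\le t}|X^{n_k}_s(\omega)-X_s(\omega)|\le C\delta$; sending $\delta\downarrow 0$ completes the subsequence argument, hence the lemma.

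The one delicate point is the interaction between ``convergence at fixed times'' and ``a partition adapted to the path'': the partition nodes depend on $\omega$ through the modulus of continuity of $X$, so the pointwise-in-probability hypothesis cannot be invoked at those nodes directly. This is precisely why the argument must route through a subsequence along which convergence holds almost surely and \emph{simultaneously} on the fixed countable set $D$; after that extraction the computation is the deterministic monotone-convergence estimate and uses no probability. Boundedness of $X$ plays only a minor role, ensuring all quantities involved are finite --- what the proof genuinely relies on is the monotonicity of the $X^n$ and the continuity of $X$.
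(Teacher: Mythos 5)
Your argument is correct. Note, however, that the paper does not prove Lemma~\ref{lem_uniform_in_time_in_probability_conv} at all: it is imported as a known result, cited as (2.2.16) of \cite{JacPro}, and used as a black box. What you have written is a self-contained proof of that cited result, and it is the standard one: reduce convergence in probability to almost sure convergence along subsequences, extract (by a diagonal argument) a subsequence converging almost surely simultaneously on the countable set $D=(\mathbb{Q}\cap[0,t])\cup\{t\}$, and then run the deterministic Dini--P\'olya sandwich estimate $|X^{n_k}_s-X_s|\le \max_j |X^{n_k}_{t_j}-X_{t_j}|+\delta$ on each subinterval of an $\omega$-dependent partition with nodes in $D$, which is legitimate precisely because after the extraction no probabilistic statement is invoked at the nodes. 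You correctly identify that this extraction step is the only delicate point, and that boundedness of $X$ is inessential (uniform continuity on $[0,t]$ is what is used). The only technical remark worth adding if you wanted a fully rigorous write-up is the measurability of $\sup_{s\le t}|X^n_s-X_s|$; since each $X^n$ is monotone and $X$ is continuous, this supremum coincides with the supremum over $D$ (monotone paths have one-sided limits lying between neighbouring values), so it is a genuine random variable and the subsequence criterion applies.
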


\subsection{Time-scaling for the sticky Brownian motion}

\begin{lemma}
	Let $\bigcubraces{(\Omega, \bF, \process{\bF_t}, \Prob^{\rho}_{x} );x\in \IR, \rho \ge 0}$ be a family of filtered probability spaces 
	and $X^{\rho} = \process{X^{\rho}_t}$ a process defined on $(\Omega, \bF, \process{\bF_t})$ such that under $\Prob^{\rho}_{x} $ it is the sticky Brownian motion of stickiness parameter $\rho $ and $\Prob^{\rho}_{x}(X_{0}^{\rho} = x)=1 $. 
	Then, 
	\begin{multline}\label{eq_rescaled_process}
		\Law_{\Prob^{\rho}_x}\bigbraces{X^{\rho}_{ct} , 	\loct{X^{\rho}}{0}{ct}, \occt{X^{\rho}}{+}{ct}; t\ge 0 }\\
		=\Law_{\Prob^{\rho/\sqrt{c}}_{x}}\bigbraces{ \sqrt{c} X^{\rho/ \sqrt{c}}_{t}  , 	\sqrt{c} \loct{X^{\rho / \sqrt{c}}}{0}{t}, c\occt{X^{\rho/\sqrt{c}}}{+}{t} ; t\ge 0 },
	\end{multline}
	where $\Prob^{\rho}_{x}(X^{\rho/\sqrt{c}}_{0} = \sqrt{c}x)=1 $ and $(\loct{X^{\rho}}{0}{},\occt{X^{\rho}}{+}{}) $ and $(\loct{X^{\rho/\sqrt{c}}}{0}{},\occt{X^{\rho/\sqrt{c}}}{+}{}) $ are the local times, occupation times\footnote{The occupation time by $X$ of a measurable set $B \in \mathbb{B}(\IR)$ is the process $\occt{X}{B}{}$ defined for every $t\ge 0 $ by 
		\[ \occt{X}{B}{t} = \int_{0}^{t} \indic{X_s \in B} \vd s. \]} of $\IR_{+} $ pairs of $X^{\rho} $ and
	$X^{\rho / \sqrt{c}} $ respectively.
\end{lemma}

\begin{proof}
	From \cite{Touhami2021}, the joint density of $(X^{\rho}_t,\loct{X^{\rho}}{0}{t},\occt{X^{\rho}}{+}{ct}) $ is defined for every $(t,x,y,l,o) \in \IR_{+}\times\IR^{2} \times [0,\frac{2}{\rho}] \times [0,t] $ by
	\begin{multline}
		\Prob_x(X^{\rho}_t \in \vd y,\loct{X^{\rho}}{0}{t} \in \vd \ell,\occt{X^{\rho}}{+}{ct} \in \vd o) \\
		= q_{\rho}(t,x,y,l,o)\vd y \vd l \vd o  	   
		= h \Bigbraces{o- \rho \ell, \frac{\ell}{2}+x}h\Bigbraces{t-o, \frac{\ell}{2}-y} \vd y \vd l \vd o,
	\end{multline}
	where $h(t,x) $ is the function defined for every $t \ge 0 $ and $x\in \IR $ by
	\begin{equation}
		h(t,x) = \frac{|x|}{\sqrt{2 \pi}t^{3/2}} e^{-x^2 / 2t}.
	\end{equation}
	We observe that
	\begin{equation}
		h(ct,x) = c^{-1}h(t,x/\sqrt{c}).
	\end{equation}
	Thus,
	\begin{equation}
		\begin{aligned}
		q_{\rho}(ct,x,y,l,o) 
		&= h \Bigbraces{o- \rho \ell, \frac{\ell}{2}+x}h\Bigbraces{ct-o, \frac{\ell}{2}-y}
		\\ &= h \Bigbraces{c \Bigbraces{ \frac{o}{c} - \frac{\rho \ell}{c}},  \frac{\ell}{2}+x} 
		h \Bigbraces{c \Bigbraces{t - \frac{o}{c} }, \frac{\ell}{2}-y } 
		\\ &= c^{-2} h \Bigbraces{ \frac{o}{c} - \frac{\rho \ell}{c},  \frac{\ell}{2\sqrt{c}}+\frac{x}{\sqrt{c}}} 
		h \Bigbraces{t - \frac{o}{c} , \frac{\ell}{2\sqrt{c}}-\frac{y}{\sqrt{c}} }
		\\ &= c^{-2} q_{\rho/\sqrt{c}}\Bigbraces{t, \frac{x}{\sqrt{c}},\frac{y}{\sqrt{c}},\frac{l}{\sqrt{c}},\frac{o}{c}}
		\end{aligned}
	\end{equation}
	and
	\begin{equation}
		q_{\rho}(ct,x,y,l,o) \vd y \vd \ell \vd o =
		q_{\rho/\sqrt{c}}\Bigbraces{t, \frac{x}{\sqrt{c}},\frac{y}{\sqrt{c}},\frac{l}{\sqrt{c}},\frac{o}{c}} \vd \Bigbraces{\frac{y}{\sqrt{c}}} \vd \Bigbraces{\frac{\ell}{\sqrt{c}}}\Bigbraces{ \vd \frac{o}{c}},
	\end{equation}
	which finishes the proof.
\end{proof}

\begin{corollary}\label{cor_semi_group_scaling}
	Let $ X^{\rho} = \process{X^{\rho}_t} $ be the sticky Brownian motion of stickiness parameter $\rho >0 $ and $  \process{P^{\rho}_{t}} $ be its semi-group. Then, for every measurable function $ h: \IR \mapsto \IR$,
	\begin{equation}\label{eq_semi_group_scaling}
		P_{t}^{\rho \sqn} h(x \sqn) = 
		\Esp_x \bigbraces{h(\sqn \hfprocess{X}{\rho}{t}) } .
	\end{equation}
\end{corollary}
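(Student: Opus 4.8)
The plan is to deduce the semi-group scaling identity directly from the path-wise scaling of the sticky Brownian motion established in the previous lemma, namely equation~\eqref{eq_rescaled_process}. The key observation is that \eqref{eq_rescaled_process} is an identity in law between the rescaled process $\{X^{\rho}_{ct}\}$ and $\{\sqrt{c}\,X^{\rho/\sqrt{c}}_t\}$, where the starting-point convention on the right is governed by $\Prob^{\rho/\sqrt{c}}_x = \Prob^{\rho}_{x\sqrt{c}}$. So the strategy is to apply this equality in law to the single-time marginal, with the right choice of the constant $c$, and then translate it into a statement about $P^{\rho}_t$.

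First I would specialize \eqref{eq_rescaled_process} to $c = n$ (so $\sqrt{c} = \sqn$), evaluated at time $t$. This gives, for any bounded measurable $h$,
\begin{equation}
	\Esp^{\rho\sqn}_x\bigbraces{h(X^{\rho\sqn}_{nt})} = \Esp^{\rho}_{x}\bigbraces{h(\sqn\, X^{\rho}_t)},
\end{equation}
once one sets the stickiness in the lemma to $\rho\sqn$ (so that $\rho\sqn/\sqn = \rho$ plays the role of the rescaled stickiness), and uses the convention relating the two probability measures. Concretely, writing the lemma with parameter $\tilde\rho := \rho\sqn$ and dilation constant $c = n$: the law of $(X^{\tilde\rho}_{nt})$ under $\Prob^{\tilde\rho}_x$ equals the law of $(\sqn\, X^{\tilde\rho/\sqn}_t) = (\sqn\, X^{\rho}_t)$ under $\Prob^{\tilde\rho/\sqn}_x = \Prob^{\rho}_x$. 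Since $\rho\sqn\cdot$ — wait, I must be careful with the starting point: on the left, $X^{\tilde\rho}_{nt}$ starts from $x$ under $\Prob^{\tilde\rho}_x$, and by the convention $\Prob^{\rho}_x = \Prob^{\tilde\rho}_{x\sqn}$ on the spaces, so the left side is really $\Esp^{\tilde\rho}_{?}$ with the appropriate base point. I would therefore replace $x$ by $x\sqn$ in \eqref{eq_rescaled_process} to line up the starting points correctly, obtaining $\Esp^{\rho\sqn}_{x\sqn}\bigbraces{h(X^{\rho\sqn}_{nt})} = \Esp^{\rho}_{x}\bigbraces{h(\sqn\, X^{\rho}_t)}$.

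Next I would rewrite the left-hand side using the definition of the semi-group \eqref{eq_semi_group_definition}: $\Esp^{\rho\sqn}_{x\sqn}\bigbraces{h(X^{\rho\sqn}_{nt})} = P^{\rho\sqn}_{nt} h(x\sqn)$. Hmm — the claimed identity has $P^{\rho\sqn}_t$, not $P^{\rho\sqn}_{nt}$, so there is still a time-rescaling to absorb. Indeed \eqref{eq_rescaled_process} already performs the time change $t \mapsto ct$ on the left, so with $c = n$ the relation directly compares time $nt$ on the left with time $t$ on the right — but then re-reading the target \eqref{eq_semi_group_scaling}, the left side is $P^{\rho\sqn}_{t}h(x\sqn)$ evaluated at time $t$, matching the right side $\Esp_x(h(\sqn X^{\rho}_t))$ at time $t$; so in fact the correct specialization is the opposite one, $c = 1/n$, giving $\Law(X^{\rho}_{t/n}) = \Law(\sqrt{1/n}\,X^{\rho\sqn}_t)$ up to the base-point convention, and then substituting $t \mapsto nt$ and rescaling space by $\sqn$ produces exactly \eqref{eq_semi_group_scaling}. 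The main obstacle, and the only real content of the proof, is bookkeeping: getting the direction of the dilation constant right and tracking how the starting-point convention $\Prob^{\rho/\sqrt{c}}_x = \Prob^{\rho}_{x\sqrt{c}}$ interacts with the substitution $x \mapsto x\sqn$, so that both sides are anchored at the same initial value. Once the marginal-in-law identity is aligned with the correct parameters and base points, \eqref{eq_semi_group_scaling} follows immediately by unwinding the definition \eqref{eq_semi_group_definition} of $P^{\rho\sqn}_t$; no further analysis is needed.
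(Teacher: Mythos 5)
Your route is the right one and is essentially the paper's: Corollary~\ref{cor_semi_group_scaling} is stated with no separate proof precisely because it is the one-dimensional-marginal specialization of the scaling identity \eqref{eq_rescaled_process}, combined with the base-point convention $\Prob^{\rho/\sqrt{c}}_{x}=\Prob^{\rho}_{x\sqrt{c}}$ and the definition \eqref{eq_semi_group_definition} of the semi-group, and your second paragraph carries this out correctly: taking $\tilde\rho=\rho\sqn$ and $c=n$ and aligning the starting points gives $P^{\rho\sqn}_{nt}h(x\sqn)=\Esp_x\bigbraces{h(\sqn X^{\rho}_{t})}$.

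The only flaw is in the endgame, and it stems from the notation: $\hfprocess{X}{\rho}{t}$ denotes the high-frequency observation $X^{\rho}_{t/n}$, not $X^{\rho}_{t}$ (this is how it is used in \eqref{eq_functional_form} and in the proof of Lemma~\ref{lem_functional_sup_bound}, where the corollary is applied with $t=i-1$ to produce $P^{\rho\sqn}_{i-1}$). With that reading, the identity you already derived yields \eqref{eq_semi_group_scaling} by simply replacing $t$ with $t/n$; equivalently, the $c=1/n$ specialization, namely that $\sqn X^{\rho}_{t/n}$ under $\Prob_x$ has the same law as a sticky Brownian motion of stickiness $\rho\sqn$ started from $x\sqn$ evaluated at time $t$, \emph{is} the corollary directly. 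The extra ``substitute $t\mapsto nt$'' in your last sentence is spurious: performing it takes you back to the $c=n$ form $P^{\rho\sqn}_{nt}h(x\sqn)=\Esp_x\bigbraces{h(\sqn X^{\rho}_{t})}$ rather than to \eqref{eq_semi_group_scaling}, and your intermediate reading of the right-hand side of the target as $\Esp_x\bigbraces{h(\sqn X^{\rho}_{t})}$ is what caused the back-and-forth over the choice of $c$. Once the meaning of $\hfprocess{X}{\rho}{t}$ is fixed, no further work is needed and your argument closes.
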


\subsection{An estimate on the sticky semi-group}

\begin{lemma}\label{prop_semi_group_bounds}
	Let $\process{P^{\rho}_t} $ be the semi-group of the sticky Brownian motion of parameter $\rho >0 $. 
	There exists a constant $K>0 $ that does not depend on $ \rho$ such that for every real-valued  function $h(x)$ such that $h(0)=0 $,
	\begin{equation}\label{eq_lem_bound_semigroup_A}
		|P^{\rho}_t h(x)| \le K \frac{\lambda(|h|)}{\sqrt{t}},
	\end{equation}
	for every $t> 0 $, where $\lambda(g) = \int_{\IR} g(x)\vd x $.
\end{lemma}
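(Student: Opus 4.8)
The plan is to reduce the sticky semigroup estimate to an estimate on the standard Brownian semigroup via the time-change representation, exploiting the fact that the sticky Brownian motion moves "slower" than the standard one, so its transition kernel is dominated by the Gaussian kernel. Concretely, write $X^{\rho}_t = B_{\gamma(t)}$ as in Theorem~\ref{thm_diffusion_as_time_changed_brownian_motion}, where $\gamma$ is the right-inverse of $A(t) = t + \tfrac{\rho}{2}\loct{B}{0}{t}$. Since $A(t) \ge t$ for all $t \ge 0$, we have $\gamma(t) \le t$ almost surely. Then, using the representation \eqref{eq_ptk_definition} with speed measure $m_\rho(\d y) = 2\,\d y + \rho\,\delta_0(\d y)$ and the assumption $h(0) = 0$ (which kills the atom at $0$), we get
\begin{equation}
	P^{\rho}_t h(x) = \Esp_x\bigbraces{h(X^{\rho}_t)} = \Esp_x\bigbraces{h(B_{\gamma(t)})},
\end{equation}
and the goal is to bound this by $K\lambda(|h|)/\sqrt{t}$.

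First I would establish the statement for the standard Brownian motion: for the Gaussian kernel $p_s(x,y) = (2\pi s)^{-1/2}e^{-(x-y)^2/2s}$ one has the crude but sufficient bound $p_s(x,y) \le (2\pi s)^{-1/2}$, whence $|\Esp_x(h(B_s))| = |\int_\IR h(y)p_s(x,y)\,\d y| \le (2\pi s)^{-1/2}\lambda(|h|)$. Next, to pass from $B$ at the random time $\gamma(t)$ to a deterministic time, I would condition: writing $P^{\rho}_t h(x) = \Esp_x\bigsqbraces{\Esp_x\bigbraces{h(B_{\gamma(t)}) \mid \gamma(t)}}$ does not immediately work because $\gamma(t)$ is not independent of $B$. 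Instead I would use the strong Markov property of $B$ together with the fact that $\gamma(t)$ is a stopping time for the Brownian filtration (it is the hitting time of level $t$ by the additive functional $A$, which is adapted). The cleaner route: use the occupation-times / kernel formula directly. By Corollary~\ref{cor_time_changed_brownian_as_diffusion_process} the transition kernel $p^{\rho}$ of $X^{\rho}$ with respect to $m_\rho$ satisfies, for every nonnegative measurable $h$,
\begin{equation}
	\Esp_x\bigbraces{h(X^{\rho}_t)} = \int_\IR h(y)\, p^{\rho}(t,x,y)\, m_\rho(\d y),
\end{equation}
and I would prove the pointwise domination $p^{\rho}(t,x,y) \le p^{0}(t,x,y) = \tfrac12 p_t(x,y)$ for $y \ne 0$ (the factor $\tfrac12$ coming from $m_0 = 2\,\d y$), for instance via the spectral/resolvent comparison or by the elementary coupling $\gamma(t) \le t$ combined with the reflection-principle bound on the density of $B_{\gamma(t)}$. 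Given the domination, since $h(0)=0$,
\begin{equation}
	|P^{\rho}_t h(x)| \le \int_\IR |h(y)|\, p^{0}(t,x,y)\, m_0(\d y) = |P^{0}_t h(x)| \le \frac{\lambda(|h|)}{\sqrt{2\pi t}},
\end{equation}
which gives \eqref{eq_lem_bound_semigroup_A} with $K = (2\pi)^{-1/2}$, and crucially $K$ does not depend on $\rho$.

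The main obstacle is justifying the kernel domination $p^{\rho}(t,x,y) \le p^{0}(t,x,y)$ rigorously, or equivalently controlling the law of $B_{\gamma(t)}$ by that of $B_t$ in the right sense. The subtlety is that $\gamma(t)$ is a random, $B$-dependent time, so one cannot simply say "$B$ at an earlier time has a flatter density." I expect the cleanest fix is to work with the resolvent: the resolvent density $u_\lambda^{\rho}(x,y)$ of the sticky process, obtained from Green's-function formulas in terms of $(s,m_\rho)$, is explicitly smaller than $u_\lambda^0(x,y)$ for $y\ne 0$ because adding a positive atom to the speed measure only increases $A$ and hence slows the process; inverting the Laplace transform (both kernels being nonnegative) transfers the inequality to $p^{\rho}(t,\cdot,\cdot) \le p^0(t,\cdot,\cdot)$ off the atom. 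Alternatively, one can avoid densities entirely: since $\gamma(t) \le t$ and $B$ is a martingale, an argument via the Markov property at the deterministic time and the fact that $\gamma$ has independent-enough increments, or a direct computation using the known joint law of $(B_t, \loct{B}{0}{t})$, yields $\Esp_x|h(B_{\gamma(t)})| \le \sup_{s \le t}\Esp_x|h(B_s)| \cdot C$; but the constant must be shown $\rho$-free, which again points back to the kernel comparison. I would present the resolvent-comparison argument as the main line, since the Green's function of the sticky Brownian motion is classical and the monotonicity in the speed measure is transparent there.
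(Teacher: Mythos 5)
Your overall reduction is the right one---use $h(0)=0$ to kill the atom of the speed measure, dominate the sticky transition density by a Gaussian kernel with a $\rho$-free constant, then bound the Gaussian by $(2\pi t)^{-1/2}$---and this is indeed the skeleton of the paper's argument. However, the one step that carries all the content, namely the kernel domination $p_{\rho}(t,x,y)\le C\,(2\pi t)^{-1/2}e^{-(x-y)^2/2t}$ with $C$ independent of $\rho$, is precisely the step you leave open, and neither mechanism you propose for it is sound. The resolvent route breaks at the inversion: an inequality $u^{\rho}_{\lambda}(x,y)\le u^{0}_{\lambda}(x,y)$ valid for all $\lambda>0$ between Laplace transforms of nonnegative functions does \emph{not} imply the pointwise-in-$t$ inequality $p^{\rho}(t,x,y)\le p^{0}(t,x,y)$; ordering of Laplace transforms only orders exponentially weighted time averages, so monotonicity of the Green function in the speed measure---even if granted---does not transfer to fixed times. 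The fallback $\Esp_x|h(B_{\gamma(t)})|\le C\sup_{s\le t}\Esp_x|h(B_s)|$ is also of no use here: letting $s\downarrow 0$ shows that $\sup_{s\le t}\Esp_x|h(B_s)|$ is of order $|h(x)|$ (or $\norm{h}_{\infty}$), not $\lambda(|h|)/\sqrt{t}$, and the inequality itself is unjustified because $\gamma(t)$ is a $B$-dependent random time, as you yourself note.

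The way to close the gap is the one the paper takes: use the explicit transition density of the sticky Brownian motion (Borodin--Salminen, p.~108), $p_{\rho}=u_1-u_2+v_{\rho}$ with $u_1,u_2$ Gaussian terms and
\begin{equation*}
	v_{\rho}(t,x,y)=\frac{2}{\rho}\,e^{4(|x|+|y|)/\rho+8t/\rho^2}\,\erfc\Bigbraces{\frac{|x|+|y|}{\sqrt{2t}}+\frac{2\sqrt{2t}}{\rho}},
\end{equation*}
and control the $\erfc$ term by a Mills-ratio bound. Setting $z=\frac{|x|+|y|}{\sqrt{2t}}+\frac{2\sqrt{2t}}{\rho}$ one has $e^{4(|x|+|y|)/\rho+8t/\rho^2}=e^{z^2-(|x|+|y|)^2/2t}$ and $e^{z^2}\erfc(z)\le (z\sqrt{\pi})^{-1}\le \rho/(2\sqrt{2t}\sqrt{\pi})$, so $v_{\rho}(t,x,y)\le \frac{1}{\sqrt{2\pi t}}e^{-(|x|+|y|)^2/2t}$ with no $\rho$ remaining; the paper carries this out with an asymptotic Mills constant $K_{Mills}$, which is why its final constant is $K=1+K_{Mills}\sqrt{\pi}/2$ rather than $1$. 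In particular, the domination you conjectured is correct (and provable with constant one off the atom via the sharp Mills inequality), but it must be extracted from the closed-form kernel rather than from a resolvent comparison; once it is in hand, your final two displayed lines go through essentially verbatim.
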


\begin{proof}
	Let $p_{\rho}(t,x,y) $ be the probability transition kernel of the sticky Brownian motion of parameter $\rho>0 $ with respect to its speed measure $m(\d y) = 2 \d y + \frac{\rho}{2}\delta_{0}(\d y)$.
	From \cite[p.108]{BorSal},
	\begin{equation}\label{eq_sticky_kernel_decomposition_A}
		p_{\rho}(t,x,y) = u_1(t,x,y) - u_2(t,x,y) + v_{\rho}(t,x,y),
	\end{equation}
	for every $x,y\in \IR $ and $t>0 $, where $u_1,u_2,v_{\rho} $
	are defined for all $\rho,t>0 $, $x,y\in \IR $ by
	\[
	\left\{
	\begin{array}{ll}
		u_1(t,x,y) &=  \frac{1}{ \sqrt{2 \pi t}}  e^{-(x-y)^2/2t} ,\\
		u_2(t,x,y) &= \frac{1}{ \sqrt{ 2 \pi t}} e^{-(|x|+|y|)^2/2t},\\
		v_{\rho}(t,x,y) &= \frac{2}{\rho} e^{4(|x|+|y|)/\rho + 8t/\rho^2} \erfc \Bigbraces{\frac{|x|+|y|}{\sqrt{2t}} + \frac{2\sqrt{2t}}{\rho}}
	\end{array}
	\right.
	\]
	and 
	$\erfc:= [x \lra (2/\sqrt{\pi})\int_{x}^{\infty}\exp(-\zeta^2)\vd \zeta] $
	is the Gaussian complementary error function.

		\noindent
		We observe that for $x,y\in \IR $ and $t\ge 0 $:  $\exp(-(|x|+|y|)^2/2t) < \exp(-(x-y)^2/2t) $ and
		\begin{equation}\label{eq_u_bound}
			u_1(t,x,y) - u_2(t,x,y) \le  \frac{1}{2 \sqrt{2 \pi t}} e^{-(x-y)^2/2t}.
		\end{equation}
		The Mills ratio of a Gaussian random variable (see \cite[p.98]{GriSti}) yields  $\erfc(x) \sim e^{-x^2}/x $. Thus, there exists a constant $K_{Mills}>0 $ such that
		\begin{equation}\label{eq_v_bound}
			v_{\rho}(t,x,y) \le K_{Mills} \frac{2\sqrt{2t}}{\rho \braces{|x|+|y|} + 8t} e^{-(|x|+|y|)^2/2t} \le K_{Mills} \frac{1 }{2 \sqrt{2 t}}  e^{-(x-y)^2/2t}.
		\end{equation}
		From \eqref{eq_u_bound} and \eqref{eq_v_bound}, for $K = 1 + K_{Mills} \sqrt{\pi}/2$,
		\begin{equation}\label{eq_sticky_kernel_bound}
			p_{\rho}(t,x,y) \le K \frac{1}{\sqrt{2\pi t}} e^{-(x-y)^2/2t}
		\end{equation}
		and
		\begin{equation}
			|P_t h(x)| \le \int_{\IR} |h(y)|p_{\rho}(t,x,y) \vd y  
			\le K \int_{\IR} |h(y)|\frac{1}{\sqrt{2\pi t}} e^{-(x-y)^2/2t} \vd y.
		\end{equation}
		Observing $e^{-(x-y)^2/2t} \le 1  $ yields \eqref{eq_lem_bound_semigroup_A}.
	\end{proof}

	\section{The case of the sticky Brownian motion}\label{sec_thm1b}

	\begin{theorem}\label{thm_auxiliary_main_result_sbm}
		Let $X^{\rho}$  be the sticky Brownian motion of stickiness $\rho>0 $, defined on a
		family of probability spaces $(\Omega, \process{\bF_t},\Prob_x)_{x\in \IR}$ such that for all $x\in \IR $
		$\Prob_x (X_0 = x) = 1 $.
		Let be $g:\IR \mapsto \IR$ be a bounded Lebesgue-integrable function which vanishes on an open interval around $0$, $(\un)_n $ a sequence that satisfies \eqref{eq_un_condition} and $T: \IR \mapsto \IR$ a continuously differentiable function such that for an $\epsilon >0 $:
		\begin{equation}\label{eq_condition_T}
			T(0)=0, \qquad T'(0)=1, \qquad \epsilon \le T'(x) \le 1/\epsilon, \qquad \bigabsbraces{T''(x)} \le 1/\epsilon,
		\end{equation}
		for every $x\in \IR $.
		For every such function $T$, let $g_n[T] $ be the sequence of functions such that:
		\begin{equation}\label{eq_gn_T_definition_sde}
			g_n[T](x)= g\bigbraces{\un T(\un^{-1} x)},
		\end{equation} 	
		for every $x $ and $n$.
		Then, 
		\begin{equation}\label{eq_auxiliary_main_result_SBM}
			\frac{\un}{n} \sum_{i=1}^{[nt]} g_n[T](\un\hfprocess{X}{\rho}{i-1} )
			\convergence{} 
			\frac{\lambda(g)}{\sigma(0)} \loct{X^{\rho}}{0}{t}.
		\end{equation}
		locally uniformly in time, in 
		$\Prob_x $-probability.
	\end{theorem}
	
	As the proof of Theorem~\ref{thm_auxiliary_main_result_sbm} is rather tedious, we have isolated parts of it in Lemmas~\ref{lem_functional_sup_bound}, \ref{lem_local_time_conv} and \ref{lem_semigroup_step_conv}.
	
	\begin{lemma}\label{lem_functional_sup_bound}
		Let $X^{\rho} $ be the sticky Brownian motion of stickiness $\rho>0 $ and $g$ be an integrable function such that $g(0)=0 $. Then, there exists a constant $K>0 $ such that
		for every $x\in \IR $, $t>0 $ and $n\in \IN $:
		\begin{equation}\label{eq_gn_dilatation_main_result}
			\Esp_x \Bigbraces{\sup_{s\le t} \big|\frac{u_n}{n} \sum_{i=1}^{[ns]}g\bigbraces{u_n \hfprocess{X}{\rho}{i-1}}  \big|  }  \le K \Bigbraces{\frac{u_n}{n}|g(u_n x)| + \lambda(|g|) \sqrt{t}}.
		\end{equation}
		
	\end{lemma}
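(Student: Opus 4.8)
The plan is to pass to expectation and use the semigroup bound from Lemma~\ref{prop_semi_group_bounds} together with the scaling relation of Corollary~\ref{cor_semi_group_scaling}. First I would write $u_n = n^\alpha$ and observe that by the triangle inequality and monotonicity of the partial sums (after replacing $g$ by $|g|$ inside), it suffices to bound
\begin{equation}
	\Esp_x\Bigbraces{\frac{u_n}{n}\sum_{i=1}^{[nt]} \bigabsbraces{g\bigbraces{u_n\hfprocess{X}{\rho}{i-1}}}}
	= \frac{u_n}{n}\sum_{i=1}^{[nt]}\Esp_x\Bigbraces{\bigabsbraces{g\bigbraces{u_n X^\rho_{(i-1)/n}}}},
\end{equation}
since the summands are nonnegative, so the supremum over $s\le t$ is attained at $s=t$ and we may drop it; what remains is exactly the right-hand side of \eqref{eq_gn_dilatation_main_result} up to the argument. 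Here I use the tower property conditioning on $\bF_{(i-1)/n}$ trivially, i.e. each term is just $\Esp_x[\,|g|(u_n X^\rho_{(i-1)/n})\,]$.

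Next, the $i=1$ term is $\frac{u_n}{n}|g(u_n x)|$, matching the first term in the claimed bound. For $i\ge 2$, I would write $\Esp_x[\,|g|(u_n X^\rho_{(i-1)/n})\,] = P^\rho_{(i-1)/n}\bigbraces{|g|(u_n\,\cdot)}(x)$. Since $g$ vanishes on a neighborhood of $0$ we have in particular $|g|(u_n\cdot 0)=0$, so Lemma~\ref{prop_semi_group_bounds} applies to the function $h(\cdot)=|g|(u_n\,\cdot)$ and gives
\begin{equation}
	\bigabsbraces{P^\rho_{(i-1)/n}\bigbraces{|g|(u_n\,\cdot)}(x)} \le K\,\frac{\lambda\bigbraces{|g|(u_n\,\cdot)}}{\sqrt{(i-1)/n}} = \frac{K}{u_n}\,\frac{\lambda(|g|)}{\sqrt{(i-1)/n}},
\end{equation}
where the last equality is the change of variables $\lambda(|g|(u_n\,\cdot)) = u_n^{-1}\lambda(|g|)$. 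Summing, $\frac{u_n}{n}\sum_{i=2}^{[nt]} \frac{K}{u_n}\frac{\lambda(|g|)}{\sqrt{(i-1)/n}} = K\lambda(|g|)\,\frac{1}{n}\sum_{i=1}^{[nt]-1}\frac{1}{\sqrt{i/n}}$, and a standard comparison of this Riemann sum with $\int_0^t s^{-1/2}\,\vd s = 2\sqrt t$ bounds it by $C\lambda(|g|)\sqrt t$ for an absolute constant $C$ (using $\sum_{i=1}^{m} i^{-1/2}\le 2\sqrt m$). Absorbing constants into a single $K$ yields \eqref{eq_gn_dilatation_main_result}.

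I expect the only mild subtlety — not really an obstacle — to be the bookkeeping of where the supremum over $s$ goes: one must note that after replacing $g$ by $|g|$ the process $s\mapsto \frac{u_n}{n}\sum_{i=1}^{[ns]}|g|(u_n X^\rho_{(i-1)/n})$ is nondecreasing, so $\sup_{s\le t}$ equals the value at $t$ and commutes harmlessly with the expectation; without this observation one would otherwise need Doob-type control. The other point to state carefully is that Lemma~\ref{prop_semi_group_bounds} is applied with $\rho\sqrt{\phantom{n}}$-type parameters irrelevantly, because the constant $K$ there is uniform in $\rho$, so no scaling of $\rho$ is needed here — I simply apply it at parameter $\rho$ directly. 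Everything else is the elementary Riemann-sum estimate above.
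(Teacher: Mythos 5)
Your proof is correct and follows essentially the same route as the paper: isolate the $i=1$ term, bound each remaining term via the semigroup estimate of Lemma~\ref{prop_semi_group_bounds} applied to a dilated copy of $|g|$ (whose $L^1$ norm scales as $\lambda(|g|)/u_n$), and conclude with $\sum_{i\le m} i^{-1/2}\le 2\sqrt{m}$. The only difference is cosmetic: the paper first invokes the scaling relation of Corollary~\ref{cor_semi_group_scaling} so as to work with $P^{\rho\sqrt{n}}_{i-1}$ at integer times, whereas you apply the bound directly to $P^{\rho}_{(i-1)/n}$, which is equally valid since the estimate in Lemma~\ref{prop_semi_group_bounds} holds for every $t>0$ with a constant independent of $\rho$.
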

	
	\begin{proof}
		We observe that
		\begin{equation}\label{eq_sum_gn_bounds_A}
			\begin{aligned}
			\Esp_x \Bigbraces{\sup_{s\le t} \big|\frac{u_n}{n} \sum_{i=1}^{[ns]}g\bigbraces{u_n \hfprocess{X}{\rho}{i-1}}  \big|  } 
			&\le \Esp_x \Bigbraces{ \frac{u_n}{n} \sum_{i=1}^{[nt]}\bigabsbraces{g\bigbraces{u_n \hfprocess{X}{\rho}{i-1}}  }  } \\
			&= \frac{u_n}{n}|g(u_n x)| + \frac{u_n}{n} \sum_{i=2}^{[nt]} \Esp_x \bigbraces{\bigabsbraces{g(\un \hfprocess{X}{\rho}{i-1})}}.
			\end{aligned}
		\end{equation}
		Then, if $h_n(x) = g(u_n x/ \sqn) $,
		\begin{equation}\label{eq_gn_dilatations}
			\begin{aligned}
			\sum_{i=1}^{[nt]}h_n\bigbraces{\sqn \hfprocess{X}{\rho}{i-1}} &= 
			\sum_{i=1}^{[nt]}g\bigbraces{u_n \hfprocess{X}{\rho}{i-1}}, 
			& \lambda(|h_n|) &= \frac{\sqn}{u_n}\lambda(|g|), &  h_n(0)&= 0.
			\end{aligned}
		\end{equation}
		From \eqref{eq_sum_gn_bounds_A} and \eqref{eq_gn_dilatations},
		\begin{equation}\label{eq_sum_gn_bounds_B}
			\Esp_x \Bigbraces{\sup_{s\le t} \big|\frac{u_n}{n} \sum_{i=1}^{[ns]}g\bigbraces{u_n \hfprocess{X}{\rho}{i-1}}  \big|  } 
			\le \frac{u_n}{n}|g(u_n x)| + \frac{u_n}{n} \sum_{i=2}^{[nt]} \Esp_x \bigbraces{\bigabsbraces{h_n(\sqn \hfprocess{X}{\rho}{i-1})}}.
		\end{equation}
		As $h_n(0)=0 $, from \eqref{eq_semi_group_scaling} and \eqref{eq_lem_bound_semigroup_A},
		\begin{equation}\label{eq_Ex_widehatgn_semi_group_bound}
			\Esp_x \bigbraces{\bigabsbraces{h_n(\sqn \hfprocess{X}{\rho}{i-1})}} =  P_{i-1}^{\rho \sqn} | h_n(x \sqn) | \le K \frac{\lambda(|h_n|)}{\sqrt{i-1}}.
		\end{equation}
		From \eqref{eq_gn_dilatations}, \eqref{eq_Ex_widehatgn_semi_group_bound} and as $\sum_{i=1}^{[nt]} \frac{1}{\sqrt{i}} \le 2 \sqrt{nt}  $,
		\begin{equation}\label{eq_sum_gn_bounds_C}
			\sum_{i=2}^{[nt]} \Esp_x \bigbraces{\bigabsbraces{h_n(\sqn \hfprocess{X}{\rho}{i-1})}} \le
			2K  \lambda(|h_n|) \sqrt{nt} = 2K  \frac{n}{u_n} \lambda(|g|) \sqrt{t}. 
		\end{equation}
		From \eqref{eq_sum_gn_bounds_B} and \eqref{eq_sum_gn_bounds_C} we get \eqref{eq_gn_dilatation_main_result}.
	\end{proof}

	\begin{lemma}\label{lem_local_time_conv}
		Let $X $ be a sticky Brownian motion with local time $\loct{X}{a}{} $.
		Moreover, let $g$	and $T $ be two real-valued functions such that $g $ is bounded and integrable and $T $ satisfies \eqref{eq_condition_T}.
		Then, for any $t\ge 0 $,
		\begin{equation}\label{eq_loct_integral_convergence_at_zero}
			\int_{\IR} g_n (x) \loct{X}{x/\na}{[nt]/n}\d x \convergence{} \lambda(g) \loct{X}{0}{t},
		\end{equation}
		where $g_n $ is given by \eqref{eq_gn_T_definition_sde}.
	\end{lemma}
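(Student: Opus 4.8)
The plan is to perform the change of variables $y = x/n^{\alpha}$ so that the integral on the left of \eqref{eq_loct_integral_convergence_at_zero} becomes an integral against the variable $y$ living near $0$, then to exploit the almost-sure continuity of $x \mapsto \loct{X}{x}{t}$ near $x = 0$ (which holds since $X$ is a sticky Brownian motion, hence a continuous semimartingale with jointly continuous local times in the spatial variable at fixed time, or at least right-continuous with left limits and continuous at levels that are not sticky), together with the right-continuity of $t \mapsto \loct{X}{0}{t}$. More precisely, after the substitution $x = n^{\alpha} y$ we get
\begin{equation*}
	\int_{\IR} g_n(x) \loct{X}{x/n^{\alpha}}{[nt]/n}\,\d x
	= n^{\alpha} \int_{\IR} g\bigbraces{n^{\alpha} T(y)} \loct{X}{y}{[nt]/n}\,\d y,
\end{equation*}
and a further substitution $z = n^{\alpha} T(y)$, i.e. $y = T^{-1}(n^{-\alpha} z)$ (using that $T$ is a $C^1$ diffeomorphism of $\IR$ by \eqref{eq_condition_T}, with $T(0) = 0$, $T'(0) = 1$ and bounded derivative bounds so that $T^{-1}$ exists and is $C^1$), turns this into
\begin{equation*}
	\int_{\IR} g(z)\, \loct{X}{T^{-1}(n^{-\alpha}z)}{[nt]/n}\, \frac{\d z}{T'\bigbraces{T^{-1}(n^{-\alpha}z)}}.
\end{equation*}

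Next I would argue that the integrand converges pointwise (for each fixed $z$, almost surely) to $g(z)\, \loct{X}{0}{t}$: indeed $T^{-1}(n^{-\alpha}z) \to T^{-1}(0) = 0$ and $T'(T^{-1}(n^{-\alpha}z)) \to T'(0) = 1$ as $n \to \infty$, $[nt]/n \to t$, and then spatial continuity of the local time at $0$ together with right-continuity in time gives $\loct{X}{T^{-1}(n^{-\alpha}z)}{[nt]/n} \to \loct{X}{0}{t}$ almost surely. To pass the limit inside the integral I would invoke dominated convergence: $g$ is bounded with compact-ish tails controlled by integrability, $1/T' \le 1/\epsilon$ is bounded by \eqref{eq_condition_T}, and $\loct{X}{y}{s}$ is, for $y$ in a fixed bounded neighbourhood of $0$ and $s$ in a bounded time interval, bounded by an integrable (indeed almost surely finite, deterministic-in-$z$) random constant — here one can use that $\sup_{|y|\le 1}\sup_{s \le t+1}\loct{X}{y}{s} < \infty$ a.s. (joint continuity / càdlàg regularity of $(y,s)\mapsto \loct{X}{y}{s}$ on the relevant compact), so the dominating function is $\|g\|_\infty \epsilon^{-1}\, \mathbb{1}_{\{|z| \le R\}}\cdot C(\omega) + \|\loct{X}{\cdot}{t+1}\|_{\infty}\,|g(z)|/\epsilon$, which is integrable in $z$. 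This yields convergence $\Prob_x$-almost surely, which in particular implies convergence in $\Prob_x$-probability as claimed.

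The main obstacle I anticipate is the regularity of the local time in the spatial variable: \eqref{eq_local_time_first_definition} only gives a process that is continuous in $t$ for each fixed level $a$, and the jointly-continuous / càdlàg-in-space version (Theorem VI.1.7 of \cite{RevYor}) requires $X$ to be a continuous semimartingale, which holds here — but one must be a little careful because $0$ is a sticky point, so the occupation-time formula has an atom and one should check that the local time at levels near $0$ still behaves continuously as the level tends to $0$. Concretely, I would note that for the sticky Brownian motion $X = B_{\gamma(\cdot)}$ (time-change of a Brownian motion by the lemma of the previous subsection), Lemma~\ref{lem_time_changed_local_time} gives $\loct{X}{a}{t} = \loct{B}{a}{\gamma(t)}$, and since $(a,s) \mapsto \loct{B}{a}{s}$ is jointly continuous (Brownian local time is jointly continuous) and $\gamma$ is non-decreasing and right-continuous, $(a,t)\mapsto \loct{X}{a}{t}$ is continuous in $a$ locally uniformly in $t$ and right-continuous in $t$; this supplies exactly the pointwise convergence and the local boundedness needed above. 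Apart from that, the only remaining care is the bookkeeping of the two substitutions and checking the hypotheses of dominated convergence with the $T'$-factor, which are routine given \eqref{eq_condition_T}.
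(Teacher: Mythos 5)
Your argument is correct, and it rests on the same two pillars as the paper's proof: the $(t,x)$-joint continuity of the sticky Brownian motion's local time (obtained, exactly as you do, from Trotter's theorem for $B$, the time-change representation $X=B_{\gamma(\cdot)}$ and Lemma~\ref{lem_time_changed_local_time}) and the lower bound $T'\ge\epsilon$ from \eqref{eq_condition_T}. The organization differs, though. The paper keeps the integral in the $x$ variable, splits it at $|x|\le q$ and $|x|> q$, treats the compact part by dominated convergence, controls the tail through the change of variables giving $\int_{|x|>q}|g_n|\,\d x\le \epsilon^{-1}\int_{|y|>q}|g|\,\d y$, and then needs the additional step $\lambda(g_n)\to\lambda(g)$ to conclude. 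You instead perform the full substitution $z=n^{\alpha}T(n^{-\alpha}x)$ once, which moves all the $n$-dependence into the spatial argument of the local time and into the Jacobian $1/T'\bigl(T^{-1}(n^{-\alpha}z)\bigr)\le 1/\epsilon$, and a single application of dominated convergence (dominating function $\epsilon^{-1}|g(z)|\sup_{a}\loct{X}{a}{t}$) finishes the proof. Your route is a bit more economical, since the splitting and the separate convergence of $\lambda(g_n)$ become unnecessary; the paper's route avoids inverting $T$.

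Two points you should make explicit. First, the a.s. finiteness of $\sup_{a\in\IR}\loct{X}{a}{t}$, which your dominating function requires, follows from joint continuity together with the fact that $\loct{X}{a}{t}=0$ for $a$ outside the (a.s. bounded) range of $X$ on $[0,t]$; the paper's bound \eqref{eq_local_time_unif_boundedness} relies on the same observation. Second, since $[nt]/n\uparrow t$, mere right-continuity in time (which is all you claim in your hedge about $\gamma$) would not suffice for the pointwise limit; but here $A(t)=t+\tfrac{\rho}{2}\loct{B}{0}{t}$ is continuous and strictly increasing, so $\gamma$ is continuous and the local time of $X$ is genuinely jointly continuous, exactly as the paper asserts, and this is what your pointwise convergence actually uses.
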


	\begin{proof}
		From Trotter's theorem \cite{Tro}, the local time of the standard Brownian motion $ \loct{B}{x}{}$ admits a version that is  $(t,x)$-jointly continuous.
		As the time-change $\gamma $ also admits a continuous version,
		from \eqref{eq_local_time_time_scaling_prop}, the local time $\loct{X}{x}{} $ of $X$
		admits a version that is  $(t,x)$-jointly continuous.
		Thus, 
		\begin{equation}\label{eq_local_time_pointwise_convergence}
			|\loct{X}{x/\na }{[nt]/n} -  \loct{X}{0}{t}| \convergence{} 0,
		\end{equation} 
		for every $t\ge 0$ and $x\in \IR $.
		Moreover, there exists a positive random variable $U$ such that
		\begin{equation}\label{eq_local_time_unif_boundedness}
			|\loct{X}{x/\na }{[nt]/n} -  \loct{X}{0}{t}| \le U,
		\end{equation}
		for every $x$ and $n$.
		Thus, as $g$ is bounded,
		\begin{equation}\label{eq_local_time_integral_decomposition}
			\Bigabsbraces{	\int_{\IR} g_n (x)\bigbraces{ \loct{X}{x/\na }{[nt]/n} - \loct{X}{0}{t} } \d x } 
			\le \norm{g}_{\infty} \int_{|x|\le q} \bigabsbraces{ \loct{X}{x/\na }{[nt]/n} - \loct{X}{0}{t} } \d x
			+ U \int_{|x|> q} \absbraces{g_n(x)} \d x.
		\end{equation}
		From \eqref{eq_local_time_pointwise_convergence}, \eqref{eq_local_time_unif_boundedness} and Lebesgue convergence theorem,
		\begin{equation}\label{eq_local_time_integral_decomposition_part1}
			\int_{|x|\le q} \bigabsbraces{ \loct{X}{x/\na }{[nt]/n} - \loct{X}{0}{t} } 
			\convergence{} 0.
		\end{equation}
		With a change of variables,
		\begin{equation}\label{eq_local_time_integral_decomposition_part2}
			\begin{aligned}
			\int_{|x|> q} \absbraces{g_n(x)} \d x &=   \int_{|x|> q} \absbraces{g(\na T(x/\na))} \d x  \\
			&= \int_{\un T(q/\un)}^{\infty} g(y) \frac{1}{T'(y/\un)} \vd y + \int_{-\infty}^{\un T(-q/\un)} g(y)  \frac{1}{T'(y/\un)} \vd y.
			\end{aligned}
		\end{equation}
		From \eqref{eq_condition_T} and \eqref{eq_local_time_integral_decomposition_part2},
		\begin{equation}
			\limsup_n \int_{|x|> q} \absbraces{g_n(x)} \d x \le 
			\frac{1}{\epsilon} \Bigbraces{ \int_{q}^{\infty} g(y)  \vd  y 
				+ \int_{-\infty}^{-q} g(y)   \vd y} 
		\end{equation}
		which since $g$ is integrable converges to $0$ as $q \rightarrow \infty $.
		From \eqref{eq_local_time_integral_decomposition}, \eqref{eq_local_time_integral_decomposition_part1} and \eqref{eq_local_time_integral_decomposition_part2},
		\begin{equation}\label{eq_gn_full_local_time_convergence}
			\Bigabsbraces{	\int_{\IR} g_n(x)\bigbraces{ \loct{X}{x/\na}{[nt]/n} - \loct{X}{0}{t} } \d x } \convergence{} 0.
		\end{equation}
		Using again the same change of variables as in \eqref{eq_local_time_integral_decomposition_part2},
		\begin{equation}\label{eq_gn_integral_bound_ito_g}
			\int_{\IR} g_n(x)\vd x = \int_{\IR} g\bigbraces{\na T(x/\na)} \vd x 
			= \int_{\IR} g(x) \frac{1}{T' \bigbraces{T^{-1}(x/\na)}} \vd x.
		\end{equation}
		Thus, as $g $ is integrable and $T'(x) \ge \epsilon $ for every $x \in \IR $, from Lebesgue convergence theorem,
		\begin{equation}\label{eq_gn_integral_convergence}
			\int_{\IR} g_n(x)\vd x \convergence{} 	\int_{\IR} g(x)\vd x.
		\end{equation}
		Equations \eqref{eq_gn_full_local_time_convergence} and \eqref{eq_gn_integral_convergence} yield
		\eqref{eq_loct_integral_convergence_at_zero}.
	\end{proof}

	\begin{lemma}\label{lem_semigroup_step_conv}
		Let $T_{n}$ be the functional defined for each real-valued function $h$ and $x\in \IR $ by
		\begin{equation}
			\Top{n}{h}(x) = \int_{0}^{1} (P_{ \nta s/n}^{\na \rho } h(x) - h(x) )   \d s,
		\end{equation}
		where $\process{P^{\na \rho}_t} $ is the semi-group of the sticky Brownian motion of stickiness parameter $\na \rho $. 
		Then, for every bounded integrable Lipschitz function $k$ such that $k$ vanishes on an open interval around $0$,
		\begin{equation}\label{eq_Tnk_total_convergence}
			\lambda(|\Top{n}{k_{n}}|) \convergence{} 0,
		\end{equation} 
		where $k_{n}(x) = k(\na T(x/\na)) $.
	\end{lemma}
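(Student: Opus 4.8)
The plan is to bound $\lambda(|\Top{n}{k_n}|)$ by splitting the region of integration according to the position of $x$ relative to the support of $k_n$, and to exploit two facts: (i) by Lemma~\ref{prop_semi_group_bounds}, since $k_n(0)=0$ (because $T(0)=0$ and $k$ vanishes near $0$), we have $|P^{\na\rho}_{t}k_n(x)| \le K\lambda(|k_n|)/\sqrt t$ uniformly in $x$; and (ii) $k$ is Lipschitz, so $k_n$ is Lipschitz with constant of order $n^{-\alpha}\cdot n^{\alpha}\cdot\mathrm{Lip}(k)\cdot\|T'\|_\infty = O(1)$, and in particular $|k_n(x)|$ is controlled near its support. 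First I would record the scaling identities: by the change of variables used in \eqref{eq_gn_integral_bound_ito_g}, $\lambda(|k_n|) = \int_{\IR}|k(y)|/T'(T^{-1}(y/\un))\,\d y \le \epsilon^{-1}\lambda(|k|)$, so $\lambda(|k_n|)$ is bounded uniformly in $n$; moreover, since $k$ vanishes on a neighborhood of $0$, $k_n$ vanishes on a neighborhood of $0$ whose half-width is of order $\na$ (using $T(0)=0$, $T'(0)=1$, and $\epsilon\le T'\le1/\epsilon$), and $k_n$ is supported in $\{|x|\le C\na\}$ for a constant $C$ depending on $\mathrm{supp}(k)$ and $\epsilon$.

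Next I would estimate $|\Top{n}{k_n}(x)|$. Write $\Top{n}{k_n}(x) = \int_0^1 (P^{\na\rho}_{\nta s/n}k_n(x) - k_n(x))\,\d s$. For $x$ \emph{outside} the support of $k_n$, $k_n(x)=0$, so $|\Top{n}{k_n}(x)| \le \int_0^1 |P^{\na\rho}_{\nta s/n}k_n(x)|\,\d s$. Here I would split the $s$-integral at a threshold $s_0$: for $s\ge s_0$ use fact (i) to get $|P^{\na\rho}_{\nta s/n}k_n(x)| \le K\lambda(|k_n|)\sqrt{n}/\sqrt{\nta s}$, which after integrating in $s$ contributes $O(\lambda(|k_n|)\sqrt{n}/\sqrt{\nta}) = O(\lambda(|k_n|)n^{(1-\alpha-1)/2})\to 0$ since $\alpha\in(0,1/2)$ forces $\nta = n^{1+\alpha}/n\cdot$… — more precisely $\nta s/n$ should be read as the appropriate time, and the decay $1/\sqrt{\text{time}}$ gives a vanishing prefactor; for $s<s_0$ one uses that the process started outside $\mathrm{supp}(k_n)$ needs to travel a macroscopic (order $\na$) distance, and bounds the transition probability of reaching the support of $k_n$ in time $O(\nta/n)$ using the Gaussian bound \eqref{eq_sticky_kernel_bound}, giving an exponentially small contribution. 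Integrating $|\Top{n}{k_n}(x)|$ over this outside region (of Lebesgue measure $O(1)$ plus tails) then yields a bound tending to $0$. For $x$ \emph{inside} the support of $k_n$ (a set of Lebesgue measure $O(\na)$), I would bound $|\Top{n}{k_n}(x)| \le \int_0^1|P^{\na\rho}_{\nta s/n}k_n(x)|\,\d s + |k_n(x)| \le K'\lambda(|k_n|) + \|k\|_\infty$ crudely, so the contribution to $\lambda(|\Top{n}{k_n}|)$ from this region is $O(\na)\cdot O(1) = O(\na)$ — and this does \emph{not} vanish, so the crude bound is insufficient on the support and must be refined.

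The refinement, and the \textbf{main obstacle}, is controlling $\Top{n}{k_n}$ on (and near) the support of $k_n$, i.e. showing that $P^{\na\rho}_{\nta s/n}k_n(x) - k_n(x)$ is small \emph{on average in $x$} even where $k_n(x)\ne0$. The natural route is to write $P^{\na\rho}_{t}k_n(x) - k_n(x) = \int_{\IR}(k_n(y)-k_n(x))\,p_{\na\rho}(t,x,y)\,m(\d y)$ and use the Lipschitz property of $k_n$ together with the heat-kernel bound \eqref{eq_sticky_kernel_bound}: $|k_n(y)-k_n(x)| \le \mathrm{Lip}(k_n)|y-x|$ with $\mathrm{Lip}(k_n)=O(1)$, and $\int|y-x|\,p_{\na\rho}(t,x,y)\,\d y = O(\sqrt t) = O(\sqrt{\nta/n})$; the atom at $0$ contributes only when $x$ and $0$ are both within distance $O(\sqrt t)$, a set of measure $O(\sqrt t)$, times a bounded factor. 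Thus $|\Top{n}{k_n}(x)| = O(\sqrt{\nta/n})$ \emph{uniformly}, so $\lambda(|\Top{n}{k_n}|) \le O(\na)\cdot O(\sqrt{\nta/n})$ from the support plus the vanishing outside contribution; since $\nta/n$ should be $o(1)$ for the relevant normalization (this is where the precise meaning of $\nta$ and the constraint $\alpha<1/2$ enter), the whole bound tends to $0$. I would need to be careful that the time argument $\nta s/n$ is exactly the one for which $\na\cdot$(this time)$^{1/2}\to 0$ — equivalently that $n^{2\alpha}\cdot(\nta/n)\to 0$ — which is precisely the regime $\alpha\in(0,1/2)$ guarantees; reconciling the Lipschitz-in-$x$ smallness with the $1/\sqrt t$ blow-up of the sup-norm bound at small $t$ (choosing which bound to use on which $(s,x)$-region) is the delicate bookkeeping, but no single estimate is deep.
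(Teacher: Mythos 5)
Your overall mechanism on the support region (Lipschitz continuity of $k_n$ giving $|P^{\na\rho}_{t}k_n(x)-k_n(x)|=O(\sqrt{t})$ with $t=\nta s/n=s\,n^{2\alpha-1}\to0$, plus a separate treatment of the atom at $0$ using that $k_n$ vanishes near $0$) is close in spirit to the paper's proof, which bounds $\lambda(|\Top{n}{k_{n}}|)$ via Jensen, the Gaussian kernel bound \eqref{eq_sticky_kernel_bound} and Fubini, reducing everything to $\Esp \bigsqbraces{\int_{\IR}|k_{n}(x+\na\sqrt{s/n}\,Z)-k_{n}(x)|\,\d x}\to0$ (Lipschitz plus dominated convergence) together with an atom term. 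However, your write-up contains concrete errors that break the argument as stated.

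First, the scaling of $k_n$ is misread. Since $k_n(x)=k(\na T(x/\na))$ with $\na=n^{\alpha}\to\infty$, $T(0)=0$ and $\epsilon\le T'\le1/\epsilon$, if $k$ vanishes outside $\{\delta\le|y|\le1/\delta\}$ then for all large $n$ the support of $k_n$ lies in the \emph{fixed} annulus $\{\epsilon\delta/2\le|x|\le 2/(\epsilon\delta)\}$: it is an order-one set bounded away from $0$ uniformly in $n$, not a set of measure $O(\na)$ inside $\{|x|\le C\na\}$, and the interval around $0$ on which $k_n$ vanishes has order-one half-width, not order $\na$. This matters twice: your final bookkeeping $\lambda(|\Top{n}{k_{n}}|)\le O(\na)\cdot O(\sqrt{\nta/n})=O(n^{2\alpha-1/2})$ tends to $0$ only when $\alpha<1/4$, and your claimed equivalence ``$n^{2\alpha}\cdot(\nta/n)\to0$ holds for $\alpha\in(0,1/2)$'' is false since $n^{2\alpha}\cdot\nta/n=n^{4\alpha-1}$; with the correct order-one support, the same Lipschitz estimate gives $O(1)\cdot O(n^{\alpha-1/2})\to0$ for every $\alpha\in(0,1/2)$, and the atom term is killed by a factor of the form $\sqn\,e^{-c\,n^{1-2\alpha}}$ exactly as in the paper, using that the support stays away from $0$. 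Second, the treatment of $x$ outside the support via Lemma~\ref{prop_semi_group_bounds} is backwards: the time argument $\nta s/n$ goes to $0$, so the bound $K\lambda(|k_n|)/\sqrt{t}$ blows up like $n^{1/2-\alpha}$ and cannot give a ``vanishing prefactor''; what controls the far field is Gaussian decay in $x$ of $P^{\na\rho}_{t}k_n(x)$ at small times, and one must still integrate in $x$ over an unbounded region, which you only gesture at. The paper's Fubini argument sidesteps the region splitting entirely; if you keep your splitting, replace the semigroup sup-norm bound by a Gaussian-tail estimate outside a fixed compact neighbourhood of the (order-one) support, and your argument then goes through for all $\alpha\in(0,1/2)$.
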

	
	\begin{proof}
		From Jensen's inequality,
		\begin{equation}\label{eq_bound_integral_TnKnp}
			\lambda(|\Top{n}{k_{n}}|) 
			\le \int_{0}^{1} \int_{\IR}  \bigabsbraces{P_{ \nta s/n}^{\na \rho } k_{n}(x) - k_{n}(x) }   \d x \d s.
		\end{equation}
		From \eqref{eq_sticky_kernel_bound},
		\begin{equation}
			\begin{aligned}
			& \bigabsbraces{P_{ \nta s /n}^{\na \rho } k_{n}(x) - k_{n}(x) }   
			\\
			&\qquad \le \int_{\IR} |k_{n}(y) - k_{n}(x)| p_{\na \rho}( \nta s /n,x,y) m(\d y) \\
			&\qquad=  \int_{\IR} |k_{n}(y) - k_{n}(x)| p_{\na \rho}(  \nta s /n,x,y) \d y 
			+ \frac{\na \rho}{2} |k_{n}(x)| p_{\na \rho}( \nta s /n,x,0)\\
			&\qquad\le K \Bigsqbraces{ \int_{\IR} |k_{n}(y) - k_{n}(x)| \frac{1}{\na \sqrt{2\pi  s/ n}} e^{-(x-y)^2 n/2 s \nta} \d y + |k_{n}(x)| \frac{\sqn \rho}{2 \sqrt{2 \pi s}} e^{-x^2 n/ 2  s\nta }},
			\end{aligned}
		\end{equation}
		where $ \frac{1}{\na \sqrt{2\pi  s/ n}} e^{-(x-y)^2 n/2 s \nta} $ is the probability density function of a Gaussian $\bN\bigbraces{x, \nta s/n} $.
		Thus from positive Fubini,
		\begin{equation}\label{eq_integral_Rnknp_bound_decomp}
			\begin{aligned}
			\int_{\IR} \bigabsbraces{P_{ \nta s/ n}^{\na \rho } k_{n}(x) - k_{n}(x) } \d x   	\le & K \biggsqbraces{  \Esp \Bigsqbraces{ \int_{\IR} \bigabsbraces{ k_{n}(x +\na \sqrt{  s/ n} Z) - k_{n}(x) } \d x } \\
				&+ \int_{\IR} |k_{n}(x)| \frac{\sqn \rho}{2 \sqrt{2 \pi s}} e^{-x^2 n / 2  s \nta}  \d x  }
			\end{aligned}
		\end{equation}
		where $Z \sim \bN(0,1) $ under $\Prob_x $.
		For the first additive term of right-hand side of \eqref{eq_integral_Rnknp_bound_decomp},
		with the same argument as \eqref{eq_gn_integral_bound_ito_g},
		\begin{equation}
			\int_{\IR} k_n(x) \vd x \le \frac{1}{\epsilon} \int_{\IR} k(x) \vd x
		\end{equation}
		Thus, for every $\omega \in \Omega $,
		\begin{equation}\label{eq_Tnk_uniform_bound}
			\int_{\IR} \bigabsbraces{ k_{n}(x +\na \sqrt{  s/ n} Z) - k_{n}(x) } \d x \le
			\int_{\IR} \bigabsbraces{k_{n}(x +\na \sqrt{  s/ n} Z)} \vd x 
			+ \int_{\IR} \bigabsbraces{k_{n}(x)} \vd x \le \frac{2}{\epsilon} \lambda(|k|). 
		\end{equation}
		Moreover since $T \in C^1 $ and $k$ is Lipschitz, $\Prob_x $-almost surely,
		\begin{equation}\label{eq_Tnk_pointwise_convergence}
			\begin{aligned}
			\bigabsbraces{ k_{n}(x +\na \sqrt{  s/ n} Z) - k_{n}(x) } &= 
			\bigabsbraces{ k(\na T(x/\na + \sqrt{  s/ n} Z)) - k(\na T(x/\na)) } 
			\\ &\le
			\bigabsbraces{ k(\na T(x/\na) + \sqrt{  s/ n} Z\|T'\|_{\infty}) - k(\na T(x/\na)) } 
			\\ &\le
			|k|_{\Lip}   \sqrt{  s/ n} Z \|T'\|_{\infty},
			\end{aligned}
		\end{equation}
		which converges to $0$ as $n \rightarrow \infty $.
		From \eqref{eq_Tnk_uniform_bound}, \eqref{eq_Tnk_pointwise_convergence} and Lebesgue convergence theorem,
		\begin{equation}\label{eq_convergence_Tnk_expectancy_term}
			\Esp \Bigsqbraces{ \int_{\IR} \bigabsbraces{ k_{n}(x +\na \sqrt{  s/ n} Z) - k_{n}(x) } \d x } \convergence{} 0.
		\end{equation}
		For the second additive term of right-hand side of \eqref{eq_integral_Rnknp_bound_decomp}:
		let $\delta > 0 $ be a positive real number such that $k(x)=0 $ for every $x\ge 0 $ such that $x \notin (\delta, 1/\delta) $. 
		From \eqref{eq_condition_T}, $T$ is strictly increasing, thus,
		\begin{equation}
			\delta \le \un T(x/\un) \le 1/\delta,
		\end{equation}
		is equivalent to
		\begin{equation}
			\un	T^{-1}( 	\delta / \un ) \le x \le \un T^{-1}(1 /\un \delta).
		\end{equation}
		From \eqref{eq_condition_T},
		\begin{equation}
			\begin{split}
				\liminf_{n} \un	T^{-1}( 	\delta / \un ) &\ge \delta \epsilon,\\
				\limsup_{n} \un T^{-1}(1 /\un \delta) &\le 1/\delta \epsilon. \\
			\end{split}
		\end{equation}
		Thus, there exists $n_0 \in \IN $ such that for every $n\ge n_0 $, $\supp{k_n}\subset (\epsilon \delta /2, 2/\epsilon \delta) $.
		Thus, since $k$ is bounded,
		\begin{equation}\label{eq_bound_Tnh_singular_term}
			\begin{aligned}
			\int_{\IR} |k_{n}(x)| \frac{\sqn \rho}{2 \sqrt{2 \pi s}} e^{-x^2n / 2  s \nta}  \d x 
			&\le
			2 \norm{k}_{\infty} \int_{\epsilon \delta /2}^{2 / \epsilon \delta} \frac{\sqn \rho}{2 \sqrt{2 \pi s}} e^{-x^2n / 2  s \nta}  \d x
			\\ &\le \frac{\rho \norm{k}_{\infty} }{ \sqrt{2 \pi s}} \frac{2}{\epsilon \delta} \sqn e^{-(n/\nta) (\epsilon \delta)^2 / 8s },
			\end{aligned}
		\end{equation}
		which converges to $0$ as $n\rightarrow \infty $.
		From \eqref{eq_bound_integral_TnKnp}, \eqref{eq_integral_Rnknp_bound_decomp}, \eqref{eq_convergence_Tnk_expectancy_term} and \eqref{eq_bound_Tnh_singular_term}, the convergence \eqref{eq_Tnk_total_convergence} is proven.
	\end{proof}
	
	\begin{proof}[Proof (of Theorem~\ref{thm_auxiliary_main_result_sbm})]
		Let $X $ be the sticky Brownian motion of parameter $\rho > 0 $, $\process{P_{t}^{\rho}} $ its semi-group and $\loct{X}{x}{} $ its local time at $x$.
		From the occupation times formula and the characterization of $\qvsingle{X^{\rho}}_t $ in \cite{EngPes},
		\begin{equation*}
			\int_{0}^{t} f(X_s) \indic{X_s \ne 0} \vd s  = \int_{\IR} f(y) \loct{X}{y}{t}\vd y.
		\end{equation*}	
		Thus, if $f(0)=0 $,
		\begin{equation}\label{eq_occ_tim_formula_special}
			\int_{0}^{t} f(X_s) \vd s =  \int_{\IR} f(y) \loct{X}{y}{t}\vd y.
		\end{equation}
		By applying consecutive change of variables and from \eqref{eq_occ_tim_formula_special}, 
		\begin{equation}\label{eq_gn_occupation_time_equality}
			\begin{aligned}
			\int_{\IR} g_n(x) \loct{X}{ x/ \na}{[nt]/n}\d x 
			&= \na \int_{\IR} g_n( \na x) \loct{X}{x}{[nt]/n}\d x 
			\\ &= \na \int_{0}^{[nt]/n} g_n(\na X_s) \d s
			= \frac{\na}{n} \int_{0}^{[nt]} g_n(\na X_{s/n}) \d s.
			\end{aligned}
		\end{equation}
		Thus,
		\begin{equation}\label{eq_vn_decomposition}
			\begin{aligned}
				\frac{\na}{n} \sum_{i=1}^{[nt]} g_n (\na \hfprocesstrue{X}{i-1}) 
				=&\frac{\na}{n} \sum_{i=1}^{[nt]} g_n (\na \hfprocesstrue{X}{i-1})  - 
				\frac{\na}{n} \int_{0}^{[nt]} g_n (\na X_{s/n}) \d s 
				+ \int_{\IR} g_n (x) \loct{X}{x/\na}{[nt]/n}\d x \\
				=& \sum_{i=1}^{[nt]} \frac{\na}{n}    \int_{0}^{1} \bigbraces{  g_n (\na X_{\frac{i-1}{n} }) -   g_n (\na X_{\frac{i-1}{n} + \frac{s}{n} })   }  \d s \\
				& +  \frac{\na}{n} \int_{0}^{nt - [nt]} \bigbraces{ 
					g_n (\na X_{\frac{ [nt]}{n} })   - g_n (\na X_{\frac{ [nt]}{n} + \frac{s}{n} }) } \vd s\\
				& +  \int_{\IR} g_n (x) \loct{X}{x/\na}{[nt]/n}\d x.\\
			\end{aligned}
		\end{equation}
		\noindent
		For the second additive term at the right hand side of \eqref{eq_vn_decomposition},
		\begin{multline}\label{eq_remainder_term_convergence}
			\Bigabsbraces{ \frac{\na}{n} \int_{0}^{nt - [nt]} \bigbraces{ 
					g_n(\na X_{\frac{ [nt]}{n} })   - g_n(\na X_{\frac{ [nt]}{n} + \frac{s}{n} }) } \vd s } \\
			\le 
			\frac{\na}{n} \int_{0}^{nt - [nt]} \bigabsbraces{ 
				g_n(\na  X_{\frac{ [nt]}{n} })   - g_n(\na X_{\frac{ [nt]}{n} + \frac{s}{n} }) } \vd s 
			\le 2\norm{g}_{\infty} \frac{\na}{n},
		\end{multline}
		which converges to $0$ as $n \rightarrow \infty $.
		
		\noindent
		For the first additive term at the right hand side of \eqref{eq_vn_decomposition}, let
		\begin{equation}
			\begin{aligned}
			A^{n}_{t} &:= \sum_{i=1}^{[nt]} \frac{\na}{n}    \int_{0}^{1} \bigbraces{ g_n (\na X_{\frac{i-1}{n} + \frac{s}{n} }) -  g_n (\na X_{\frac{i-1}{n} })   }  \d s,
			\\
			B^{n}_{t} &:= \sum_{i=1}^{[nt]} \frac{\na}{n}    \int_{0}^{1} \Esp_x \Bigbraces{ g_n (\na X_{\frac{i-1}{n} + \frac{s}{n} }) -  g_n (\na X_{\frac{i-1}{n} }) \Big| \hfprocesstrue{\bF}{i-1}   }  \d s.
			\end{aligned}
		\end{equation}
		As the cross terms have expectancy $0$, from Minkowski's inequality,
		\begin{equation}\label{eq_convergence_A_B}
			\begin{aligned}
			\Esp_x \bigbraces{ |	A^{n}_{t}  - B^{n}_{t}  |^2 } &\le 2 \frac{\nta}{n^2} \sum_{i=1}^{[nt]} \Esp_x  \Bigbraces{  \int_{0}^{1} \bigbraces{ g_n (\na X_{\frac{i-1}{n} + \frac{s}{n} }) -  g_n (\na X_{\frac{i-1}{n} })   }  \d s }^2 
			\\ &\le 4 \norm{g}_{\infty} \frac{\nta}{n} \frac{[nt]}{n},
			\end{aligned}
		\end{equation}
		which converges to $0$ as $n \rightarrow \infty $. 
		Thus, $A^{n}_{t}  - B^{n}_{t}  $ converges to $0$ in $L^2(\Prob_x) $ and consequently in $L^1(\Prob_x) $. 
		As such, proving that $A^{n}_{t} \convergence{} 0$ in $L^1(\Prob_x) $ is equivalent to proving that  $B^{n}_{t} \convergence{} 0$ in $L^1(\Prob_x) $. 
		To prove the latter, we define for each real-valued function $h$ the functional,
		\begin{equation}
			\Top{n}{h}(x) = \int_{0}^{1} (P_{ \nta s/n}^{\na \rho } h(x) - h(x) )   \d s.
		\end{equation}
		From \eqref{eq_rescaled_process},
		\begin{equation}
			B^{n}_{t} = \frac{\na }{n} \sum_{i=1}^{[nt]} \Top{n}{g_n}(\na \hfprocesstrue{X}{i-1} ).
		\end{equation}
		From \eqref{eq_gn_dilatation_main_result}, there exists a constant $K'>0$ that does not depend on $n$ or $\rho $ such that
		\begin{equation}
			\Esp_x \bigbraces{|B^{n}_{t}| } \le K' \bigbraces{\frac{u_n}{n} |\Top{n}{g_n}(\un x)| + \sqrt{t} \lambda(|\Top{n}{g_n}|)  }.
		\end{equation}
		From \eqref{eq_gn_T_definition_sde} and \eqref{eq_semi_group_definition}, for every $t>0$ and $\rho>0 $, 
		\begin{equation}
			|P^{\rho}_{t}g_n(x)|
			\le \norm{g}_{\infty},
		\end{equation}
		By taking $K'' = K'(2\norm{g}_{\infty} \vee 1)  $,
		\begin{equation}\label{eq_Bn_total_bound}
			\Esp_x \bigbraces{|B^{n}_{t}| } \le K'' \bigbraces{\frac{\na}{n} + \sqrt{t} \lambda(|\Top{n}{g_n}|)  }.
		\end{equation}
		Thus, as $0<\alpha <1/2$,  it remains to prove that $\lambda(|\Top{n}{g_n}|) \rightarrow 0 $.
		For this we use a Lipschitz approximation of $g$.
		In particular, as $g$ is bounded and in $L^1(\d x) $, for each $p$ it is possible to find a Lipschitz function $k_p $ such that, $k_p(0)=0 $ and $\lambda(|g - k_p|)< 1/p $ (see the proof of \cite[Lemma~4.5]{Jac98}). 
		Let $k^{n}_{p}(x) = k_{p}(\na T(x/\na)) $, from \eqref{eq_condition_T},
		\begin{equation}\label{eq_difference_gn_knp_bound}
			\lambda(|g_n - k^{n}_p|)< 1/p\epsilon.
		\end{equation}
		Let $p_{\rho}(t,x,y) $ be the sticky Brownian motion transition kernel given in \eqref{eq_sticky_kernel_decomposition_A}.
		As $p_{\rho}(t,x,y)=p_{\rho}(t,y,x) $ for every $x,y \in \IR $,
		\begin{equation}\label{eq_difference_semigroup_gn_knp_bound}
			\begin{aligned}
			\lambda(|P^{\rho}_t g_n- P^{\rho}_t k^{n}_p|) &\le	\int_{\IR} \int_{\IR} \bigabsbraces{g_n(y) - k^{n}_p(y)}p_{\rho}(t,x,y)   \vd y   \vd x  
			\\ &=
			\int_{\IR} \bigabsbraces{g_n(y) - k^{n}_p(y)} \int_{\IR}  p_{\rho}(t,y,x)  \vd x  \vd y  
			\\ & \le \int_{\IR} \bigabsbraces{g_n(y) - k^{n}_p(y)} \vd y = \lambda(|g_n-k^{n}_p|).
			\end{aligned}
		\end{equation}
		From \eqref{eq_difference_gn_knp_bound} and \eqref{eq_difference_semigroup_gn_knp_bound},
		\begin{equation}\label{eq_Tng_bounded_by_Tnkp}
			\lambda(|\Top{n}{g_n}|) \le \frac{2}{p\epsilon} + 	\lambda(|\Top{n}{k^{n}_p}|).
		\end{equation}
		Thus, from \eqref{eq_Tnk_total_convergence},
		\begin{equation}\label{eq_Tng_convergence}
			\lambda(|\Top{n}{g_n}|) \convergence{} 0 .
		\end{equation}
		From \eqref{eq_convergence_A_B}, \eqref{eq_Bn_total_bound} and \eqref{eq_Tng_convergence},
		\begin{equation}\label{eq_An_convergence}
			A^{n}_{t} = \sum_{i=1}^{[nt]} \frac{\na}{n}    \int_{0}^{1} \bigbraces{ g_n(\na X_{\frac{i-1}{n} + \frac{s}{n} }) -  g_n(\na X_{\frac{i-1}{n} })   }  \d s \convergence{\Prob_x} 0.
		\end{equation}
		From \eqref{eq_loct_integral_convergence_at_zero}, \eqref{eq_vn_decomposition},  \eqref{eq_remainder_term_convergence} and \eqref{eq_An_convergence}, for every $t\ge 0 $,
		\begin{equation}
			\frac{\un}{n} \sum_{i=1}^{[nt]} g_n(\un \hfprocesstrue{X}{i-1}) \convergence{\Prob_x} \lambda(g) \loct{X}{0}{t}.
		\end{equation}
		If $g $ is a positive function the processes $\frac{\un}{n} \sum_{i=1}^{[nt]} g_n(\un \hfprocesstrue{X}{i-1}) $ are non-decreasing with $\Prob_x$-almost surely, a continuous limit. Thus, from Lemma~\ref{lem_uniform_in_time_in_probability_conv}, the convergence is locally uniform in time, in probability.
		For an arbitrary $g$ satisfying the conditions of Theorem~\ref{thm_auxiliary_main_result_sbm}, let $g = g^{+} - g^{-} $, where $g^{+}(x) = \max\{g(x),0\} $ and $g^{+}(x) = \max\{-g(x),0\} $.
		Then as $g^{+} $ and $g^{-} $ are both positive function and thus,
		\begin{equation}
			\frac{\un}{n} \sum_{i=1}^{[nt]} g^{+}_n(\un \hfprocesstrue{X}{i-1}) \convergence{} \lambda(g^{+}) \loct{X}{0}{t}, 
			\qquad \frac{\un}{n} \sum_{i=1}^{[nt]} g^{-}_n(\un \hfprocesstrue{X}{i-1}) \convergence{} \lambda(g^{-}) \loct{X}{0}{t},
		\end{equation}
		locally uniformly in time, in $\Prob_x$-probability.
		Using the triangle inequality for the absolute value and the $L^{\infty}(0,t)$-norm, the locally uniform convergence of \eqref{eq_auxiliary_main_result_SBM} in $\Prob_x $-probability is proven.
	\end{proof}

	\section{Sticky Itô diffusions}\label{sec_thm2}

	\subsection{Analytical characterization}\label{ssec_SID_def}
	
	In this section, we define sticky Itô diffusions via scale and speed.
	In general, one can describe the law of any regular one-dimensional diffusion process $X$ with state-space $ \II$ by a pair $(s,m)$ comprised of a continuous increasing function $s: \II \mapsto \IR $ and a strictly 
	positive\footnote{
		We say that a measure $m$ on $(\II,\mathcal{B}(\II))$ is strictly positive iff $m\bigbraces{(a,b)}>0 $ for any open interval $(a,b)\subset \II $.
	}, locally finite measure $m $ over $\II $ (see \cite[Section~VII.3]{RevYor}).
	The function $s $ is called scale function and expresses the propensity of the process to follow a certain direction (up or down).
	It can be defined as the function such that for each $a<b $ such that $a,b \in \II $,
	\begin{equation}\label{eq_def_scale_function}
		\Prob_x (\tau_b< \tau_a)= \frac{s(x)-s(a)}{s(b)-s(a)},
	\end{equation}
	where $\tau_{\zeta} = \inf\{s>0 : X_s = \zeta\}  $.
	The measure $m(\rd x) $ is called speed measure and expresses the speed at which the process moves and in particular the speed at which the process exits compact intervals.
	It can be defined as the unique positive locally finite measure $m $ such that for each $a<b $ with $a,b \in \II $,
	\begin{equation}\label{eq_def_speed_measure}
		\Esp_x(\tau_{ab})=\int_{(a,b)}\dfrac{\big((s(y\wedge x))-s(a)\big)\big(s(b)- (s(y\vee x))\big)}{s(b)-s(a)} m(\rd y).
	\end{equation}
	
	We consider the SDE 
	\begin{equation}
		\label{eq_SDE_nonsticky}
		\vd X_t = \mu(X_t) \vd t + \sigma(X_t) \vd B_t,
	\end{equation}
	where $ (\mu,\sigma)$ is a pair of functions that satisfy Condition \ref{cond_drift_volatility_functions} and $B$ is a standard Brownian motion.
	Then, the scale function and speed measure $(s,m) $ of the solution to~\eqref{eq_SDE_nonsticky} are 
	(see \cite[p.17]{BorSal})
	\begin{equation}\label{eq_dm_nosticky}
		\begin{aligned}
		s'(x) &=e^{-\int_{a}^{x}\frac{2\mu (u)}{\sigma^2 (u)}du},
		& m(\rd x) &= \frac{1}{s'(x)}\frac{2}{\sigma^2(x)}\vd x,
		& x&\in \IR,
		\end{aligned}
	\end{equation}
	with $s'$ being the right-derivative of $s$.
	
	The $(s,m)$-formulation is particularly adapted for studying sticky one-dimensional diffusions. 
	A point $\zeta$ of the state space is sticky if and only if $m(\{\zeta \})>0$.
	Moreover, since the $(s,m)$-formulation is local, one can characterize a sticky process in $(s,m)$-terms by adding an atom to the speed measure of the non-sticky version of this process.
	For example, the sticky Brownian motion of stickiness $\rho >0 $ is the diffusion process defined through scale and speed $(s,m) $, where
	\begin{equation}\label{eq_sticky_Brownian_motion_characterization}
		\begin{aligned}
			s(x)&= x,   &m(\d x) &= 2\d x + \rho \delta_0 (\d x),
			& x&\in \IR. 
		\end{aligned}
	\end{equation}
	We note that $s_{0}(x) = x $ and $m_{0}(\d x) = 2 \d x $ are the scale function and speed measure of the standard Brownian motion.
	Similarly, the $(\rho,\mu,\sigma)$-SID solution is the diffusion process 
	with $s$ and $m$ given by 
	\begin{equation}\label{eq_dm}
		\begin{aligned}
		s'(x) &= e^{-\int_{a}^{x}\frac{2\mu (u)}{\sigma^2 (u)}du},
		&m(\rd x) &= \frac{1}{s'(x)}\frac{2}{\sigma^2(x)}\vd x + \frac{\rho}{s'(0)} \delta_0(\d x),
		& x&\in \IR.
		\end{aligned}
	\end{equation}
	
For simplicity, we assume $s(0)=0 $, as scale functions are defined up to an affine transformation. Indeed, from~\eqref{eq_def_scale_function},~\eqref{eq_def_speed_measure}, we observe that if $(s,m)$ is the scale and speed of a diffusion $X$, then all pairs
\begin{equation}
	\begin{aligned}
		&(\alpha s+ \beta, \frac{1}{\alpha}m),
		& \alpha&>0
		& \beta&\in \IR
	\end{aligned}
\end{equation}
are also scale and speed of $X$.

	\subsection{Path-wise formulation}\label{ssec_pathwise_description}
	
	In this section we prove that
	\begin{enumerate}
		\item  the solution of \eqref{eq_sde_part}-\eqref{eq_loc_tim_part} is a $(\rho,\mu,\sigma)$-SID,
		\item  if $X$ is a $(\rho,\mu,\sigma)$-SID defined on a probability space $\bP $, then there exists a Brownian motion $B$ defined on an extension of $\bP $ such that $X$ solves \eqref{eq_sde_part}-\eqref{eq_loc_tim_part}.
	\end{enumerate}
	
	This generalizes results found in \cite{EngPes},\cite{Nie2} for the sticky Brownian motion and
	the Ornstein-Uhlenbeck with sticky reflection respectively.
	The main challenge with generalizing these results is that if $\sigma$ is non-constant, then
	the quadratic variation of a solution of \eqref{eq_sde_part}-\eqref{eq_loc_tim_part} is non-trivial. 
	This can be surmounted using the interchangeability between integration and time-changes (see \cite[Proposition~V.1.5]{RevYor}).
	This requires that the non-delayed SDE \eqref{eq_classic_SDE_formulation} has a strong solution.
	
	To avoid this hypothesis, we will use another approach which consists in identifying the law of the time-changed process.

	\begin{theorem}\label{prop_stichy_sde_solution_diffusion}
		We consider the following system
		\begin{align}
			\vd X_t	&= \mu(X_t) \indic{X_t \ne 0} \vd t + \sigma(X_t) \indic{X_t \ne 0} \vd B_t  , \label{eq_sde_part}\\
			\indic{X_t = 0}	\vd t &= \frac{\rho}{2}\vd \loct{X}{0}{t} , \label{eq_loc_tim_part}
		\end{align}
		where $B $ is a standard Brownian motion, $ \loct{X}{0}{}  $ is the local time process of $X$ at $0$ and $(\mu,\sigma) $ are a pair of real-valued functions over $\II $ that satisfy Condition \ref{cond_drift_volatility_functions}.
		If $(X,B)$ jointly solve \eqref{eq_sde_part}-\eqref{eq_loc_tim_part}, then $X$ is the $(\rho,\mu,\sigma)$-SID.
	\end{theorem}
	
	\begin{proof}
		Let $(X,B) $ be a solution of \eqref{eq_sde_part}-\eqref{eq_loc_tim_part} defined on
		the probability space $\bP_x = (\Omega, \process{\bF_t}, \Prob_x)$, where $Y_0 = x $, $\Prob_x$-almost surely.
		Also, let $ \gamma$ the time-transform defined for every $t\ge 0 $ by
		\begin{equation}\label{eq_gamma_wt_integral_definition}
			\gamma(t) = \int_{0}^{t} \indic{X_s \ne 0} \vd s	
		\end{equation}
		and $A  $ the right-inverse of $\gamma $.
		Also, let $Y $ be the process defined for every $t\ge 0 $ by
		\begin{equation}\label{eq_wtY_definition}
			Y_t = X_{A(t)}.
		\end{equation}
		From the definition of $Y $, it holds that $\Prob_x(Y_0=x)= 1 $.
		From \eqref{eq_loc_tim_part}, if $\loct{X}{0}{}$ and $\loct{Y}{0}{}$ are the local times at $0 $ of $X$ and $Y$ respectively,
		\begin{equation}\label{eq_gammat_right_inverse_in_terms_of_A}
			\begin{aligned}
			t = \gamma\bigbraces{A(t)} &= \int_{0}^{A(t)}\indic{X_s \ne 0} \vd s = A(t) - \int_{0}^{A(t)}  \indic{X_s = 0} \vd s = A(t) - \frac{\rho}{2} \loct{X}{0}{A(t)} \\
			&= A(t) - \frac{\rho}{2} \loct{Y}{0}{t}.
			\end{aligned}
		\end{equation}
		The time-transform $A$ is continuous and strictly increasing, thus $\gamma $ is its proper inverse.\\
		From \eqref{eq_sde_part},
		\begin{equation}
			\begin{aligned}
			Y_t 	= X_{A(t)} &=
			X_{0} + \int_{0}^{A(t)} \mu(X_s)  \indic{X_s \ne 0} \vd s 
			+ \int_{0}^{A(t)} \sigma(X_s) \indic{X_s \ne 0} \vd B_s 
			\\
			&= X_{0} + \int_{0}^{A(t)} \mu(X_s)   \vd \gamma(s ) + \int_{0}^{A(t)} \sigma(X_s) \indic{X_s \ne 0} \vd B_s 
			\\
			&= X_{0} + \int_{0}^{t} \mu(Y_s)   \vd s + \int_{0}^{A(t)} \sigma(X_s) \indic{X_s \ne 0} \vd B_s.
			\end{aligned}
		\end{equation}
		Let $B^{1} $ be the process defined for every $t\ge 0 $ by
		\begin{equation}\label{eq_underlying_brownian_motion_0}
			B^{1}_t = \int_{0}^{A(t)} \indic{X_s \ne 0} \vd B_s 
		\end{equation}
		where,
		\begin{equation}\label{eq_levy_char_sbm}
			\qvsingle{B^{1}}_{t} = \int_{0}^{A(t)} \indic{\wtX_s \ne 0} \d s = \gamma(A(t)) = t.
		\end{equation}
		From Levy's characterization $B^{1} $ is a standard Brownian motion and as $\vd B^{1}_{t} = \indic{Y_s \ne 0} \vd B_{A(t)} $, 
		\begin{equation}
			Y_t = X_{0} + \int_{0}^{t} \mu(Y_s)   \vd s + \int_{0}^{t} \sigma(Y_s) \indic{Y_s \ne 0} \vd B_{A(s)} \\
			= X_{0} + \int_{0}^{t} \mu(Y_s)   \vd s + \int_{0}^{t} \sigma(Y_s) \vd B^{1}_s.
		\end{equation}
		Thus, from \cite[Proposition~VII.2.6]{RevYor} and  \cite[p.17]{BorSal}, $Y$ is the diffusion process defined through $s$ and $m$ where
		\begin{equation}\label{eq_dm_standard_sde}
			\begin{aligned}
				s'(x)&=e^{-\int_{a}^{x}\frac{2\mu (u)}{\sigma^2 (u)}du},
				&
				m_{Y}(\rd x) &= \frac{1}{s'(x)}\frac{2}{\sigma^2(x)}\vd x.
			\end{aligned}
		\end{equation}
		From~\cite[Exercise~VII.3.8]{RevYor}, the process $Y' = \process{s(Y_t)}$ is a diffusion on natural scale with speed measure $m_{Y'}(\rd x) = m_{Y} (s(\rd x )) 
		= \frac{2}{\sigma^{2}(x)} \vd x $.
		From Theorem~\ref{thm_diffusion_as_time_changed_brownian_motion}, there exists a Brownian motion $W$ such that,
		\begin{equation}\label{eq_nonstickysde_on_natural_scale_as_time_changed_bm}
			\begin{aligned}
				s(Y_t)&=W_{\gamma_{Y'}(t)}, &\forall t &\ge 0,
			\end{aligned}
		\end{equation}
		where $\gamma_{Y'}(t)$ is the right-inverse of $A_{Y'}(t)= \int_{\IR} \loct{W}{y}{t}m_{Y'}(\rd y)$.
		From \eqref{eq_wtY_definition} and \eqref{eq_nonstickysde_on_natural_scale_as_time_changed_bm},
		\begin{equation}
			\begin{aligned}
				s(X_t) &= W_{\gamma_{Y'}(\gamma (t))}, & \forall t &\ge 0.
			\end{aligned}
		\end{equation}
		From \eqref{eq_gammat_right_inverse_in_terms_of_A} and Lemma~\ref{lem_space_changed_local_time}, the right-inverse of $\gamma_{Y'}(\gamma (t))$ is
		\begin{equation}
			A(A_{Y'}(t)) = A_{Y'}(t) + \frac{\rho}{2} \loct{Y}{0}{A_{Y'}(t)}
			= A_{Y'}(t) + \frac{\rho}{2 s'(0)} \loct{W}{0}{t} = \frac{1}{2} \int_{\IR} \loct{W}{y}{t} \nu(\rd y),
		\end{equation}
		where 
		\begin{equation}
			\nu(\rd x) 
			= \frac{2}{\sigma^{2}(x)} \vd x + \frac{\rho}{s'(0)} \delta_0(\rd x).
		\end{equation}
		Thus, from Corollary \ref{cor_time_changed_brownian_as_diffusion_process}, $ s(X) = \process{s(X_t)} $ is a diffusion process on natural scale of speed measure
		$\nu $.
		As $s$ is continuous and invertible, $X $ is the diffusion process of scale
		and speed $(s,m) $, where
		\begin{equation}\label{eq_Y_sf_sm_wrt_X}
			\begin{aligned}
				m(\rd x) &:= \frac{1}{s'(x)}\frac{2}{\sigma^2(x)}\vd x + \frac{\rho}{s'(0)} \delta_0 (\rd x),
				& x&\in \IR.  
			\end{aligned}
		\end{equation}
		Hence, $X$ is a $(\rho,\mu,\sigma)$-SID.
		This completes the proof. 
	\end{proof}
	
	\begin{proposition}\label{prop_ssde_pathwise_characterization}
		Let $X$ be a $(\rho,\mu,\sigma)$-SID defined on a probability space $\bP_x = (\Omega, \process{\bF_t}, \Prob_x) $.
		Then, there exists a Brownian motion $W$ defined on an extension of $\bP_x $ such that, under $\Prob_x $,
		\begin{align}
			X_t	&= x + \int_{0}^{t} \mu(X_s) \indic{X_s \ne 0} \vd s +
			\int_{0}^{t} \sigma(X_s) \indic{X_s \ne 0} \vd W_s  , \label{eq_sde_part_integral}\\
			\int_{0}^{t} \indic{X_s = 0}	\vd s &= \frac{\rho}{2} \loct{X}{0}{t}. \label{eq_loc_tim_part_integral}
		\end{align}
	\end{proposition}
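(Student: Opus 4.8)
The plan is to reduce to the existence--uniqueness theory already developed for the system \eqref{eq_sde_part}--\eqref{eq_loc_tim_part} and then to transport the pathwise representation back onto the given process by a time change. By Theorem~\ref{thm_trajectorial_description} there is a probability space carrying a weak solution $(\widetilde X,\widetilde B)$ of \eqref{eq_sde_part}--\eqref{eq_loc_tim_part} with $\widetilde X_0=x$, and by Proposition~\ref{prop_stichy_sde_solution_diffusion} this $\widetilde X$ is a $(\mu,\sigma,\rho)$-sticky SDE solution, i.e.\ a diffusion with scale function $s$ and speed measure $m_X$ as in \eqref{eq_dm}. Since the law of a one-dimensional diffusion is uniquely determined by the pair (scale function, speed measure) --- equivalently by its semi-group, see Chapter 3.1 of \cite{RogWilV2}, or by the time-change representation of Theorem~\ref{thm_diffusion_as_time_changed_brownian_motion} --- the process $X$ has the same law as $\widetilde X$. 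In particular the identity \eqref{eq_loc_tim_part}, being a functional of the path law, holds for $X$ as well: $\int_0^t\indic{X_s=0}\,\vd s=\frac{\rho}{2}\loct{X}{0}{t}$, which is \eqref{eq_loc_tim_part_integral}. Along the way this also shows that $X$ is a semi-martingale, with $\qvsingle{X}_t=\int_0^t\sigma^2(X_s)\indic{X_s\ne 0}\,\vd s$.

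It then remains to construct the Brownian motion $W$ and to establish \eqref{eq_sde_part_integral}. For this I would run, directly on $X$, the time change used in the \emph{existence} part of Theorem~\ref{thm_trajectorial_description}, reversed. Put $\gamma(t)=\int_0^t\indic{X_s\ne 0}\,\vd s$, let $A$ be its right-continuous inverse, and set $Y_t=X_{A(t)}$. Exactly as in \eqref{eq_gammat_right_inverse_in_terms_of_A}, using \eqref{eq_loc_tim_part_integral} one gets $A(t)=t+\frac{\rho}{2}\loct{Y}{0}{t}$, so that $A$ is continuous and strictly increasing, $\gamma$ is its proper inverse, and $X_t=Y_{\gamma(t)}$ for every $t$. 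Passing to $s(X)$, which is on natural scale with speed measure $m_{s(X)}+\rho\delta_0$, and invoking Theorem~\ref{thm_diffusion_as_time_changed_brownian_motion} together with Corollary~\ref{cor_time_changed_brownian_as_diffusion_process} as in the proof of Proposition~\ref{prop_stichy_sde_solution_diffusion}, one identifies $Y$ as the diffusion with scale $s$ and the atomless speed measure of \eqref{eq_dm_standard_sde}; hence, by Proposition~2.6 of \cite{RevYor}-Chapter VII and page~17 of \cite{BorSal}, $Y$ solves \eqref{eq_non_sticky_sde_version} with respect to some Brownian motion $B^{1}$. Undoing the time change via Theorem~\ref{thm_change_of_variables_stoch_integral} and setting $W_t=B^{1}_{\gamma(t)}+\int_0^t\indic{X_s=0}\,\vd B^{0}_{s}$ with $B^{0}$ an independent Brownian motion, Lévy's characterization shows that $W$ is a standard Brownian motion and the chain of equalities displayed in \eqref{eq_sde_part_proof_1} yields \eqref{eq_sde_part_integral}.

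The hard part will be the identification of $Y$ as the \emph{atomless} $(\mu,\sigma)$-diffusion obtained after removing the holding time at $0$: this is precisely the computation of the existence part of Theorem~\ref{thm_trajectorial_description} and of Proposition~\ref{prop_stichy_sde_solution_diffusion} run in reverse, and the cleanest route is to work with $s(X)=\beta_{\tau(\cdot)}$ for a Brownian motion $\beta$ (Theorem~\ref{thm_diffusion_as_time_changed_brownian_motion}) and to split off the contribution of the atom $\rho\delta_0$ using the occupation-time formula for $\beta$ together with Lemmas~\ref{lem_space_changed_local_time} and \ref{lem_time_changed_local_time} (note that $s'(0)=1$ in \eqref{eq_dm} when the reference point is taken to be $0$). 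A minor technical point is that $B^{0}$, and hence $W$, may only be definable on an enlargement of $(\Omega,\family{\bF_t},\Prob_x)$; this is harmless, since the statement only asserts the existence of such a $W$.
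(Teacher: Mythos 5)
Your proposal is correct and follows essentially the same route as the paper: time-change away the time spent at $0$ via $\gamma(t)=\int_0^t\indic{X_s\ne 0}\,\vd s$ and its inverse $A$, identify $Y_t=X_{A(t)}$ as the classical diffusion \eqref{eq_non_sticky_sde_version} driven by some Brownian motion $B^{1}$, set $W_t=B^{1}_{\gamma(t)}+\int_0^t\indic{X_s=0}\,\vd B^{0}_s$, invoke Lévy's characterization, and undo the time change by the computations of Theorem~\ref{thm_trajectorial_description}. The one genuinely different ingredient is your first step: you obtain \eqref{eq_loc_tim_part_integral}, and the semimartingale property of $X$, by transferring them in law from the solution constructed in Theorem~\ref{thm_trajectorial_description} together with Proposition~\ref{prop_stichy_sde_solution_diffusion}, using that both sides are measurable path functionals; the paper instead leaves this implicit, folding it into the citation chain of the uniqueness part of Theorem~\ref{thm_trajectorial_description} (via \eqref{eq_gammat_right_inverse_in_terms_of_A}, \eqref{eq_gamma_equivalence}, \eqref{eq_Y_in_X_equality}). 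Your transfer-by-law argument is a clean way to make that step explicit and also resolves the chicken-and-egg issue of defining $\loct{X}{0}{}$ before knowing $X$ is a semimartingale. One citation-level caveat: to produce the driving Brownian motion $B^{1}$ on (an extension of) the given space you need the representation theorem for diffusions with generator $\mu f'+\tfrac12\sigma^2 f''$ (Theorem 2.7 of \cite{RevYor}-Chapter VII, which is what the paper uses after identifying the generator via Theorem 3.12), not Proposition 2.6, which goes in the converse direction (from SDE solution to scale/speed); your fallback sketch via Theorem~\ref{thm_diffusion_as_time_changed_brownian_motion} and Lemmas~\ref{lem_space_changed_local_time}--\ref{lem_time_changed_local_time} only identifies the law of $Y$, so this representation step is still needed, but it is standard and does not affect the validity of your argument.
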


	\begin{proof}
		Let $A $ be the right-inverse of $\gamma = [t \mapsto \int_{0}^{t} \indic{X_s \ne 0} \vd s] $
		and $Y $ the defined by
		\begin{equation}\label{eq_Y_expressed_as_X}
			\begin{aligned}
				Y_{t} &= X_{A(t)},
				& t&\ge 0.
			\end{aligned}
		\end{equation}
		Let $(s_X,m_X )$ and $(s_Y,m_Y )$ be respectively the scale and speed pairs of $X$ and $Y$.
		
		From \eqref{eq_dm_standard_sde} and \eqref{eq_Y_sf_sm_wrt_X}, we have 
		\begin{equation}\label{eq_speed_measure_X_Y_equality}
			\begin{aligned}
				s_Y &= s_X,& m_Y &= m_X - \frac{\rho}{s'(0)} \delta_0.
			\end{aligned}
		\end{equation}
		From \eqref{eq_dm} and \eqref{eq_speed_measure_X_Y_equality},
		\begin{equation}\label{eq_dm_sde}
			\begin{aligned}
			s_{Y}'(x)&=e^{-\int_{a}^{x}\frac{2\mu (u)}{\sigma^2 (u)}du},
			&
			m_Y(\rd x) &= \frac{1}{s'(x)}\frac{2}{\sigma^2(x)}\vd x.
			\end{aligned}
		\end{equation}
		Thus, from \cite[Theorem~VII.3.12]{RevYor}, the infinitesimal generator $\Lop_Y $ of $Y$ is
		\begin{equation}
			\Lop_Y f(x) = \mu(x) f'(x) +\frac{1}{2} \sigma^2(x) f''(x),
		\end{equation}
		for every $x\in \II $ and $f\in \dom(\Lop_Y) $ where $\dom(\Lop_Y) = C^2(\II)$.
		From \cite[Theorem~VII.2.7]{RevYor}, there exists a Brownian motion $W^{1}$ on an extension of $(\Omega, \process{\bF_t}, \Prob_x) $ such that $Y$ almost surely solves,
		\begin{equation}\label{eq_proof_SDE_nonsticky}
			\vd Y_t = \mu(Y_t) \vd t + \sigma (Y_t) \vd W^{1}_t,
		\end{equation}
		for every $t\ge 0 $.
		Let $W $ be the process such that
		\begin{equation}\label{eq_proof_link_WW0}
			W_t = W^{1}_{\gamma(t) } + \int_{0}^{t} \indic{X_s = 0} \vd W^{0}_{s},
		\end{equation}
		where $W^{0}$ is independent of $W^{1} $.
		We observe that 
		\begin{equation}
			\qvsingle{W}_t
			= \qvsingle{W^{1}_{\gamma(\cdot) }}_t
			+
			\Bigqvsingle{\int_{0}^{\cdot} \indic{X_s = 0} \vd W^{0}_{s}}_t
			= \gamma(t) + (t - \gamma(t))=t
		\end{equation}
		Thus, from Lévy's characterization, $W$ is a standard Brownian motion and
		\begin{equation}\label{eq_proof_link_WW1}
			W_t =  W^{1}_{\gamma(t) } + \int_{0}^{t}\indic{X_s = 0} \vd W_s.
		\end{equation}
		
		From \eqref{eq_Y_expressed_as_X},\eqref{eq_proof_SDE_nonsticky},\eqref{eq_proof_link_WW1},
		\begin{equation}\label{eq_proof_SDE2_SDE_part}
			\begin{aligned}
			X_t
			&= x + \int_{0}^{\gamma(t)} \mu(Y_s) \vd s
			+
			\int_{0}^{\gamma(t)} \sigma (Y_s) \vd W^{1}_s
			= x + \int_{0}^{t} \mu(X_s) \vd \gamma(s)
			+ \int_{0}^{t} \sigma (X_s) \vd W^{1}_{\gamma(s)}\\
			&=
			x + 
			\int_{0}^{t} \mu(X_s) \indic{X_s\not = 0} \vd s
			+ \int_{0}^{t} \sigma (X_s) 
			\indic{X_s \not = 0} \vd W_s.
			\end{aligned}
		\end{equation}
		This qualifies $X$ as a semimartingale.
		
		From \cite[Exercise~VII.3.8]{RevYor}, the process $s(X) $ is on natural scale, with speed measure
		\begin{equation}
			m_{s(X)}(\rd x)
			= \frac{2}{\sigma^{2}(x)} \vd x + \frac{\rho}{s'(0)} \delta_0(\rd x).
		\end{equation}
		From the occupation times formula for diffusions on natural scale \eqref{eq_intro_diffusion_occtimes_formula} and Lemma~\ref{lem_space_changed_local_time}, 
		\begin{equation}\label{eq_proof_SDE2_loct_part}
			\int_{0}^{t}\indic{X_s=0} \vd s = \int_{0}^{t}\indic{s(X_s)=0} \vd s = \frac{\rho}{2s'(0)} \loct{s(X)}{0}{t} = \frac{\rho}{2} \loct{X}{0}{t}.
		\end{equation}
		Thus, $(X,W)$ jointly solve \eqref{eq_sde_part_integral}-\eqref{eq_loc_tim_part_integral}.
	\end{proof}
	
	\begin{corollary}\label{cor_SID_semimartingale}
		A sticky Itô diffusion is a semimartingale.
	\end{corollary}
	
	\begin{proof}
		Let $X$ be the $(\rho,\mu,\sigma) $-SID.
		From Proposition \ref{prop_ssde_pathwise_characterization}, there exists a Brownian motion $W$ such that $X$ solves \eqref{eq_sde_part_integral}-\eqref{eq_loc_tim_part_integral}.
		We observe that \eqref{eq_sde_part_integral} is the Doob-Meyer decomposition of $X$.
	\end{proof}
	
	\subsection{Additional results }\label{ssec_proof_of_thm12a}
	
	\begin{lemma}[sticky Girsanov]\label{thm_sticky_girsanov}
		Let $(\Omega, \process{\bF}, \Prob)$ be a probability space and $X $ the process that solves
		\begin{align}
			\vd X_t	&= \mu(X_t) \indic{X_t \ne 0} \vd t + \sigma(X_t) \indic{X_t \ne 0} \vd B_t  , \label{eq_sde_P}\\
			\indic{X_t = 0}	\vd t &= \frac{\rho}{2}\vd \loct{X}{0}{t}, \label{eq_loct_P}
		\end{align}
		where $B$ is a $\Prob $-Brownian motion.
		Let $\theta $ be a processes such that $\Prob(\int_{0}^{T} \theta_s \d s < \infty)= 1 $,  
		$\mathcal{E}(\theta) $ the process such that
		\begin{equation}
			\mathcal{E}_t(\theta) = 	\exp \Bigbraces{\int_{0}^{t} \theta_s\d B_s - \frac{1}{2} \int_{0}^{t} \theta^2_s\d s},
		\end{equation}
		for every $t\ge 0 $
		and $\Qrob $  the probability measure such that $\d \Qrob = \mathcal{E}_t(\theta) \d \Prob $.
		Then, if $\Esp_{\Prob} $ is the expectancy under $\Prob $ and $\Esp_{\Prob}\bigbraces{\bE_t(\theta)} = 1 $, the process $X $ solves
		\begin{align}
			\vd \wtX_t	&= \bigbraces{\mu(\wtX_t) + \theta_t \sigma(\wtX_t)} \indic{\wtX_t \ne 0} \vd t + \sigma(\wtX_t) \indic{\wtX_t \ne 0} \vd \widetilde{B}_t  , \label{eq_sde_Q}\\
			\indic{\wtX_t = 0}	\vd t &= \frac{\rho}{2}\vd \loct{\wtX}{0}{t}, \label{eq_loct_Q}
		\end{align}	
		where $\widetilde{B}_t = B_t - \int_{0}^{t} \theta_s \d s  $ is a standard Brownian motion under $\Qrob $.
	\end{lemma}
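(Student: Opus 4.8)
The plan is to follow the classical Girsanov argument, but carry it through the time-change machinery that has already been developed, so that the drift is only modified off the sticky set and the local-time relation \eqref{eq_loct_P} is preserved verbatim. First I would observe that, since $\Esp_\Prob\bigbraces{\bE_t(\theta)}=1$, the measure $\Qrob$ with $\d\Qrob=\bE_t(\theta)\,\d\Prob$ is a genuine probability measure equivalent to $\Prob$ on $\bF_t$ for each $t$, and by the standard Girsanov theorem (e.g.\ Theorem 3.5.1 of \cite{KarShr}) the process $\widetilde B_t = B_t - \int_0^t \theta_s\,\d s$ is a standard Brownian motion under $\Qrob$. Since equivalence of measures does not change the null sets, the semimartingale $X$ is unchanged as a process, its quadratic variation $\qvsingle{X}$ is unchanged, and hence by the occupation-time characterization \eqref{eq_local_time_alternative_definition} its local time $\loct{X}{0}{}$ at $0$ is the same process under $\Qrob$ as under $\Prob$. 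Consequently the second equation \eqref{eq_loct_Q} holds under $\Qrob$ with $\wtX=X$ and the same $\loct{X}{0}{}$, with \emph{no} change whatsoever; this is the point where working with the $(s,m)$/time-changed description pays off, since the sticky relation is a pathwise identity insensitive to the change of measure.

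It then remains to rewrite the SDE part \eqref{eq_sde_P}. Substituting $\d B_t = \d\widetilde B_t + \theta_t\,\d t$ into \eqref{eq_sde_P} gives
\begin{equation}
	\d X_t = \mu(X_t)\indic{X_t\ne 0}\,\d t + \sigma(X_t)\indic{X_t\ne 0}\bigbraces{\d\widetilde B_t + \theta_t\,\d t}
	= \bigbraces{\mu(X_t)+\theta_t\sigma(X_t)}\indic{X_t\ne 0}\,\d t + \sigma(X_t)\indic{X_t\ne 0}\,\d\widetilde B_t,
\end{equation}
where one uses that the Lebesgue integral $\int_0^t \sigma(X_s)\indic{X_s\ne 0}\theta_s\,\d s$ is well defined $\Prob$-a.s.\ (hence $\Qrob$-a.s.) because $\Prob(\int_0^T\theta_s\,\d s<\infty)=1$ and $\sigma$ is bounded on the relevant range by Condition~\ref{cond_drift_volatility_functions} together with non-explosion. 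This is exactly \eqref{eq_sde_Q} with $\wtX=X$. Together with the unchanged local-time equation, $(X,\widetilde B)$ solves \eqref{eq_sde_Q}--\eqref{eq_loct_Q} under $\Qrob$, which is the assertion.

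The only genuinely delicate point is the interaction of the indicator $\indic{X_s\ne 0}$ with the drift transformation on the random, positive-measure set $\{s : X_s=0\}$: one must check that no contribution of $\theta$ leaks onto the sticky set and disturbs \eqref{eq_loct_Q}. This is handled by noting that on $\{X_s=0\}$ both the $\d t$-term and the $\d B_s$-term in \eqref{eq_sde_P} already vanish identically because of the $\indic{X_s\ne 0}$ factor, so adding $\theta_s\sigma(X_s)\indic{X_s\ne 0}\,\d s$ changes nothing there; and the local time $\loct{X}{0}{}$, being defined through $\qvsingle{X}$ via \eqref{eq_local_time_alternative_definition}, depends only on the $\Prob$- (equivalently $\Qrob$-) a.s.\ trajectory of $X$, which Girsanov leaves intact. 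A minor subtlety is that $\bE_\cdot(\theta)$ is a martingale only because $\Esp_\Prob\bigbraces{\bE_t(\theta)}=1$ is assumed (not merely the Novikov-type bound), which is precisely what the statement supplies, and this is also the hypothesis that makes $\widetilde B$ a $\Qrob$-Brownian motion on $[0,T]$; extending to all $t\ge 0$ is routine since $T$ is arbitrary.
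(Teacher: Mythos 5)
Your proposal is correct and follows essentially the same route as the paper's proof: invoke the standard Girsanov theorem to conclude that $\widetilde{B}_t = B_t - \int_0^t \theta_s \,\d s$ is a $\Qrob$-Brownian motion with $\Prob \sim \Qrob$, substitute $\d B_t = \d \widetilde{B}_t + \theta_t \,\d t$ into \eqref{eq_sde_P}, and note that the quadratic variation and the local time, being defined as limits in probability, are unchanged under the equivalent measure, so \eqref{eq_loct_P} carries over to $\Qrob$ verbatim. The only cosmetic difference is the reference used for Girsanov's theorem (the paper cites Liptser--Shiryaev rather than Karatzas--Shreve).
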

	
	\begin{proof}
		Let $X$ be the the solution of \eqref{eq_sde_P}-\eqref{eq_loct_P}, $\gamma $ the time-change  $\gamma(t) = \int_{0}^{t} \indic{X_s \ne 0} \vd s $ for every $t\ge 0 $, $A$ its right-inverse and $Y = \process{X_{A(t)}}$. 
		Let $\widetilde{B} $ be the process defined by $\widetilde{B}_t = B_t - \int_{0}^{t}  \theta_s \d s $ for every $t\ge 0 $.
		Then, from \cite[Theorem~6.3]{LipShi1}, $\widetilde{B} $ is a standard Brownian motion under
		$\Qrob  $ and the probability measures $\Prob $ and $\Qrob $ are equivalent.
		By substitution,
		\begin{equation}\label{eq_proof_girs_sde_part}
			\vd X_t	= \bigbraces{\mu(X_t) + \theta_t \sigma(X_t)} \indic{X_t \ne 0} \vd t + \sigma(X_t) \indic{X_t \ne 0} \vd \widetilde{B}_t.
		\end{equation}
		Moreover, since: the local time $\loct{X}{0}{t} $ and the quadratic variation $\qvsingle{X}_t $  are defined as limits in probability, $\Prob \sim \Qrob $ and
		\begin{equation}	
			\qvsingle{X}_t = \int_{0}^{t} \indic{X_t \ne 0}	\vd t = t - \int_{0}^{t} \indic{X_t = 0}	\vd t
		\end{equation}
		Thus, \eqref{eq_loct_P} holds also under $\Qrob $ and $X$ solves \eqref{eq_sde_Q}-\eqref{eq_loct_Q}.
	\end{proof}

	\begin{lemma}[sticky Itô formula]\label{lem_sticky_Ito}
		Let $X $ be a process defined on a probability space $(\Omega, \process{\bF_t},\Prob_x) $ such that $\Prob_x(X_0 = x)=1 $ and
		\begin{align}
			\vd X_t	&= \mu(X_t) \indic{X_t \ne 0} \vd t + \sigma(X_t) \indic{X_t \ne 0} \vd B_t  , \label{eq_sde_part_A}\\
			\indic{X_t = 0}	\vd t &= \frac{\rho}{2}\vd \loct{X}{0}{t}, \label{eq_loc_tim_part_A}
		\end{align}
		where $B$ is a $(\Omega, \process{\bF_t},\Prob_x) $-standard Brownian motion.
		Then, for every real valued $C^2 $ function $f $ such that  $f(0)=0 $ and $f'(0)\ne 0 $, the process $f(X) = \process{f(X_t)} $ is solution of
		\begin{align}
			\vd f(X_t)	&= \Bigbraces{f'(X_t) \mu(X_t) \frac{1}{2}f''(X_t) \sigma^2(X_t)}  \indic{X_t \ne 0} \vd t +  f'(X_t) \sigma(X_t)  \indic{X_t \ne 0} \vd B_t , \label{eq_sde_part_B}\\
			\indic{f(X_t) = 0}	\vd t &= f'(0) \frac{\rho}{2} \vd \loct{f(X)}{0}{t} \label{eq_loc_tim_part_B}
		\end{align}
		and $\Prob_x\bigbraces{f(X_0) = f(x)}=1 $.
	\end{lemma}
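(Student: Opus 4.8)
The plan is to apply the classical Itô formula to $f(X_t)$ away from the origin and then argue that the singular behaviour at $0$ is transferred from $X$ to $f(X)$ via the change-of-variables rule for local times, Lemma \ref{lem_space_changed_local_time}. First I would write the classical Itô–Tanaka decomposition: since $X$ is a continuous semi-martingale (by Proposition \ref{prop_ssde_pathwise_characterization}, or directly from \eqref{eq_sde_part_A}) and $f \in C^2$, we have
\begin{equation*}
	f(X_t) = f(x) + \int_0^t f'(X_s)\,\vd X_s + \frac{1}{2}\int_0^t f''(X_s)\,\vd\qvsingle{X}_s.
\end{equation*}
Now I would substitute the dynamics \eqref{eq_sde_part_A}: because $\vd X_s = \mu(X_s)\indic{X_s\ne 0}\vd s + \sigma(X_s)\indic{X_s\ne 0}\vd B_s$ and $\vd\qvsingle{X}_s = \sigma^2(X_s)\indic{X_s\ne 0}\vd s$ (the latter from \eqref{eq_sde_part_A} since the finite-variation part contributes nothing to the quadratic variation and the $\indic{X_s\ne0}$ factor squares to itself), the integrands in the $\vd s$ and $\vd B_s$ terms all acquire the factor $\indic{X_s \ne 0}$. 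This immediately yields \eqref{eq_sde_part_B}. One small point to check here is that the term $f'(X_s)\mu(X_s)\indic{X_s\ne 0}$ is well-defined and integrable; this follows from continuity of $X$, the local boundedness of $f',\mu$ on compacts, and non-explosiveness from Condition \ref{cond_drift_volatility_functions}.

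The second equation \eqref{eq_loc_tim_part_B} is where the real work is. The idea is that $f$ is strictly monotone near $0$ (since $f'(0)\ne 0$ and $f'$ is continuous) with $f(0)=0$, so on the relevant time set $\{X_t = 0\} = \{f(X_t) = 0\}$, and I want to relate $\loct{f(X)}{0}{}$ to $\loct{X}{0}{}$. If $f$ is globally strictly increasing and a difference of two convex functions (which holds locally; one can either assume $f'>0$ in a neighbourhood and localise, or replace $f$ by a globally strictly monotone $C^2$ modification agreeing with $f$ near $0$ — the local time at $0$ only depends on $f$ near $0$), then Lemma \ref{lem_space_changed_local_time} gives $\loct{f(X)}{f(0)}{t} = f'(0)\loct{X}{0}{t}$, i.e. $\loct{f(X)}{0}{t} = f'(0)\loct{X}{0}{t}$. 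Combining this with \eqref{eq_loc_tim_part_A}, namely $\indic{X_t=0}\vd t = \tfrac{\rho}{2}\vd\loct{X}{0}{t}$, and the set identity $\{f(X_t)=0\}=\{X_t=0\}$ (valid since $f$ is injective near $0$ with $f(0)=0$), we get
\begin{equation*}
	\indic{f(X_t)=0}\vd t = \indic{X_t=0}\vd t = \frac{\rho}{2}\,\vd\loct{X}{0}{t} = \frac{\rho}{2 f'(0)}\,\vd\loct{f(X)}{0}{t},
\end{equation*}
which is \eqref{eq_loc_tim_part_B} (note: if $f'(0)<0$ one uses $|f'(0)|$ and the symmetric local time, but as stated the lemma implicitly assumes the sign works out, or one applies the argument to $\sgn(f'(0))f$). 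Finally $\Prob_x(f(X_0)=f(x))=1$ is immediate from $\Prob_x(X_0=x)=1$.

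The main obstacle I anticipate is the handling of the local-time identity when $f$ is only assumed $C^2$ with $f'(0)\ne 0$ but not globally monotone: Lemma \ref{lem_space_changed_local_time} is stated for globally strictly increasing $f$. The clean fix is a localisation argument — pick $\delta>0$ with $f'\ne 0$ (say $>0$) on $(-\delta,\delta)$, extend $f|_{(-\delta,\delta)}$ to a globally strictly increasing $C^2$ (or difference-of-convex) function $\tilde f$, apply Lemma \ref{lem_space_changed_local_time} to $\tilde f$, and then note that local time at level $0$ and the occupation measure of $\{X=0\}$ only see the behaviour of the transformation in an arbitrarily small neighbourhood of $0$, so the identity for $\tilde f$ transfers to $f$. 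The rest of the argument — the Itô formula substitution and the bookkeeping of the $\indic{X_s\ne 0}$ factors — is routine.
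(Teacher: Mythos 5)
Your route is the same as the paper's: the paper's proof consists precisely of (i) noting $X$ is a continuous semi-martingale and applying the standard Itô formula to obtain \eqref{eq_sde_part_B}, and (ii) invoking Lemma \ref{lem_space_changed_local_time} together with \eqref{eq_loc_tim_part_A} to obtain \eqref{eq_loc_tim_part_B}. Your additional bookkeeping — $\vd\qvsingle{X}_s=\sigma^2(X_s)\indic{X_s\ne 0}\vd s$, the $\vd t$-a.e.\ identification of $\{f(X_t)=0\}$ with $\{X_t=0\}$, and the localisation needed because Lemma \ref{lem_space_changed_local_time} is stated for globally increasing $f$ — is exactly what the paper leaves implicit, and it is handled essentially correctly.

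Two points deserve attention. First, the constant: what you actually derive is $\indic{f(X_t)=0}\vd t=\tfrac{\rho}{2f'(0)}\vd\loct{f(X)}{0}{t}$, and you then assert ``which is \eqref{eq_loc_tim_part_B}''; but \eqref{eq_loc_tim_part_B} as printed carries the coefficient $f'(0)\tfrac{\rho}{2}$, the reciprocal of yours (they agree only if $f'(0)=\pm 1$). Your constant is the correct one — test with a sticky Brownian motion and $f(x)=cx$: then $\loct{cX}{0}{t}=c\,\loct{X}{0}{t}$, so $\indic{cX_t=0}\vd t=\tfrac{\rho}{2c}\vd\loct{cX}{0}{t}$ — so the mismatch is a slip in the displayed statement (which propagates to \eqref{eq_ito_applied_loctime}; it is harmless for Theorem \ref{thm_main_result_stqSDE_intro}, since Theorem \ref{thm_auxiliary_main_result_sbm} does not depend on the value of the stickiness parameter and the final rescaling of the local time is done directly via Lemma \ref{lem_space_changed_local_time}), but you should flag the discrepancy explicitly rather than silently identify the two expressions. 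Second, a caution on your localisation: the claim that $\loct{f(X)}{0}{}$ ``only sees $f$ near $0$'' is false in general, because if $f$ has another zero $z\ne 0$ that $X$ visits, $\loct{f(X)}{0}{}$ also collects the local time of $X$ at $z$; so the transfer from the monotone modification $\tilde f$ back to $f$ requires $f^{-1}(0)=\{0\}$ (or simply strict monotonicity of $f$), which is automatic in the only application, $f=S$ strictly increasing, but is not guaranteed by the stated hypotheses $f(0)=0$, $f'(0)\ne 0$.
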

	
	\begin{proof}
		The process $X $ is a semi-martingale as,
		\begin{equation}
			X_t = X_0 + \int_{0}^{t} \mu(X_s) \indic{X_s \ne 0} \d s + \int_{0}^{t} \sigma(X_s) \indic{X_s \ne 0} \d B_s,
		\end{equation}
		where $\int_{0}^{t} \mu(X_s) \indic{X_s \ne 0} \d s  $ is a process of bounded variation and  $\int_{0}^{t} \sigma(X_s) \indic{X_s \ne 0} \d B_s $ is a local martingale.
		Thus, we may apply the standard Itô formula for $f\in C^2(\IR) $,
		\begin{equation}\label{eq_proof_applied_standard_ito_formula}
			\begin{aligned}
			\vd f(X_t)	&= f'(X_t)  \vd X_t + \frac{1}{2}f''(X_t) \vd \qvsingle{X}_t =  \\
			&= \Bigbraces{ f'(X_t) \mu(X_t)  + \frac{1}{2}f''(X_t) \sigma^2(X_t)} \indic{X_t \ne 0}  \d t + f'(X_t)\sigma(X_t)  \indic{X_t \ne 0} \d B_t,
			\end{aligned}
		\end{equation}
		thus proving \eqref{eq_sde_part_B}. 
		Lemma~\ref{lem_space_changed_local_time} and
		\eqref{eq_loc_tim_part_A} yield \eqref{eq_loc_tim_part_B}, thus the result is proven.
	\end{proof}

	\section{Proofs of the main results}\label{sec_proofs}

	\subsection{Proof of Theorem~\ref{thm_main_result_stqSDE_intro}}\label{ssec_proof_mainresult}

	\begin{proof}[Proof (of Theorem~\ref{thm_main_result_stqSDE_intro})]
		We suppose there exists a $\delta >0 $ such that
		\begin{equation}\label{eq_temporary_condition_sigma}
			\delta \le \sigma(x)\le 1/\delta,\qquad \bigabsbraces{\sigma'(x)} \le 1/\delta ,
		\end{equation}
		for every $x\in \II $.
		From Proposition \ref{prop_ssde_pathwise_characterization}, there exists a Brownian motion $B$ such that $(X,B)$ jointly solves
		\begin{align}
			\vd X_t	&= \mu(X_t) \indic{X_t \ne 0} \vd t + \sigma(X_t) \indic{X_t \ne 0} \vd B_t  , \label{eq_sde_part_D}\\
			\indic{X_t = 0}	\vd t &= \frac{\rho}{2}\vd \loct{X}{0}{t} . \label{eq_loc_tim_part_D}
		\end{align}
		Let $\Qrob_x  $ be the probability measure such that $\d \Qrob_x = \mathcal{E}_t(\theta) \d \Prob_x $ where
		\begin{equation}
			\begin{aligned}
			\mathcal{E}_t(\theta)  &= 	\exp \Bigbraces{\int_{0}^{t} \theta_s \d B_s - \frac{1}{2} \int_{0}^{t} \theta^2_s \d s},&
			\theta_t &= \sigma'(X_t) - \frac{\mu(X_t)}{\sigma(X_t)},
			& t&\ge 0.
			\end{aligned}
		\end{equation}
		From Lemma~\ref{thm_sticky_girsanov}, $(X, \wtB) $ jointly solve 
		\begin{align}
			\vd X_t	&= \frac{1}{2}\sigma(X_t)\sigma'(X_t) \indic{X_t \ne 0} \vd t + \sigma(X_t) \indic{X_t \ne 0} \vd \wtB_t  , \\
			\indic{X_t = 0}	\vd t &= \frac{\rho}{2}\vd \loct{X}{0}{t},
		\end{align}		
		where $\wtB = B_t - \int_{0}^{t} \theta_s \d s $ is a standard Brownian motion under $\Qrob_x$.
		Let $S$ be the function defined for every $ x\in \IR$ by
		\begin{equation}\label{eq_S_sigma_def}
			S(x) =\int_{0}^{x} \frac{1}{\sigma(y)} \d y.
		\end{equation}
		We observe that $S $ is strictly increasing and $S(0)=0 $.
		Thus, from \eqref{eq_sde_part_B}-\eqref{eq_loc_tim_part_B}, the process $X'= \process{S(X_t)}$ solves
		\begin{align}
			\d X'_t &= \frac{1}{\sigma(X_t)} \d X_t - \frac{1}{2} \frac{\sigma'(X_t)}{\sigma^2(X_t)} \vd \qvsingle{X}_t
			=  \indic{X'_t \ne 0}  \d \wtB_t. \label{eq_ito_applied_sde} \\
			\indic{X'_t = 0}	\vd t &= 
			\frac{\rho}{2\sigma(0)} \vd \loct{X'}{0}{t}.\label{eq_ito_applied_loctime}
		\end{align}
		From Theorem~\ref{prop_stichy_sde_solution_diffusion}, $X' $ is a sticky Brownian motion of parameter $\rho>0 $.
		Let $T$ be the function defined for all $x\in \IR$ by
		\begin{equation}\label{eq_space_transforms}
			T(x) = S^{-1}(x)/\sigma(0).
		\end{equation}
		We observe that
		\begin{equation}
			T'(x) = \frac{\sigma(x)}{\sigma(0)}.
		\end{equation}
		From \eqref{eq_temporary_condition_sigma},
		for every $x\in \IR $,
		\begin{equation}\label{eq_condition_T0_proof_0}
			\begin{split}
				&T(0)=0,\\ 
				&T'(0)=1,\\ 
				&\delta  /\sigma(0) \le T'(x)  \le 1/ \delta  \sigma(0), \\
				&\bigabsbraces{T''_0(x)} \le \norm{T''}_{\infty} \norm{\sigma}^{2}_{\infty} / \sigma^2(0) + \norm{T'}_{\infty} \norm{\sigma'}_{\infty}/ \sigma(0) \le \frac{1} { \delta \sigma(0)} \Bigbraces{\frac{1}{\delta \sigma(0)}+1 }.
			\end{split} 
		\end{equation}
		We observe that $T $
		satisfies \eqref{eq_condition_T} for $\epsilon =   \delta \bigbraces{\sigma(0)\wedge 1/\sigma(0)}$.
		Thus, from Theorem~\ref{thm_auxiliary_main_result_sbm} and \eqref{eq_ito_applied_sde}-\eqref{eq_ito_applied_loctime},
		\begin{equation}
			\frac{\un}{n} \sum_{i=1}^{[nt]} g_n[T]\bigbraces{\un \hfprocesstrue{X'}{i-1}} \convergence{} \lambda(g) \loct{X'}{0}{t},
		\end{equation}
		locally, uniformly in time, in $\Qrob_x $-probability, where $g_n[T](x) = g(\na T(x/\na)) $.
		From \eqref{eq_space_transforms} and the definition of $X' $,
		\begin{equation}\label{eq_functional_convergence_proof_v09}
			\frac{\na}{n} \sum_{i=1}^{[nt]} g(\na \hfprocesstrue{X}{i-1}) \convergence{} \lambda(g) \loct{X'}{0}{t},
		\end{equation}
		locally, uniformly in time, in $\Qrob_x $-probability.
		From \eqref{eq_local_time_alternative_definition}, the local time is defined as a limit in probability.
		Thus, from \eqref{eq_functional_convergence_proof_v09} and as $\Prob_x \sim \Qrob_x$,
		\begin{equation}\label{eq_result_for_bounded_sigma}
			\frac{\un}{n} \sum_{i=1}^{[nt]} g(\un \hfprocesstrue{X}{i-1}) \convergence{} \lambda(g) \loct{X'}{0}{t},
		\end{equation}
		locally, uniformly in time, in $\Prob_x $-probability.
		From Lemma~\ref{lem_space_changed_local_time}, $\loct{X}{0}{t} /  \sigma(0)  $ is a version of $\loct{X'}{0}{t}$.
		Thus, since we supposed \eqref{eq_temporary_condition_sigma}, the convergence  \eqref{eq_auxiliary_main_result_sde} is proven in the case of bounded $\sigma $, $1/ \sigma $, $\sigma' $. From \cite[Section~2.5]{Jac98}, the proof is extended to any $\sigma \in C^1 $.
	\end{proof}

	\subsection{Proof of Corollary \ref{cor_stickiness_estimator_consistency}}\label{ssec_proof_of_cor13}
	
	From \eqref{eq_loc_tim_part}, the occupation/local time ratio is the stickiness parameter.
	Thus having estimations of the two aforementioned quantities
	results in a consistent estimator of the stickiness parameter.
	We first show that the occupation time can be consistently approximated by a Riemann sum.
	Then, we use it along with Theorem~\ref{thm_main_result_stqSDE_intro} to prove Corollary~\ref{cor_stickiness_estimator_consistency}.
	
	\begin{lemma}\label{prop_occupation_time_conv}
		Let $X $ be a semi-martingale and $\occt{X}{0}{} $ be its occupation time of $0$ defined for every $t\ge 0 $ by
		\begin{equation}
			\occt{X}{0}{t} = \int_{0}^{t} \indic{X_s = 0} \vd s.
		\end{equation}
		Then,
		\begin{equation}\label{eq_prop_occupation_time_conv_result}
			\frac{1}{n}\sum_{i=1}^{[nt]}\indic{\hfprocess{X}{}{i-1}=0} \convergence{}  	\occt{X}{0}{t},
		\end{equation}
		locally uniformly in time, in probability.
	\end{lemma}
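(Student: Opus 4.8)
The plan is to prove pointwise convergence in probability for each fixed $t\ge 0$, then upgrade to local uniform convergence in time using the monotonicity of both sides and Lemma~\ref{lem_uniform_in_time_in_probability_conv}. For the pointwise statement, the key observation is that the set $\{s : X_s = 0\}$ has positive Lebesgue measure (this is exactly what \eqref{eq_loc_tim_part} or \eqref{eq_loc_tim_part_integral} encodes for a sticky SDE), so the Riemann sum $\frac{1}{n}\sum_{i=1}^{[nt]}\indic{\hfprocess{X}{}{i-1}=0}$ is genuinely approximating $\int_0^t \indic{X_s=0}\,\vd s$ and is not identically $0$ as it would be for a non-sticky diffusion. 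I would write
\begin{equation}
	\frac{1}{n}\sum_{i=1}^{[nt]}\indic{\hfprocess{X}{}{i-1}=0} - \occt{X}{0}{t}
	= \sum_{i=1}^{[nt]} \int_{\frac{i-1}{n}}^{\frac{i}{n}} \bigbraces{\indic{X_{(i-1)/n}=0} - \indic{X_s = 0}} \vd s
	+ \int_{[nt]/n}^{t}\indic{X_s=0}\,\vd s,
\end{equation}
where the last term is bounded by $1/n$ and thus negligible. The remaining sum decomposes into the contribution of intervals where $X_{(i-1)/n}=0$ but $X$ leaves $0$ before time $i/n$, and the contribution of intervals where $X_{(i-1)/n}\ne 0$ but $X$ hits $0$ within the interval.

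First I would handle the ``easy'' direction: on each interval $[\frac{i-1}{n},\frac{i}{n}]$ we have $\int \indic{X_s=0}\vd s \le \frac1n\indic{X_{(i-1)/n}=0} + \indic{X_{(i-1)/n}=0,\ \exists s\le i/n: X_s \ne 0}\cdot\frac1n + \dots$; more cleanly, one controls the discrepancy by the local oscillation of $X$. The natural tool is the occupation times formula together with the fact that $\occt{X}{0}{\cdot}$ is continuous and, via \eqref{eq_loc_tim_part_integral}, equals $\frac{\rho}{2}\loct{X}{0}{\cdot}$, which has no atoms. I would bound the error by noting that the symmetric difference of $\{s\le t : X_s=0\}$ and $\bigcup_{i:\,X_{(i-1)/n}=0}[\frac{i-1}{n},\frac in)$ has Lebesgue measure controlled by the number of $1/n$-intervals on which $X$ both hits and leaves $0$, multiplied by $1/n$; the number of such intervals is of order the number of ``excursions away from $0$ of length $\ge 1/n$'' plus boundary crossings, and a.s. finite on $[0,t]$, so its contribution vanishes like $1/n$. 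More robustly, one can estimate directly: the error is at most $\int_0^t \indic{\,\exists s'\in[\lfloor ns\rfloor/n,\,(\lfloor ns\rfloor+1)/n]:\ X_{s'}\ne 0,\ X_s = 0 \text{ or vice versa}}\,\vd s$, which tends to $0$ pointwise by right-continuity of paths and the fact that the zero set is closed, then apply dominated convergence (dominating constant is $t$).

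I expect the main obstacle to be making the ``mismatch on transition intervals'' argument rigorous without circularity, since one cannot simply assert a.s.\ pointwise convergence of the integrand without invoking some path regularity of the sticky process. The cleanest route is: let $Z_n(s) = \indic{X_{\lfloor ns\rfloor/n}=0}$; then $Z_n(s)\to\indic{X_s=0}$ for a.e.\ $s$ on the event that $s$ is not a boundary point of the zero set $\mathcal Z = \{s:X_s=0\}$, and $\mathcal Z$ being closed (by path continuity) has boundary of Lebesgue measure zero; hence $Z_n(s)\to\indic{X_s=0}$ for a.e.\ $(s,\omega)$ with respect to $\vd s\otimes\d\Prob$. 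Since $|Z_n|\le 1$, the bounded convergence theorem gives $\int_0^t Z_n(s)\vd s \to \occt{X}{0}{t}$ in $L^1(\Prob)$, hence in probability, for each fixed $t$. Finally, since $s\mapsto\frac1n\sum_{i=1}^{[ns]}\indic{\hfprocess{X}{}{i-1}=0}$ is nondecreasing and the limit $\occt{X}{0}{\cdot}=\frac{\rho}{2}\loct{X}{0}{\cdot}$ is a.s.\ continuous and bounded on compacts, Lemma~\ref{lem_uniform_in_time_in_probability_conv} upgrades the pointwise-in-$t$ convergence in probability to local uniform convergence in time, in probability, completing the proof.
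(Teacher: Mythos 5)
Your overall skeleton (fix $t$, prove convergence in probability, then upgrade by monotonicity and Lemma~\ref{lem_uniform_in_time_in_probability_conv}) is the same as the paper's, but the pointwise step has a genuine gap. The claim you lean on --- that the zero set $\mathcal Z=\{s: X_s=0\}$, being closed, has boundary of Lebesgue measure zero, so that $\indic{X_{\lfloor ns\rfloor/n}=0}\to\indic{X_s=0}$ for a.e.\ $s$ --- is false precisely for the processes this lemma is meant for. A sticky diffusion with $0<\rho<\infty$ leaves $0$ instantly: writing the sticky Brownian motion as $X_t=B_{\gamma(t)}$ with $\gamma=A^{-1}$ and $A(t)=t+\frac{\rho}{2}\loct{B}{0}{t}$, the zero set of $X$ is the image under the homeomorphism $A$ of the zero set of $B$, hence almost surely a closed set with \emph{empty interior} but \emph{positive Lebesgue measure} (a fat-Cantor-type set). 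Its boundary is therefore the whole zero set and has positive measure, and these boundary times are exactly the times the Riemann sum must capture. Concretely, your ``more robust'' error bound $\int_0^t\indic{\exists s' \text{ in the grid interval of } s:\ X_{s'}\ne0,\ X_s=0\ \text{or vice versa}}\,\vd s$ does not tend to $0$: for a.e.\ $s$ with $X_s=0$ every neighbourhood contains times where $X\ne0$, so the integrand equals $1$ for all $n$ on a set of measure $\occt{X}{0}{t}$. Similarly, ``the number of excursions of length $\ge 1/n$ is a.s.\ finite, so the contribution is $O(1/n)$'' does not follow, since that number grows with $n$.

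The paper's proof supplies exactly the ingredient your argument is missing: a quantitative control of observations that are \emph{near} $0$ but not \emph{at} $0$. It writes $\indic{x=0}=\indic{|x|<\epsilon}-\indic{0<|x|<\epsilon}$, bounds $\Esp_x\bigl[\frac1n\sum_{i\le nt}\indic{0<|\hfprocess{X}{}{i-1}|<\epsilon}\bigr]$ by $4\epsilon K\sqrt t$ uniformly in $n$ using the transition-kernel estimate of Lemma~\ref{prop_semi_group_bounds} (plus Markov's inequality), and treats $\frac1n\sum_{i\le nt}\indic{|\hfprocess{X}{}{i-1}|<\epsilon}$ by sandwiching the indicator between continuous compactly supported functions $\psi\le\indic{|x|<1}\le\phi$, whose Riemann sums converge a.s.\ to the corresponding time integrals, and finally lets $\epsilon\to0$. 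Note that this input is probabilistic (it uses the law of the process through the kernel bound), not pathwise/topological; some such ingredient is unavoidable, because a purely deterministic Riemann-sum argument cannot distinguish the discrete observations hitting $0$ exactly from merely being close to $0$, which is the heart of this lemma.
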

	
	\begin{proof}
		As both $t \mapsto \frac{1}{n}\sum_{i=1}^{[nt]}\indic{\hfprocess{X}{}{i-1}=0} $ and $	\occt{X}{0}{} $ are increasing processes, with the same argument as in the proof of Theorem~\ref{thm_auxiliary_main_result_sbm}, it suffices to prove the convergence in probability for each $t>0 $.
		If $\delta >0 $ and $\varepsilon >0 $ are two positive numbers,
		\begin{equation}\label{eq_excess_probability_decomposition}
			\begin{aligned}
			&\Prob_x \Bigbraces{ \bigabsbraces{\frac{1}{n} \sum_{i=1}^{[nt]}  \indic{\hfprocess{X}{}{i-1}=0} - 	\occt{X}{0}{t} } > \delta} \\
			&\qquad= \Prob_x \Bigbraces{ \bigabsbraces{\frac{1}{n} \sum_{i=1}^{[nt]}  \indic{|\hfprocess{X}{}{i-1}|<\varepsilon} -  \frac{1}{n} \sum_{i=1}^{[nt]}  \indic{0<|\hfprocess{X}{}{i-1}|<\varepsilon}  - 	\occt{X}{0}{t} } > \delta} \\
			&\qquad\le \Prob_x \Bigbraces{ \bigabsbraces{\frac{1}{n} \sum_{i=1}^{[nt]}  \indic{|\hfprocess{X}{}{i-1}|<\varepsilon}   - 	\occt{X}{0}{t} } + \bigabsbraces{\frac{1}{n} \sum_{i=1}^{[nt]}  \indic{0<|\hfprocess{X}{}{i-1}|<\varepsilon}} > \delta} \\
			&\qquad\le \Prob_x \Bigbraces{ \bigabsbraces{\frac{1}{n} \sum_{i=1}^{[nt]}  \indic{0<|\hfprocess{X}{}{i-1}|<\varepsilon}} > \frac{\delta}{2}} 
			+ \Prob_x \Bigbraces{ \bigabsbraces{\frac{1}{n} \sum_{i=1}^{[nt]}  \indic{|\hfprocess{X}{}{i-1}|<\varepsilon}   - 	\occt{X}{0}{t} }  > \frac{\delta}{2}}.
		\end{aligned}
		\end{equation}
		\noindent
		From \eqref{eq_lem_bound_semigroup_A} for $h(x)= \indic{0<|x|<\varepsilon} $ and as
		$\sum_{i=1}^{[nt]} \frac{1}{\sqrt{i}} \le 2 \sqrt{nt} $,
		\begin{equation}
			\Esp_x \Bigbraces{ \frac{1}{n}  \sum_{i=1}^{[nt]}  \indic{0<|\hfprocess{X}{}{i-1}|<\varepsilon} } 
			= \frac{1}{n} \sum_{i=1}^{[nt]} \Esp_x \bigbraces{ h(\hfprocess{X}{}{i-1})} \le \frac{1}{n} 2 \varepsilon K \sqn \sum_{i=1}^{[nt]} \frac{1}{\sqrt{i}} \le  4 \varepsilon K \sqt.  
		\end{equation}
		Thus, from Markov's inequality,
		\begin{equation}\label{eq_residual_term_convergence_occt}
			\Prob_x \Bigbraces{ \bigabsbraces{\frac{1}{n} \sum_{i=1}^{[nt]}  \indic{0<|\hfprocess{X}{}{i-1}|<\varepsilon}} > \frac{\delta}{2}} 
			\le \frac{8 \varepsilon K \sqt  }{\delta}.
		\end{equation}

		We consider the following functions:
		\begin{align}
			\phi(x)&= (2 - |x|) \indic{1\le |x|< 2} + \indic{|x|< 1}, \\
			\psi(x)&= 2(1- |x|) \indic{0.5\le |x|< 1} + \indic{|x|< 0.5}.
		\end{align}
		The functions $\phi $ and $\psi $ are both continuous and bounded with compact support and
		\begin{equation}
			\psi(x) \le \indic{|x|< 1} \le \phi(x).
		\end{equation}
		The composed function $\phi(X_t) $ and $\psi(X_t) $ are both a.s. continuous functions of $t$, hence a.s. Riemann integrable. 
		Thus,
		\begin{equation}
			\frac{1}{n} \sum_{i=1}^{[nt]} \phi(\frac{1}{\varepsilon}\hfprocess{X}{}{i-1}) \convergence{}
			\int_{0}^{t}\phi(\frac{1}{\varepsilon}X_s) \vd s,
		\end{equation}
		\begin{equation}
			\frac{1}{n} \sum_{i=1}^{[nt]} \psi(\frac{1}{\varepsilon}\hfprocess{X}{}{i-1}) \convergence{}
			\int_{0}^{t}\psi(\frac{1}{\varepsilon}X_s) \vd s .
		\end{equation}
		From Lebesgue convergence theorem both $\int_{0}^{t}\phi(\frac{1}{\varepsilon}X_s) \vd s $ and $ \int_{0}^{t}\psi(\frac{1}{\varepsilon}X_s) \vd s $ converge to $\occt{X}{0}{t} $ as $\epsilon \rightarrow 0 $.
		Thus for each $\delta >0 $ there exists an $\varepsilon_0 >0 $ such that
		for all $\varepsilon \in (0,\varepsilon_0) $,
		\begin{align}
			\Plimsup{\Prob_x}{n}	\frac{1}{n} \sum_{i=1}^{[nt]} \indic{|\hfprocess{X}{}{i-1}|< \varepsilon} &\le \occt{X}{0}{t} + \frac{\delta}{2}, \label{eq_limsup_loctim_approx} \\
			\Pliminf{\Prob_x}{n}	\frac{1}{n} \sum_{i=1}^{[nt]} \indic{|\hfprocess{X}{}{i-1}|< \varepsilon} &\ge \occt{X}{0}{t} - \frac{\delta}{2}. \label{eq_liminf_loctim_approx}
		\end{align}
		Thus, for each $\delta>0 $ there exists an $\varepsilon_0 >0 $ 
		such that
		for all $\varepsilon \in (0,\varepsilon_0) $,
		\begin{equation}\label{eq_occupation_time_convergence_band}
			\Prob_x \Bigbraces{ \bigabsbraces{\frac{1}{n} \sum_{i=1}^{[nt]}  \indic{|\hfprocess{X}{}{i-1}|<\epsilon}   - 	\occt{X}{0}{t} }  > \frac{\delta}{2}} \convergence{} 0.
		\end{equation}
		From \eqref{eq_excess_probability_decomposition}, \eqref{eq_residual_term_convergence_occt} and \eqref{eq_occupation_time_convergence_band},
		for each $\varepsilon'>0 $ by choosing $\varepsilon' = \varepsilon \delta $ in \eqref{eq_residual_term_convergence_occt} , there exists a $ \delta >0$ such that
		\begin{equation}
			0 \le	\limsup_n \Prob_x \Bigbraces{ \bigabsbraces{\frac{1}{n} \sum_{i=1}^{[nt]}  \indic{\hfprocess{X}{}{i-1}=0} - 	\occt{X}{0}{t} } > \delta} \le 4 \varepsilon K \sqt.
		\end{equation}
		Thus, $\frac{1}{n}\sum_{i=1}^{[nt]}\indic{\hfprocess{X}{}{i-1}=0} $ converges in probability to 
		$\occt{X}{0}{t}$.
		From \eqref{eq_loc_tim_part}, $\occt{X}{0}{t} $ admits almost surely a continuous version.
		Thus, from Lemma~\ref{lem_uniform_in_time_in_probability_conv}, \eqref{eq_prop_occupation_time_conv_result} is proven.
	\end{proof}

	\begin{proof}[Proof (of Corollary 		\ref{cor_stickiness_estimator_consistency})]
		From \cite[Exercise~VI.1.14]{RevYor} and the Markov property,
		$\mathcal L = \{\loct{X}{0}{t}>0\} $ almost surely.
		Thus, from \eqref{eq_auxiliary_main_result_sde} and \eqref{eq_prop_occupation_time_conv_result}, 
		on $\mathcal{L} $,
		\begin{equation}\label{eq_statistic_stickiness_pre}
			\frac{(1/n)\sum_{i=1}^{[nt]} \indic{\hfprocess{X}{}{i-1}=0}}{
				(\na/n) \sum_{i=1}^{[nt]} g_n[T](\un \hfprocesstrue{X}{i-1}) } \convergence{\Prob_x}
			\frac{\occt{X}{0}{t}}{(\lambda(g)/ \sigma(0))\loct{X}{0}{t}}.
		\end{equation}
		Since $\occt{A}{0}{t}=(\rho/2)\loct{X}{0}{t} $, from \eqref{eq_statistic_stickiness_pre}, for all $\epsilon>0 $
		\begin{align}
			\Esp_x \bigbraces{ \indic{\mathcal{L}} \indic{\absbraces{ \widehat{\rho}_{n}(X) -  \rho}> \epsilon} }
			&= 
			\Esp_x \bigbraces{ \indic{\tau_0<t} \indic{\absbraces{ \widehat{\rho}_{n}(X) -  \rho}> \epsilon} }
			\lra 0, & \text{as }\; n&\lra\infty.
		\end{align}
		From Bayes,
		\begin{align}
			\Esp_x \bigbraces{ \indic{\tau_0<t} \indic{\absbraces{ \widehat{\rho}_{n}(X) -  \rho}> \epsilon} }
			= \frac{\Prob_x \bigbraces{  \indic{\absbraces{ \widehat{\rho}_{n}(X) -  \rho}> \epsilon} \big| \tau_0<t }}{\Prob_x(\tau_0<t)} &\lra 0, & \text{as }\; n&\lra\infty.
		\end{align}
		This finishes the proof.
	\end{proof}

	\section{Numerical experiments}\label{sec_num_exp}
	In this section, we present numerical simulations to assess the asymptotic behavior of the local time approximation~\eqref{eq_auxiliary_main_result_sde} and the stickiness parameter estimator~\eqref{eq_statistic_stickiness}.
	We simulate trajectories of an approximation process of the sticky Brownian motion of parameter $\rho  $. 
	Then, we compare the stickiness parameter estimations~\eqref{eq_statistic_stickiness} with the true value of $\rho $.
	For the numerical simulations we use the Space-time Markov chain approximation or STMCA Algorithm~\cite{anagnostakis2023general}.
	This algorithm uses grid-valued random walks to approximate the law of any one-dimensional generalized diffusion process. 
	The STMCA Algorithm is particularly adapted to the problem for the following reasons:
	\begin{enumerate}
		\item It is well suited for the simulation of sticky singular one-dimensional diffusions.
		\item By suitable choice of the grid, we can control the amount of path-observations of $X$ observed through the test function $g$. 
		In particular, for grids that satisfy the condition in Corollary 2.5 of  \cite{anagnostakis2023general},
		the convergence speed of the STMCA algorithm is optimal. 
		Thus, if one increases the precision of such a grid $\bg$ around $0$, we approximate accurately $X$ with  $\bg$-valued STMCA random walks and feed more path-observations to the statistic  \eqref{eq_loct_statistic} without paying a too heavy computational cost.
	\end{enumerate}
	
	\paragraph{The statistic:} Let $X$ be a sticky Brownian motion of parameter $\rho $ and $g$ the function defined for every $x\in \IR $ by $g(x) = \indic{1<|x|<5}/8 $.
	For every $\alpha \in (0,1) $ and $n\in \IN $, we define the following path-wise statistics of $X$:
	\begin{equation}\label{eq_loct_statistic}
		\begin{aligned}
		T^{(1)}_{n,\alpha}(X) &:= \frac{n^{\alpha}}{n} \sum_{i=1}^{[nt]} g(n^{\alpha}\hfprocess{X}{\rho}{i-1} ),
		& T^{(2)}_{n,\alpha}(X) &:= \frac{2}{n}  \frac{\sum_{i=1}^{[nt]} \indic{\hfprocess{X}{}{i-1}=0}}{
			T^{(1)}_{n,\alpha}(X)}.
		\end{aligned}
	\end{equation} 
	From \eqref{eq_auxiliary_main_result_sde}, for any $\alpha \in (0,1/2)  $, the statistics $ T^{(1)}_{n,\alpha}(X) $ and $T^{(2)}_{n,\alpha}(X) $
	converge  to $\loct{X}{0}{t} $ and $\rho $ respectively in probability.
	We use  $T^{(2)}_{n,\alpha}(X) $ as proxy to assess the properties of the local time approximation.

	\paragraph{The grid:} For every $h>0 $ we define the grid:
	\begin{equation}
		\bg_1(h) = \{0\} \cup \bigcubraces{\pm x_j(h);j \in \IN},
	\end{equation}
	where  $\{x_j(h)\}_{j\in \IN} $ is defined recursively  by:
	\begin{align}
		x_0(h) &= 0, \\
		x_j(h) &= x_{j-1}(h) + \biggbraces{ \frac{h^2}{\rho} \frac{1}{x_{j-1}(h)+1} + h \Bigbraces{1 - \frac{1}{x_{j-1}(h)+1}}} \indic{x_{j-1}(h) < 1} + h \indic{x_{j-1}(h) \ge 1}.
	\end{align}
	
	\begin{figure}[h!]
		\centering
		\resizebox{0.5\textwidth}{!}{\includegraphics{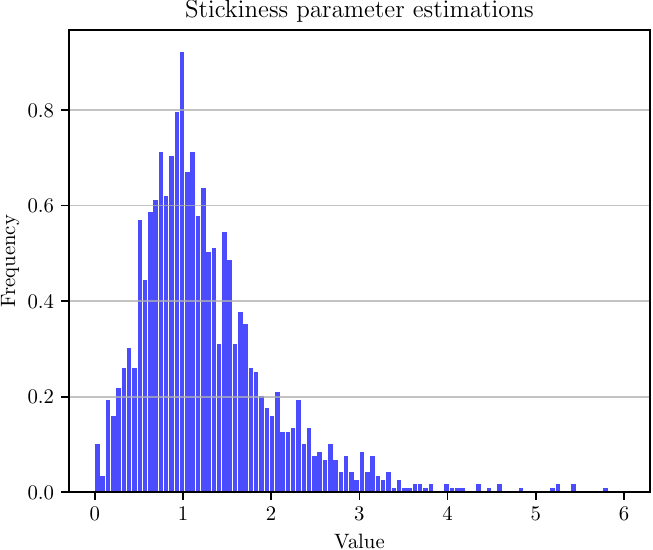}}~
		\resizebox{0.5\textwidth}{!}{\includegraphics{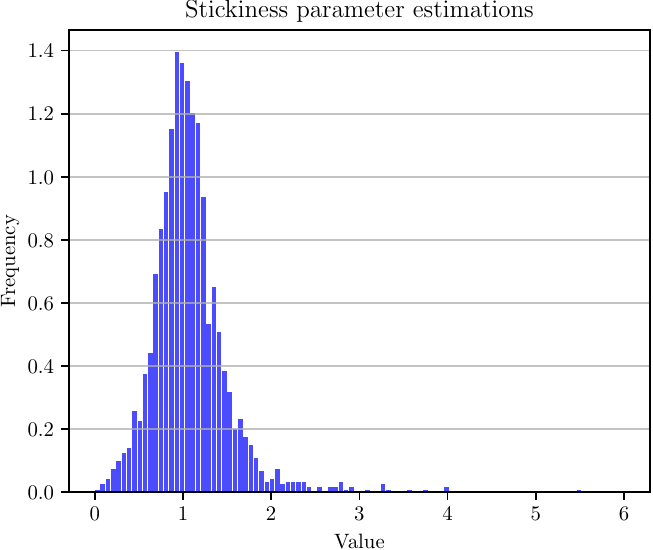}}\\
		\resizebox{0.5\textwidth}{!}{\includegraphics{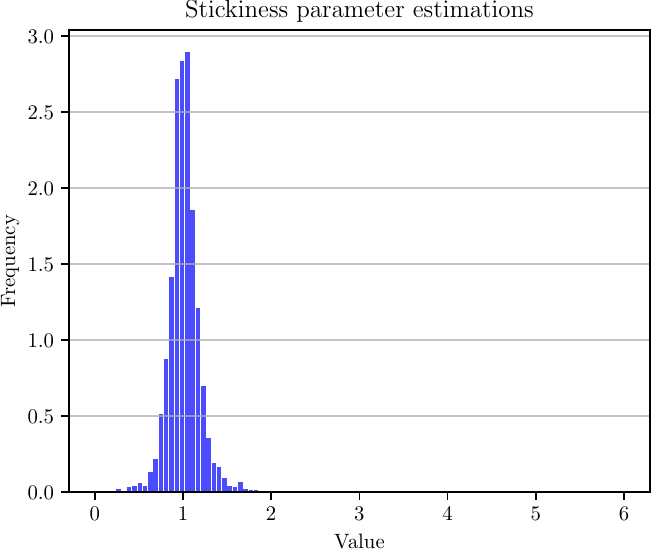}}
		\caption{ \centering
			Values of $T^{(2)}_{n,\alpha}(X) $ with $n = 100000 $ and $\alpha = 0.3$ (top-left),  $\alpha=0.45 $ (top-right),  $\alpha=0.55 $ (bottom).
			True value $\rho = 1 $.
		}
		\label{fig:estimations_histograms}
	\end{figure}

	\begin{table}[h]
		\begin{center}
			\begin{tabular}{@{}llrrrrc@{}}
				\multicolumn{1}{c}{$\alpha$} & \multicolumn{1}{c}{$n$} & \multicolumn{1}{c}{$\widehat{\rho}_{\MC}$}
				& \multicolumn{1}{c}{$\widehat{S}_{\MC}^2$} & \multicolumn{1}{c}{$\widehat{\sigma}_{\MC}$}
				& \multicolumn{1}{c}{$\widehat{\acc}$} & \multicolumn{1}{c}{$\mbox{rej}/N_{\MC}$} \\
				\hline
				0.6 & 20000 & 1.043 & 0.1451  & 0.3809 & 0.0121 & 0/1000 \\
				0.6 & 40000 & 1.029 & 0.0317 & 0.1782 & 0.0079 & 0/1000 \\
				0.6 & 100000 & 1.001 & 0.0159 & 0.1261 & 0.0046 & 0/1000 \\
				0.6 & 300000 & 1.006 & 0.0085 & 0.0922 & 0.0024 & 0/1000\\
				\hline
			\end{tabular}
		\end{center}
		\caption{Simulation results for fixed $\alpha$ and different values of $n $.}	
		
		\begin{center}
			\begin{tabular}{@{}llrrrrc@{}}
				\multicolumn{1}{c}{$\alpha$} & \multicolumn{1}{c}{$n$} & \multicolumn{1}{c}{$\widehat{\rho}_{\MC}$}
				& \multicolumn{1}{c}{$\widehat{S}_{\MC}^2$} & \multicolumn{1}{c}{$\widehat{\sigma}_{\MC}$}
				& \multicolumn{1}{c}{$\widehat{\acc}$} & \multicolumn{1}{c}{$\mbox{rej}/N_{\MC}$} \\
				\hline
				0.3 & 100000 & 1.283 & 0.667 & 0.816 & 0.12634 & 0/2000 \\
				0.4 & 100000 & 1.123 & 0.388 & 0.622 & 0.04384 & 0/2000 \\
				0.5 & 100000 & 1.040 & 0.091 & 0.302 & 0.01430 & 0/2000 \\
				0.6 & 100000 & $1.019^{*}$ & $0.049^{*}$ & $0.223^{*}$ & $0.00459^{*}$ & 1/2000 \\
				0.7 & 100000 & $1.069^{*}$ & $0.142^{*}$ & $0.378^{*}$ & $0.00136^{*}$ & 13/2000 \\
				0.8 & 100000 & $1.014^{*}$ & $0.123^{*}$ & $0.351^{*}$ & $0.00046^{*}$ & 23/2000 \\
				\hline
			\end{tabular}
		\end{center}
		\caption{Simulation results for fixed $n$ and different values of $\alpha $. Estimation with an asterisk were performed removing the trajectories with $T^{(1)}_{n}(X)=0 $}.
		
		\begin{center}
			\begin{tabular}{@{}llrrrrc@{}}
				\multicolumn{1}{c}{$\alpha$} & \multicolumn{1}{c}{$n$} & \multicolumn{1}{c}{$\widehat{\rho}_{\MC}$}
				& \multicolumn{1}{c}{$\widehat{S}_{\MC}^2$} & \multicolumn{1}{c}{$\widehat{\sigma}_{\MC}$}
				& \multicolumn{1}{c}{$\widehat{\acc}$} & \multicolumn{1}{c}{$\mbox{rej}/N_{\MC}$} \\
				\hline
				0.55536 & 20000 & 1.048 & 0.0943 & 0.3071 & 0.019029 & 0/1000\\
				0.519033 & 40000 & 1.048 & 0.0952 & 0.3086 & 0.019005 & 0/1000\\
				0.477724 & 100000 & 1.049 & 0.1008 & 0.3176 & 0.019020 & 0/1000\\
				0.436109 & 300000 & 1.049 & 0.0967 & 0.3110 & 0.019013 & 0/1000\\
				\hline
			\end{tabular}
		\end{center}
		\caption{Simulation results for $(n,\alpha) $ satisfying $\log n = 5.5/\alpha $.}\label{table_n_alpha_relation}
	\end{table}
	
	\paragraph{Simulation results:} In this section we present Monte Carlo simulation results.
	The integer $N_{\MC}$ will be the Monte Carlo simulation size.
	For every $j\le  N_{\MC}$ we simulate the path of an approximation process
	$X^{j}_t $.
	To assess the quality of each Monte Carlo estimation we use the following metrics:
	\begin{enumerate}
		\item $\bigbraces{\widehat{\rho}_{\MC}, \widehat{S}_{\MC}^2, \widehat{\sigma}_{\MC} }$: Monte Carlo estimation, variance and standard deviation of the estimated stickiness parameters,
		\item $\widehat{\acc}$: average number of path-values observed by $g$, i.e
		\begin{equation}
			\widehat{\acc} = \frac{1}{N_{\MC}} \sum_{j=1}^{N_{\MC}} \frac{1}{n}\sum_{i=1}^{n}\indic{g(X^{j}_{(i-1)/n}) \ne 0},
		\end{equation}
		\item rej: percentage of trajectories where the local time estimation equals $0$, i.e
		\begin{equation}
			\rej =	
			\# \Bigcubraces{j \le N_{\MC}:
				\frac{n^{\alpha}}{n} \sum_{i=1}^{[nt]} g(n^{\alpha}\hfprocess{X}{j}{i-1} ) = 0
			}.
		\end{equation}
	\end{enumerate}
	Within each of the following tables we use the same simulated STMCA trajectories of the sticky Brownian motion of parameter $\rho=1 $:
	
	\subsection*{Observations}
	We observe the following:
	\begin{enumerate}
		\item For fixed $n$, the higher the $\alpha$, the more the trajectory of $X^{\rho} $ is inflated and the less things are observed through $g$. 
		Is is even possible that, for some paths of $X$, the statistic $T^{(1)} $, defined in~\eqref{eq_loct_statistic}, observes nothing.
		We omit those in the Monte Carlo estimation, and use the metric $\mbox{rej} $ to keep track of their number. 
		\item Having a finite set of path-wise observations of $X^{\rho}  $, one must find an $\alpha\in (0,1) $ large enough to trigger the asymptotic regime of \eqref{eq_functional_form} and low enough so we do not dump too many trajectories.
		\item In Table \ref{table_n_alpha_relation} we see that for a fixed $c>0 $, every $(n,\alpha)$ such that $\log n = c/\alpha $ yield the same Monte Carlo variance.
		This relation can be guessed from 
		\eqref{eq_functional_form} and \eqref{eq_local_time_alternative_definition},
		\item The convergence \eqref{eq_auxiliary_main_result_sde} seems to hold for  $\alpha \in [1/2,1)$, values not covered by Theorem~\ref{thm_main_result_stqSDE_intro}. We thus conjecture the following:
	\end{enumerate}
	
	\begin{conjecture}
		Theorem~\ref{thm_main_result_stqSDE_intro} holds for any $\alpha \in (0,1) $. 
	\end{conjecture}

	\begin{small}

	\end{small}
	
\end{document}